\font\fraktwelve=eufm10 at 12pt
\font\frakten=eufm10
\font\frakseven=eufm7
\def\eul{\fam=9}
\newcommand{\Gp}{{\eul p}}
\newcommand{\Gr}{{\eul r}}
\newcommand{\Gf}{{\eul f}}
\def\section{\@startsection {section}{1}{\z@}{-3.5ex plus
  -1ex minus -.2ex}{2.3ex plus .2ex}{\normalsize\bf}}
\def\subsection{\@startsection {subsection}{2}{\z@}{-3.25ex plus
  -1ex minus -.2ex}{1.5ex plus .2ex}{\normalsize\bf}}
\newtheorem{thm}{Theorem}[section]
\newtheorem{prop}[thm]{Proposition}
\newtheorem{lemma}[thm]{Lemma}
\newcommand{\blacksquare}{{\vrule height2ex width0.75em}}
\newenvironment{proof}{\begin{sc}\noindent Proof: \end{sc}}{
     \hbox to 2em{}\nobreak\hfill$\blacksquare$\par\medskip}
\newcommand{\Gal}{\mathop{\rm Gal}}
\newcommand{\Hom}{\mathop{\rm Hom}}
\begin{document}


\begin{center}
Class groups and Selmer groups \\
Journal of Number Theory, (56), 1996, 79 - 114
\end{center}
\vspace{.4in}

\begin{center}
Edward F. Schaefer, Santa Clara University\footnote{{\bf Acknowledgments:} I
thank Hendrik Lenstra for his support and guidance throughout the preparation
of this paper and for suggesting the proofs of lemma 5.1 and theorem 5.3.
I also thank Bas Edixhoven, Everett Howe,
and Joseph Wetherell for their helpful advice. I also thank Kestutis \v{C}esnavi\v{c}ius
for pointing out an error in the published article - it has been fixed in this version.}
\end{center}
\vspace{.4in}

\noindent
{\bf Abstract.}
It is often the case that a Selmer group of an abelian variety and a group
related to an ideal class group can both be naturally embedded into the same
cohomology group. One hopes to compute one from the other by finding how close
each is to their intersection. In this paper we compute the two groups and
their intersection explicitly in the local case and put together the local
information to get sharp upper bounds in the global case.
The techniques in this
paper can be used for arbitrary abelian varieties,
isogenies and number fields
assuming a frequently occurring condition.
Several examples are worked out for the Jacobians of elliptic and
hyperelliptic curves.

\section{Introduction}

It has been known for some time that a Selmer group
of an abelian variety
and a group related to an ideal class group can often be embedded
into the same cohomology group.
Ideally, the images of the two groups are almost the same and
we can compute one from the other.
In order to do this we need to
find how close the intersection is to each of the groups.
In the past, this has been done for special cases of
elliptic curves and with the Jacobians of Fermat curves.
Using techniques from Galois cohomology, we will see that we can do this
in much greater generality.

Let $A$ and $A'$ be abelian varieties of the same dimension $g$,
defined over $K$, an algebraic
number field, and let $\phi$ be a $K$-defined isogeny from
$A$ onto $A'$ (see Cornell and Silverman \cite{CS},
Lang \cite{La2} or Mumford \cite{Mu}
as general references on abelian varieties and Silverman \cite{Si}
for elliptic
curves).
The field $L$, which is the minimal field of definition of the points
in $A[\phi ]$, the kernel of $\phi$, is Galois over $K$.
If $H^{1}(\Gal (L/K),A[\phi ])$ is trivial, then the Selmer group
$S^{\phi}(K,A)$
can be embedded into the group $\Hom_{G(L/K)}(\Gal (\overline{L}/L),A[\phi ])$
(see section~\ref{clgpselgp} for a description of these objects).
Consider the subgroup of the homomorphism group of homomorphisms that
factor through the Galois group of a totally unramified extension of $L$;
call this group $C^{\phi}(K,A)$. From
class field theory, this group is related to the
ideal class group of $L$.
Denote by $I^{\phi}(K,A)$ the intersection of the group
$C^{\phi}(K,A)$ and the Selmer group. We would like to compute
$S^{\phi}(K,A)/I^{\phi}(K,A)$ and $C^{\phi}(K,A)/I^{\phi}(K,A)$ as accurately
as possible so as to relate the groups $S^{\phi}(K,A)$ and $C^{\phi}(K,A)$.
We do this with local computations.

Let $\Gp$ be a prime of $K$ and $K_{\Gp}$ be the completion of $K$ at
that prime.
Denote by $S^{\phi}(K_{\Gp},A)$ the group $A'(K_{\Gp})/\phi A(K_{\Gp})$.
We shall assume that $H^{1}(G,A[\phi ])$ is trivial for all
$G\subseteq \Gal(L/K)$.
This condition occurs frequently and we will see some examples
in sections 5.
Under this assumption we can embed $S^{\phi}(K_{\Gp},A)$ into the
group
$\Hom_{G(LK_{\Gp}/K_{\Gp})}(\Gal (\overline{LK_{\Gp}}/LK_{\Gp}),A[\phi ])$.
Let $C^{\phi}(K_{\Gp},A)$ be the subgroup of the
homomorphism group of homomorphisms
that factor through the
Galois group of an unramified extension of $LK_{\Gp}$.
Denote by
$I^{\phi}(K_{\Gp},A)$ the intersection of the groups
$C^{\phi}(K_{\Gp},A)$ and $S^{\phi}(K_{\Gp},A)$.

Under the assumptions that
$H^{1}(G,A[\phi ])=H^2(G,A[\phi])=0$ for all
$G\subseteq \Gal (L/K)$;
we have the following injections of groups
\[
S^{\phi}(K,A)/I^{\phi}(K,A)\hookrightarrow
\prod_{\Gp}S^{\phi}(K_{\Gp},A)/I^{\phi}(K_{\Gp},A)\]
\[
C^{\phi}(K,A)/I^{\phi}(K,A)\hookrightarrow
\prod_{\Gp}C^{\phi}(K_{\Gp},A)/I^{\phi}(K_{\Gp},A)\]
where $\Gp$ ranges over the primes of $K$. We will show in
section~\ref{glob comp} that
for each finite prime $\Gp$ of $K$ that does not divide the conductor of
$A$ or the degree of $\phi$,
the groups $S^{\phi}(K_{\Gp},A)$, $I^{\phi}(K_{\Gp},A)$,
$C^{\phi}(K_{\Gp},A)$ and $A(K_{\Gp})[\phi ]$
are isomorphic.

We will also see along the way that the orders of the groups
$S^{\phi}(K_{\Gp},A)$, $C^{\phi}(K_{\Gp},A)$,
and $I^{\phi}(K_{\Gp},A)$ can often be easily computed.
Computing the sizes of $S^{\phi}(K_{\Gp},A)$ and $C^{\phi}(K_{\Gp},A)$
is standard. The substantial contribution here is the computation
of $I^{\phi}(K_{\Gp},A)$ where $\Gp$ is a finite prime that divides
the conductor of $A$.
This is the computation that has caused others trouble in the past.
For elliptic
curves, these computations are greatly facilitated by the algorithm of
Tate \cite{Ta}.
These injections then give us upper bounds on the index of the
group $I^{\phi}(K,A)$ in the Selmer group and the group related
to the ideal class group. We are most interested
when the intersection is close to each of the
groups,
because then
we can often compute one group from the other or at least bound the size of
one given the size of the other.

Selmer groups hold key information about the group of rational points of an
abelian variety over an algebraic number field, known as the Mordell-Weil
group.
Since neither Mordell-Weil groups nor class groups are well understood
currently, any connection between them is helpful.
On the lighter side,
part of the lore of number theory is that
high-rank elliptic curves can produce record-breaking class groups.
The practicalities of this were first worked out by Mestre \cite{Me-1}.
Several papers have
appeared recently where techniques were presented for computing one of these
groups from the other.
Eisenbeis, Frey and Ommerborn \cite{Fr}
studied elliptic curves of the form $Y^{2}=X^{3}+k$ over {\bf Q} with
the multiplication by 2 map (from now on the 2-map) and the 2-rank of
the class groups of pure cubic fields.
The same curves have a rational 3-isogeny, and Satg\'{e} \cite{Sa}
studied these
and the 3-rank of the class groups of quadratic fields.
Washington \cite{Wa} studied the 2-map for curves of the
form $Y^{2}=X^{3}+mX^{2}-(m+3)X+1$ over ${\bf Q}$ and the 2-rank of the
class groups of the simplest cubic
fields (see Cohn \cite{Co} or Shanks \cite{Sh}).
Brumer and Kramer \cite{BK} produced techniques for computing the connection
in a more general domain.  They studied the 2-map and
cubic extensions for elliptic curves,
defined over number fields that are unramified over ${\bf Q}$ at 2,
at primes of good or multiplicative reduction.
McCallum \cite{Mc} has studied the Jacobians of the $p$th Fermat
curves and quotients of Fermat curves
using $p$-isogenies and the $p$-part
of the class group of ${\bf Q}(\zeta_{p})$.

In the following we produce techniques that work in far greater
generality. In section~\ref{clgpselgp}
we give a
global description of the Selmer group and the group related to an
ideal class group. In section~\ref{localcomp}
we present results that are useful for
computing
these groups and their
intersection over local fields.
In section~\ref{glob comp} we put these local results together
to prove a theorem giving upper bounds for the index of the
global intersection in the Selmer group and in the group related to an
ideal class group.
In section~\ref{2-map} we closely analyze the
2-map. We will first give criteria for the cohomological triviality
of $J[2]$ as a $\Gal(L/K)$-module where $J$ is the Jacobian of a
hyperelliptic curve. When $J[2]$ is cohomologically trivial we can
relate $C^{2}(K,J)$ to the 2-parts of the class groups of a collection
of subfields of $L$.
We finish this section by presenting
three examples. In the first example we show how to use
a curve of Mestre's to produce
a non-cyclic cubic extension of the rationals whose class group has 2-rank at
least 13. Then we look at the Jacobian of a hyperelliptic curve
whose 2-torsion is defined over a simplest quintic field and whose
Mordell-Weil rank over the rationals is 6 or 7. Lastly, we
see that the group related to the ideal class group is not always
contained in the Selmer group as it seems to be in all published
examples.

\section{Selmer groups and class groups}
\label{clgpselgp}
Let $K$ be a number field; this shall always mean that
$K$ is a finite extension of ${\bf Q}$.
Let $A$ and $A'$ be abelian varieties defined over $K$
of the same dimension $g$ and let $\phi$
be a $K$-defined isogeny of $A$ onto $A'$.
Let $\Gal (\overline{K}/K)$ denote the absolute Galois group of $K$.
By taking $\Gal (\overline{K}/K)$-invariants of the groups in
the short exact sequence
\[0 \rightarrow A[\phi ] \rightarrow
A(\overline{K}) \stackrel{\phi}{\rightarrow}
A'(\overline{K})
\rightarrow 0\]
we obtain the following long exact sequence.
The symbol
$[\phi ]$ after a group denotes
its subgroup sent to 0 by $\phi$, and the expression $H^{1}(K,M)$ denotes
$H^{1}(\Gal (\overline{K}/K),M)$ for some $\Gal (\overline{K}/K)$-module $M$.
\[
0\rightarrow A(K)[\phi ]\rightarrow A(K)\stackrel{\phi}{\rightarrow}
A'(K)\rightarrow H^{1}(K,A[\phi ])\rightarrow H^{1}(K,A(\overline{K}))
\stackrel{\phi}{\rightarrow}\ldots\]
This gives us the short exact sequence
\[
0\rightarrow A'(K)/\phi A(K)\rightarrow H^{1}(K,
A[\phi ])\rightarrow H^{1}(K,A(\overline{K}))[\phi ]\rightarrow 0.
\]
See Atiyah and Wall \cite{AW}
as a reference on group cohomology.

Let $\Gp$ be a prime of $K$, finite or infinite, and
$K_{\Gp}$ be the completion of $K$ at the prime $\Gp$.
For each prime $\Gp$
of $K$ we obtain similar
short exact sequences
\[
0 \rightarrow A[\phi ] \rightarrow
A(\overline{K}_{\Gp}) \stackrel{\phi}{\rightarrow}
A'(\overline{K}_{\Gp})
\rightarrow 0\]
and
\[
0\rightarrow A'(K_{\Gp})/\phi A(K_{\Gp})\rightarrow H^{1}(K_{\Gp},
A[\phi ])\rightarrow H^{1}(K_{\Gp},A(\overline{K_{\Gp}}))[\phi ]\rightarrow 0.
\]

Since the group $\Gal (\overline{K_{\Gp}}/K_{\Gp})$ is isomorphic to a
decomposition group for $\Gp$ in $\Gal (\overline{K}/K)$, we can
restrict cocycles in $\Gal (\overline{K}/K)$ to cocycles in
$\Gal (\overline{K_{\Gp}}/K_{\Gp})$. This induces the restriction map
\[
H^{1}(K,A[\phi ])\rightarrow H^{1}(K_{\Gp},A[\phi ]).\]

We present the following commutative diagram in order to define the
$\phi$-Selmer group for $A$ over $K$
\[
\begin{array}{rcccccccl}
0&\rightarrow&A'(K)/\phi A(K)&\rightarrow&
H^{1}(K,A[\phi ])&\rightarrow&
H^{1}(K,A(\overline{K}))[\phi ]&\rightarrow&0 \\
&&\downarrow& &\downarrow&\searrow \beta&\downarrow&&\\
0&\rightarrow&\prod\limits_{\Gp}A'(K_{\Gp})/\phi A(K_{\Gp})&
\rightarrow&
\prod\limits_{\Gp}H^{1}(K_{\Gp},A[\phi ])&
\rightarrow &\prod\limits_{\Gp}
H^{1}(K_{\Gp},A(\overline{K}_{\Gp}))[\phi ]&
\rightarrow&0
\end{array}
 \]
where $\Gp$ ranges over the primes of $K$.
The $\phi$-Selmer group for $A$ over $K$ is the kernel of $\beta$ and
is denoted $S^{\phi}(K,A)$. The Selmer group clearly contains
$A'(K)/\phi A(K)$.

The group of rational points $A(K)$,
is often called the Mordell-Weil group; it is a finitely generated
abelian group. Since the torsion of an abelian variety is computeable,
one could find the free ${\bf Z}$-rank of $A(K)$ if one
could compute the group $A(K)/nA(K)$ for some integer $n$ greater than 1.
So one often takes $\phi$ to be a multiplication by $n$ map.
If $\phi$ is a map from $A$ to $A'$ and $\phi '$ is a map from $A'$ to $A$
with the property that $\phi '\circ\phi =[n]$ then
the following is an exact sequence of groups
\[ 0\rightarrow \frac{A'(K)[\phi ']}{\phi (A(K)[n])}
\rightarrow \frac{A'(K)}{\phi A(K)}
\stackrel{\phi '}{\rightarrow}\frac{A(K)}{nA(K)}
\rightarrow \frac{A(K)}{\phi 'A'(K)}\rightarrow 0.\]
If one could compute
$A'(K)/\phi A(K)$
and $A(K)/\phi 'A'(K)$,
then one could compute \linebreak
$A(K)/nA(K)$.
There is no known algorithm to effectively compute the group
$A'(K)/\phi A(K)$ however.
A Selmer group is the closest known approximation to $A'(K)/\phi A(K)$
that is effectively computable, which
is why they were originally of interest.

The field
$L$, which is the minimal field of definition of the points in $A[\phi ]$,
is Galois over
$K$.
Restricting cocycles in $H^{1}(K,A[\phi ])$
to the subgroup $\Gal (\overline{L}/L)$ induces the
following exact sequence of groups, called an inflation-restriction
sequence
\[ 0\rightarrow H^{1}(\Gal (L/K),A[\phi ])
\rightarrow H^{1}(K,A[\phi ])\rightarrow
\Hom\nolimits_{G(L/K)}(\Gal (\overline{L}/L),A[\phi ])\]
where $\Hom_{G(L/K)}$ denotes the $\Gal (L/K)$-invariant homomorphisms.
Assuming that\linebreak
$H^{1}(\Gal (L/K),A[\phi ])$ is trivial, we
have
\[
H^{1}(K,A[\phi ])\hookrightarrow
\Hom\nolimits_{G(L/K)}(\Gal (\overline{L}/L),A[\phi ]).\]
We also assume that $H^{1}(G,A[\phi ])$
and $H^2(G,A[\phi])$ are trivial for all
groups $G$ contained in $\Gal (L/K)$, so
for each prime $\Gp$ of $K$ we also have
\[
H^{1}(K_{\Gp},A[\phi ])\hookrightarrow
\Hom\nolimits_{G(LK_{\Gp}/K_{\Gp})}(\Gal
(\overline{LK_{\Gp}}/LK_{\Gp}),A[\phi ]).\]

Let $m$ be the exponent of $A[\phi ]$ and
let ${\rm Cl}(L)$ denote the ideal class group of $L$. From
class field theory, the dual of
the group ${\rm Cl}(L)/{\rm Cl}(L)^{m}$ is naturally
isomorphic to the group\linebreak
$\Hom (\Gal (H(L)/L),{\bf Z}/m{\bf Z})$ where $H(L)$ is the maximal unramified
abelian extension of $L$, its Hilbert class field.
The group $C^{\phi}(K,A)$ is the subgroup of
$\Hom_{G(L/K)}(\Gal (\overline{L}/L),A[\phi ])$
of homomorphisms that factor through the Galois group
of a totally unramified extension of $L$.
This subgroup is isomorphic to the group
$\Hom_{G(L/K)}(\Gal (H(L)/L),A[\phi ])$
and so it is related to
the group ${\rm Cl}(L)/{\rm Cl}(L)^{m}$.
We will describe this relation for the 2-map and the Jacobians of
hyperelliptic curves in theorem~\ref{kernorm}.
We will call any homomorphism that
factors through the Galois group of a totally unramified extension an
unramified homomorphism.

The group $S^{\phi}(K,A)$
is the group of all elements of $\Hom_{G(L/K)}(\Gal (\overline{L}/L),A[\phi ])$
that map to the image of $A'(K_{\Gp})/\phi A(K_{\Gp})$ for all primes $\Gp$
of $K$.
The group $C^{\phi}(K,A)$
is the group
of all elements that map to unramified homomorphisms in
$\Hom_{G(LK_{\Gp}/K_{\Gp})}(\Gal (\overline{LK_{\Gp}}/LK_{\Gp}),A[\phi ])$
for all primes
$\Gp$ of $K$ since $L$ is normal over $K$.
These groups are the same locally for almost all primes, as we will
see in the next section.

\section{Local computations}
\label{localcomp}

In this section we describe
the group $A'(K_{\Gp})/\phi
A(K_{\Gp})$, the group of unramified homomorphisms in
$\Hom_{G(LK_{\Gp}/K_{\Gp})}(\Gal (\overline{LK_{\Gp}}/LK_{\Gp}),A[\phi ])$
and the subgroup of $A'(K_{\Gp})/\phi A(K_{\Gp})$ that maps to unramified
homomorphisms. If $H^{1}(G,A[\phi])=0$ for all $G\subseteq \Gal (L/K)$ then
these are the groups
$S^{\phi}(K_{\Gp},A)$, $C^{\phi}(K_{\Gp},A)$ and $I^{\phi}(K_{\Gp},A)$.
However, we will lift
the restriction that $H^{1}(G,A[\phi])=0$ for all $G\subseteq \Gal (L/K)$
until we discuss infinite primes.
Since all computations will be local, and
subscripts are annoying, we will omit them.
We deal with finite primes first.

Let $A$ and $A'$ be abelian varieties of the same dimension $g$ defined over
$K$,
the completion of a number field at a finite prime with residue characteristic
$p$.
Let $\phi$ be an isogeny from $A$ onto $A'$ that is defined over $K$.
Again define $L$ to be the field $K(A[\phi])$.

{}From
the short exact sequence
\[
0\rightarrow A[\phi ]\rightarrow A \rightarrow
A'\rightarrow 0\]
we obtain the following injection of groups
by taking $\Gal (\overline{K}/K)$-invariants
\[ A'(K)/\phi (A(K)) \hookrightarrow
H^{1}(K,A[\phi ]).
\] From
the restriction map from cohomology there is a homomorphism
\[
H^{1}(K,A[\phi ])
\rightarrow
\Hom\nolimits_{G(L/K)}(\Gal (\overline{L}/L),A[\phi ]).\]
We are first interested in describing the group of unramified homomorphisms
in\linebreak
$\Hom\nolimits_{G(L/K)}(\Gal (\overline{L}/L),A[\phi ])$ and the subgroup
of $A'(K)/\phi
A(K)$ that maps to those unramified homomorphisms.

\begin{lemma}
\label{le:unr hom}
The subgroup of unramified homomorphisms in
$\Hom_{G(L/K)}(\Gal (\overline{L}/L),A[\phi ])$
is isomorphic to the group
$A(K)[\phi ]$.
\end{lemma}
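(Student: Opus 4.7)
The plan is to identify both sides explicitly by evaluating unramified homomorphisms at a Frobenius element. Write $L^{\rm nr}$ for the maximal unramified extension of $L$ inside $\overline{L}$. By definition, an unramified homomorphism factors through $\Gal(L^{\rm nr}/L)$, and local theory identifies this Galois group with $\Gal(\overline{\ell}/\ell)$, where $\ell$ is the residue field of $L$; this is the procyclic group $\hat{\bf Z}$ topologically generated by the Frobenius ${\rm Frob}_\ell$. Because $A[\phi]$ is a finite discrete group, a continuous homomorphism $\hat{\bf Z}\to A[\phi]$ is determined by, and may take an arbitrary value at, this generator. So the rule $f\mapsto f({\rm Frob}_\ell)$ identifies the abstract group of all unramified homomorphisms $\Gal(\overline{L}/L)\to A[\phi]$ with $A[\phi]$ itself.

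Next I would determine which values of $f({\rm Frob}_\ell)$ correspond to $\Gal(L/K)$-equivariant $f$. The standard action is $(\sigma f)(\tau)=\sigma(f(\tilde\sigma^{-1}\tau\tilde\sigma))$ for any lift $\tilde\sigma\in\Gal(\overline{K}/K)$ of $\sigma\in\Gal(L/K)$. Setting $\tau={\rm Frob}_\ell$, invariance of $f$ reduces to the statement $f({\rm Frob}_\ell)=\sigma^{-1}(f({\rm Frob}_\ell))$ for every $\sigma$, provided one first verifies that $\tilde\sigma^{-1}{\rm Frob}_\ell\tilde\sigma\equiv{\rm Frob}_\ell$ as elements of $\Gal(L^{\rm nr}/L)$.

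The main (mild) obstacle is exactly this conjugation identity. Since $L/K$ is Galois, so is $L^{\rm nr}/K$, hence $\tilde\sigma$ normalizes $\Gal(L^{\rm nr}/L)$. Under the natural projection $\Gal(L^{\rm nr}/K)\to\Gal(\overline{k}/k)$ (with $k$ the residue field of $K$), the image of ${\rm Frob}_\ell$ lies in the procyclic, hence abelian, group $\Gal(\overline{k}/k)=\hat{\bf Z}$, so it is fixed by every conjugation. Lifting back, this forces $\tilde\sigma^{-1}{\rm Frob}_\ell\tilde\sigma={\rm Frob}_\ell$ already inside $\Gal(L^{\rm nr}/L)$, as required.

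Finally, since $L=K(A[\phi])$, one has $A[\phi]^{\Gal(L/K)}=A[\phi]\cap A(K)=A(K)[\phi]$. Evaluation at ${\rm Frob}_\ell$ therefore produces the claimed isomorphism between the subgroup of $\Gal(L/K)$-invariant unramified homomorphisms and $A(K)[\phi]$.
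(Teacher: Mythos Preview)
Your proof is correct and follows the same approach as the paper: evaluate at Frobenius to identify $\Hom(\Gal(L^{\rm nr}/L),A[\phi])$ with $A[\phi]$, then take $\Gal(L/K)$-invariants. The paper simply asserts that evaluation at Frobenius respects the $\Gal(L/K)$-action, whereas you supply the verification via the conjugation identity $\tilde\sigma^{-1}{\rm Frob}_\ell\tilde\sigma={\rm Frob}_\ell$; this extra detail is sound and is exactly what underlies the paper's one-line claim.
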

\begin{proof}
Let $L^{\rm unr}$ be the maximal unramified extension of $L$ in the
given algebraic closure.
The group $\Hom (\Gal (L^{\rm unr}/L),A[\phi ])$ is isomorphic
to the group $A[\phi ]$ by the map $f\mapsto f({\rm Frob}_{L})$.
This map respects the $\Gal (L/K)$-action on each module.
Taking $\Gal (L/K)$-invariants of both
groups provides the result.
\end{proof}

Now let us consider the subgroup of $A'(K)/\phi A(K)$ that maps to unramified
homomorphisms. We still need not assume that $H^{1}(G,A[\phi])=0$ for
$G\subseteq \Gal (L/K)$.
The connecting homomorphism from cohomology sends
$P\in A'(K)$ to the class of cocycles in $H^{1}(K,A[\phi ])$
consisting of cocycles $\xi$ where
$\xi (\sigma )=\sigma Q-Q$ for $\sigma \in
\Gal (\overline{K}/K)$ and $Q\in A$ where $\phi Q=P$.
The point $P$ maps to an
unramified homomorphism if and only if its inverse images under the
map $\phi$ are all defined over an unramified extension of $L$.
Since $L=K(A[\phi ])$, if one is defined over an unramified extension of
$L$, then they all are.
Let $A(K)[\phi ]$ have exponent $m$. From the
previous lemma, it is clear that if $P$ maps to an
unramified homomorphism,
then these preimages will be defined over the unramified extension of $L$
of degree $m$.

Let $L'$ be the maximal unramified subextension of $L$ over $K$.
Let $M$ be
the unramified extension of $L'$ of degree $m$ and let $\tau$ generate
$\Gal (M/K)$. The field $ML$ is the unramified extension of $L$ of
degree $m$.

\begin{picture}(2000,1100)
\put(179,65){$K$}
\put(328,330){$L'$}
\put(476,610){$M$}
\put(76,502){$L$}
\put(200,778){$ML$}
\put(624,890){$K^{\rm unr}$}
\thicklines
\put(245,175){\line(3,5){72}}
\put(407,445){\line(3,5){72}}
\put(137,607){\line(3,5){72}}
\put(546,720){\line(3,5){72}}
\put(167,487){\line(5,-3){120}}
\put(329,757){\line(5,-3){120}}
\end{picture}

\begin{lemma}
\label{le:push ML to M}
Let the group $H^{1}(\Gal (L/L'),A[\phi ])$ be trivial.
Let $Q\in A(ML)$ and assume $\phi Q=P$, where $P\in A'(K)$.
There exists a $Q'\in A(M)$ such
that $\phi Q'=P$.
\end{lemma}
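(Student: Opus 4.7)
The plan is straightforward cohomological descent: I would package the obstruction to pushing $Q$ down from $A(ML)$ to $A(M)$ as a $1$-cocycle on $\Gal(ML/M)$, identify this Galois group with $\Gal(L/L')$, and then kill the cocycle with the hypothesis $H^{1}(\Gal(L/L'),A[\phi])=0$.

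The key preliminary fact is that $L$ and $M$ are linearly disjoint over $L'$: by the definition of $L'$ as the maximal unramified subextension of $L/K$, the extension $L/L'$ is totally ramified, whereas $M/L'$ is unramified, and totally ramified and unramified extensions of a local field are linearly disjoint. Consequently, restriction induces an isomorphism $\Gal(ML/M)\stackrel{\sim}{\longrightarrow}\Gal(L/L')$, and because $A[\phi]\subseteq A(L)$, the action of $\Gal(ML/M)$ on $A[\phi]$ factors through this isomorphism.

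Next, I would define $\xi\colon\Gal(ML/M)\rightarrow A[\phi]$ by $\xi(\sigma)=\sigma Q-Q$. Since $P\in A'(K)\subseteq A'(M)$ is fixed by $\sigma$, applying $\phi$ shows $\xi(\sigma)\in A[\phi]$, and the usual calculation verifies that $\xi$ is a $1$-cocycle. Via the identification of the previous paragraph, $\xi$ represents a class in $H^{1}(\Gal(L/L'),A[\phi])$, which is trivial by hypothesis. Hence there is a $T\in A[\phi]$ with $\xi(\sigma)=\sigma T-T$ for all $\sigma\in\Gal(ML/M)$.

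Finally I would set $Q':=Q-T$. A one-line check gives $\sigma Q'=\sigma Q-\sigma T=Q+(\sigma T-T)-\sigma T=Q-T=Q'$, so $Q'\in A(M)$, and $\phi Q'=\phi Q-\phi T=P$ since $T\in A[\phi]$. This completes the descent. The only real subtlety is the bookkeeping in the first paragraph, namely justifying that $\Gal(ML/M)\cong\Gal(L/L')$ compatibly with the $A[\phi]$-action so that the hypothesis can legitimately be applied; once this identification is in hand, the remainder is a formal coboundary argument.
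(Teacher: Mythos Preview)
Your proof is correct and is essentially the same argument as the paper's, just unpacked. The paper phrases the descent by noting that the vanishing of $H^{1}(\Gal(ML/M),A[\phi])$ forces $A'(M)/\phi A(M)\hookrightarrow A'(ML)/\phi A(ML)$ (via inflation--restriction and the Kummer embeddings), so $P\in\phi A(ML)$ implies $P\in\phi A(M)$; you instead write down the obstruction cocycle $\sigma\mapsto\sigma Q-Q$ and kill it by hand, which is precisely the diagram chase behind the paper's one-line injectivity statement.
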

\begin{proof}
Since $H^{1}(\Gal (L/L'),A[\phi ])$ is trivial, so is
$H^{1}(\Gal (ML/M),A[\phi ])$. So
$H^{1}(M,A[\phi])$
injects into $H^{1}(ML,A[\phi])$ and so $A'(M)/\phi A(M)$ injects
into $A'(ML)/\phi A(ML)$. So if $P$ is in $\phi A(ML)$ then $P$ is
in $\phi A(M)$.
\end{proof}
\begin{lemma}
\label{ker norm}
The elements of $A(M)[\phi ]$ are in the kernel of the norm from $M$ to $K$.
\end{lemma}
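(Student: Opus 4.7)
The plan is to first show that any $Q \in A(M)[\phi]$ is in fact defined over the unramified subfield $L'$, and then to split the norm $N_{M/K}$ through $L'$ so as to reduce the computation to the exponent of $A(K)[\phi]$. The substantive step is the first one; the rest is a formal manipulation of Galois sums.

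For the first step I would argue as follows. Since $L = K(A[\phi])$ by definition, $A[\phi] \subseteq A(L)$, so any $Q \in A(M)[\phi] = A[\phi] \cap A(M)$ lies in $A(L) \cap A(M) = A(L \cap M)$. The extension $M/L'$ is unramified while $L/L'$ is totally ramified, so $L \cap M = L'$ and hence $Q \in A(L')[\phi]$. This is the only place where the particular geometry of the tower — $L'$ as the maximal unramified subextension of $L$, and $M$ as an unramified extension of $L'$ of degree $m$ — enters non-trivially, and it is what I expect to be the main obstacle to recognize.

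With $Q \in A(L')$ in hand, I would exploit the coset decomposition of $\Gal(M/K) = \langle \tau \rangle$ modulo its subgroup $\Gal(M/L') = \langle \tau^f \rangle$ of order $m$, where $f = [L':K]$. Since $\tau^f$ fixes $L'$ pointwise, it acts trivially on $Q$, and summing over the cosets gives
\[ N_{M/K}(Q) = m \cdot N_{L'/K}(Q). \]
Because $\phi$ is $K$-defined it commutes with every element of $\Gal(M/K)$, so $\phi(N_{L'/K}(Q)) = N_{L'/K}(\phi Q) = 0$, placing $N_{L'/K}(Q)$ in $A(K)[\phi]$. The hypothesis that $m$ is the exponent of $A(K)[\phi]$ then kills $m \cdot N_{L'/K}(Q)$, whence $N_{M/K}(Q) = 0$.
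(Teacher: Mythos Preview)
Your argument is correct and follows the same route as the paper: reduce $A(M)[\phi]$ to $A(L')[\phi]$, factor $N_{M/K}$ as $m\cdot N_{L'/K}$ on these elements, land in $A(K)[\phi]$, and kill by the exponent $m$. The paper states each of these steps tersely (e.g.\ it simply asserts $T\in A(M)[\phi]\Rightarrow T\in A(L')[\phi]$ and that $N_{M/K}=m\cdot N_{L'/K}$ there), whereas you have supplied the justifications via $L\cap M=L'$ and the coset decomposition; there is no substantive difference in strategy.
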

\begin{proof}
If
$T \in A(M)[\phi ]$
then $T \in A(L')[\phi ]$. Since $N_{L'/K}(A(L')[\phi ])
\subseteq A(K)[\phi ]$,
it is clear that $N_{M/K}=m\cdot N_{L'/K}$ on $A(M)[\phi ]$.
Since $m$ is the exponent of $A(K)[\phi ]$,
all of $A(M)[\phi ]$ is in the kernel of $N_{M/K}$.
\end{proof}
\begin{thm}
\label{thm:mainthm}
Let $K$ be the completion of a number field at a finite prime and let
$\phi$ be a $K$-defined isogeny from $A$ to $A'$, which are
$K$-defined abelian varieties.
Let $L=K(A[\phi ])$ and let $L'$ be the maximal unramified
extension of $K$ contained in $L$. Let $m$ be the exponent of
$A(K)[\phi ]$, and $M$ be the unramified extension of $L'$ of degree $m$
and $\tau$ generate $\Gal (M/K)$.
Assume that the group $H^{1}(\Gal (L/L'),A[\phi ])$ is trivial.
The subgroup of $A'(K)/\phi A(K)$ that maps to unramified
homomorphisms in $\Hom_{G(L/K)}(\Gal (\overline{L}/L),A[\phi ])$
is isomorphic to the following group
\[ \frac{A(M)[\phi ]\cap (\tau -1)A(M)}
{(\tau -1)(A(M)[\phi ])}\]
by the map $P\mapsto\tau Q-Q$ where $P\in A'(K)$, $Q\in A(M)$ and
$\phi Q=P$.
\end{thm}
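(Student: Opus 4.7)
The approach is to make the isomorphism explicit via the connecting homomorphism $A'(K)/\phi A(K) \hookrightarrow H^{1}(K, A[\phi])$, which sends $P$ to the class of the cocycle $\sigma \mapsto \sigma Q - Q$ for any choice of $Q$ with $\phi Q = P$. Composing with restriction to $\Gal(\overline{L}/L)$ yields a $\Gal(L/K)$-equivariant homomorphism that is unramified precisely when the preimages of $P$ can be chosen in an unramified extension of $L$. The theorem refines this observation by pinning the preimages down inside the specific unramified extension $M$ of $L'$ and tracking the action of the generator $\tau$.

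I would begin by constructing the candidate map. Given $P \in A'(K)$ whose image is an unramified homomorphism, the corresponding homomorphism on $\Gal(L^{\rm unr}/L)$ takes values in $A(K)[\phi]$ by Lemma~\ref{le:unr hom}, so its image has exponent dividing $m$; hence any preimage of $P$ under $\phi$ lies in $A(ML)$, and Lemma~\ref{le:push ML to M} lets us descend to a preimage $Q \in A(M)$. Since $\phi(\tau Q - Q) = \tau P - P = 0$, the element $\tau Q - Q$ lies in $A(M)[\phi] \cap (\tau-1)A(M)$. Well-definedness modulo $(\tau-1)A(M)[\phi]$ is then automatic: two choices of $Q$ differ by some $T \in A(M)[\phi]$, and their images differ by $(\tau-1)T$; and if $P \in \phi A(K)$, choosing $Q \in A(K) \subseteq A(M)$ yields $\tau Q - Q = 0$. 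So the assignment descends to a homomorphism on $A'(K)/\phi A(K)$.

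Next I would verify that this homomorphism is an isomorphism onto the stated quotient. For injectivity, suppose $\tau Q - Q = (\tau - 1)T$ with $T \in A(M)[\phi]$; then $\tau(Q-T) = Q-T$, so $Q - T \in A(M)^{\tau} = A(K)$, and $\phi(Q-T) = P$, showing $P = 0$ in $A'(K)/\phi A(K)$. For surjectivity, given $R \in A(M)[\phi] \cap (\tau-1)A(M)$, write $R = \tau Q - Q$ for some $Q \in A(M)$ and set $P := \phi Q$; since $\phi R = 0$ we get $\tau P = P$, so $P \in A'(K)$, and because $Q \in A(M) \subseteq A(ML)$ with $ML/L$ unramified, the cocycle $\sigma \mapsto \sigma Q - Q$ restricts to an unramified homomorphism on $\Gal(\overline{L}/L)$. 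By construction the class of $P$ maps to that of $R$.

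The main obstacle I expect is the descent from $A(ML)$ to $A(M)$: a priori one only knows that preimages of $P$ live in the degree-$m$ unramified extension $ML$ of $L$, but the entire formulation rests on being able to choose a preimage inside the unramified extension $M$ of $K$ itself so that $\tau$ acts on it. This is exactly the content of Lemma~\ref{le:push ML to M}, whose proof relies on the hypothesis $H^{1}(\Gal(L/L'), A[\phi]) = 0$. Once that descent is in hand, the remaining steps are formal manipulations with the $\Gal(M/K)$-action.
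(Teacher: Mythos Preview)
Your argument is correct and hinges on the same key step as the paper---using Lemma~\ref{le:push ML to M} to descend the preimage from $A(ML)$ to $A(M)$---but the packaging differs. The paper identifies the subgroup in question as $(\phi A(M)\cap A'(K))/\phi A(K)$ and then reads off the isomorphism cohomologically: taking $\Gal(M/K)$-invariants of $0\to A(M)[\phi]\to A(M)\to \phi A(M)\to 0$ embeds this group as the kernel of $\gamma\colon H^{1}(\Gal(M/K),A(M)[\phi])\to H^{1}(\Gal(M/K),A(M))$, and cyclic-group cohomology together with Lemma~\ref{ker norm} (that all of $A(M)[\phi]$ lies in the kernel of the norm) identifies that kernel with the stated quotient. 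You instead verify injectivity and surjectivity of the map $P\mapsto \tau Q-Q$ by hand, which is a direct unwinding of the same connecting homomorphism; this buys you a self-contained argument that never invokes Lemma~\ref{ker norm} or the explicit $H^{1}$ description, at the cost of not exhibiting the cohomological interpretation that the paper uses elsewhere (e.g.\ in Lemma~\ref{le:EmodE0}).
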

\begin{proof}
We chose $M$ to have degree $m$ over $L'$ so that $ML$ is the unramified
extension of $L$ of degree $m$.
A point $P$ in $A'(K)$ maps to the homomorphism $\sigma\mapsto\sigma
(\phi^{-1}(P))-\phi^{-1}(P)$ in $\Hom (\Gal (\overline{L}/L),A[\phi])$.
A point $P$ maps to an unramified homomorphism if the preimages, $\phi^{-1}
(P)$, are defined over $ML$.
From
lemma~\ref{le:push ML to M},
such a point has
a preimage defined over $M$.
Therefore the subgroup of $A'(K)/\phi A(K)$ that maps to unramified
homomorphisms is $(\phi A(M)\cap A'(K))/\phi A(K)$.
From the short exact sequence
\[
0\rightarrow A(M)[\phi]\rightarrow A(M)\stackrel{\phi}{\rightarrow}
\phi A(M)\rightarrow 0\]
we obtain the following exact sequence by taking
$\Gal (M/K)$-invariants
\[
0\rightarrow (\phi A(M)\cap A'(K))/
\phi A(K)\rightarrow H^{1}(\Gal (M/K),A(M)[\phi ])
\stackrel{\gamma}{\rightarrow} H^{1}(\Gal (M/K),A(M)).\]
Since $\Gal (M/K)$ is cyclic, generated by $\tau$, we have
\[
H^{1}(\Gal (M/K),A(M)[\phi ])\cong \frac{{\rm ker}\;
N_{\tau}:A(M)[\phi ]\rightarrow
A(M)[\phi ]}{(\tau -1)A(M)[\phi ]}.\] From
lemma~\ref{ker norm}, all of $A(M)[\phi ]$ is in the kernel of the norm.
The subgroup of $A'(K)/\phi A(K)$ that maps to unramified homomorphisms
is isomorphic to the kernel of $\gamma$ by the map
$P\mapsto\tau Q-Q$ where $P\in \phi A(M)\cap A'(K)$, $Q\in A(M)$ and
$\phi Q=P$.
The kernel of $\gamma$ is the group of elements that map to
coboundaries in $H^{1}(\Gal (M/K),A(M))$. So the kernel of $\gamma$ is
isomorphic to
\[ \frac{A(M)[\phi]\cap (\tau -1)A(M)}
{(\tau -1)(A(M)[\phi ])}.\]
\end{proof}

Let us refer to this group as the intersection quotient.
Because of the proof, the theorem holds when replacing $M$ by
any unramified extension of $M$.
Notice that if all of
$A(M)[\phi ]$ is contained in
$(\tau -1)(A(M))$, then
the intersection quotient
has the same order as $A(K)[\phi ]$
and so has the same order as the group of unramified homomorphisms.

Let us discuss the computation of
the intersection quotient. The group $A(M)[\phi ]$ and the
action of $\tau$ on it should be easy to find.
So we need to find which elements of $A(M)[\phi ]$ are in
$(\tau -1)(A(M))$.
The following
two lemmas brings this problem from the infinite to the finite as the
group $A(M)/A_{0}(M)$ is finite.
Let $NA$ be the N\'{e}ron model of $A$ over $\cal
O$, the ring of integers in $M$. Define $NA^{0}$ to be the open
subgroup scheme of $NA$ whose generic fiber is isomorphic to $A$ over
$M$ and whose special fiber is the identity component of the
closed fiber of $NA$. The group $NA^{0}({\cal O})$ is isomorphic to a
subgroup of $A(M)$ which we are denoting by $A_{0}(M)$;

\begin{lemma}
\label{le:ns coh tri}
The groups $H^{i}(\Gal (M/K),A_{0}(M))$ are trivial for all $i$.
\end{lemma}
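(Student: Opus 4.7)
The plan is to analyze $A_0(M)$ via the standard exact sequence
\[ 0 \to A_1(M) \to A_0(M) \to \overline{A}^0(\kappa_M) \to 0, \]
where $\kappa_M$ is the residue field of $M$, $\overline{A}^0$ is the identity component of the special fiber of $NA$, and $A_1(M)$ is the kernel of reduction (i.e.\ the formal group of $\overline{A}^0$ evaluated on the maximal ideal of $\mathcal{O}_M$). I would establish cohomological triviality of the two outer terms separately and stitch them together with the long exact sequence. The crucial structural fact is that, since $L'/K$ is the maximal unramified subextension of $L$ and $M/L'$ was defined to be unramified, $M/K$ is unramified and $\Gal(M/K)$ is cyclic, canonically isomorphic to $\Gal(\kappa_M/\kappa_K)$; hence its Tate cohomology is $2$-periodic and vanishing in a single positive degree together with a trivial Herbrand quotient will suffice.

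For the special fiber, $\overline{A}^0$ is a smooth connected commutative algebraic group over the finite field $\kappa_K$, so by Lang's theorem $H^1(\Gal(\overline{\kappa}_K/\kappa_K),\overline{A}^0(\overline{\kappa}_K))=0$, and inflation--restriction forces $H^1(\Gal(M/K),\overline{A}^0(\kappa_M))=0$. Since $\overline{A}^0(\kappa_M)$ is finite, its Herbrand quotient over the cyclic group $\Gal(M/K)$ equals $1$; combined with the vanishing of $H^1$ and $2$-periodicity, this yields cohomological triviality.

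For the formal group, unramifiedness of $M/K$ means that a uniformizer of $K$ remains a uniformizer of $M$, and $\kappa_M$ is a free $\kappa_K[\Gal(M/K)]$-module by the normal basis theorem for the finite field extension $\kappa_M/\kappa_K$. The maximal-ideal-adic filtration of $A_1(M)$ is $\Gal(M/K)$-stable, and the formal group law identifies each graded piece with a finite direct sum of $g$ copies of $\kappa_M$ as a Galois module, hence a free $\kappa_K[\Gal(M/K)]$-module and cohomologically trivial. Induction via the long exact sequence then gives cohomological triviality of each finite quotient $A_1(M)/A_n(M)$, and one passes to the inverse limit using that the transition maps are surjective and each quotient is finite, so the higher inverse-limit obstructions vanish. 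Combining the two triviality statements through the long exact sequence of the displayed reduction extension completes the proof.

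The main obstacle I anticipate is the inverse-limit step for the formal group: one must justify that cohomology commutes with the limit over the ideal-adic filtration, which rests on the Mittag--Leffler property supplied by surjectivity of the transition maps and finiteness of the quotients. Everything else is structural and reduces to standard ingredients (Lang's theorem, the normal basis theorem, and the cyclic cohomology machinery) once the unramifiedness of $M/K$ has been brought to bear.
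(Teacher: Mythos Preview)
Your argument is correct, but it takes a different route from the paper. The paper does not analyze the filtration of $A_0(M)$ at all. Instead, it cites Milne's result that $H^{i}(\Gal(W^{\rm unr}/W),A_{0}(W^{\rm unr}))=0$ for every $i\ge 1$ and every local field $W$, applies it to both $W=K$ and $W=M$ (using that the N\'{e}ron model is stable under unramified base change, so that $A_0(K^{\rm unr})^{\Gal(K^{\rm unr}/M)}=A_0(M)$), and then invokes the extended inflation--restriction sequence to conclude that $H^{i}(\Gal(M/K),A_{0}(M))$ injects into $H^{i}(\Gal(K^{\rm unr}/K),A_{0}(K^{\rm unr}))=0$.

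Your approach is essentially a direct, self-contained proof of the special case of Milne's theorem that is actually needed: you split $A_0(M)$ into the finite piece $\overline{A}^{0}(\kappa_M)$ (handled by Lang's theorem plus the Herbrand quotient) and the formal-group piece $A_1(M)$ (handled via the normal basis theorem on each graded layer, then an inverse limit). This buys independence from the external reference and makes the mechanism transparent, at the cost of length and the mildly delicate limit step. The paper's approach is much shorter once one is willing to quote Milne, and it also sidesteps the inverse-limit issue entirely by working over $K^{\rm unr}$ and descending.
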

\begin{proof}
In \cite{Mi},
Milne shows that if $W$ is the completion of a number field at a finite
prime, then
$H^{i}(\Gal (W^{\rm unr}/W),A_{0}
(W^{\rm unr}))$ is
trivial for all $i \geq 1$ where
$W^{\rm unr}$ is the maximal unramified
extension of $W$ in a given algebriac closure of $W$.
Since the N\'{e}ron model is stable under unramified base extensions
(see \cite{Ar}, p.\ 214), we know that since $M$ is an unramified
extension of $K$, that the set of
$\Gal (K^{\rm unr}/M)$-invariants of
$A_{0}(K^{\rm unr})$ is $A_{0}(M)$. This is not
necessarily the case otherwise.
Since $M^{\rm unr}=K^{\rm unr}$, we have
$H^{i}(\Gal (K^{\rm unr}/
M),A_{0}(K^{\rm unr}))=0$ for all $i \geq 1$. From
\cite{AW}, proposition 5, it follows that the sequence\[
0\rightarrow H^{i}(\Gal (M/K),A_{0}(M))
\stackrel{\rm inf}{\rightarrow}
H^{i}(\Gal (K^{\rm unr}/K),A_{0}(K^{\rm unr}))
\stackrel{\rm res}{\rightarrow}
H^{i}(\Gal (K^{\rm unr}/
M),A_{0}(K^{\rm unr}))\] is exact and we are done.
\end{proof}

\begin{lemma}
\label{le:EmodE0}
A point $T$ of $A(M)[\phi ]$
is in
$(\tau -1)(A(M))$ if and only if the image of $T$ in
$A(M)/A_{0}(M)$ is
in
$(\tau -1)(A(M)/A_{0}(M))$.
\end{lemma}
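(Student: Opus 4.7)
The forward direction is immediate: if $T = (\tau-1)S$ for some $S \in A(M)$, then reducing modulo $A_0(M)$ shows the image of $T$ lies in $(\tau-1)(A(M)/A_0(M))$.

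For the converse, my plan is to lift the hypothesis to $A(M)$ itself and then use the cohomological vanishing from Lemma~\ref{le:ns coh tri}. Specifically, suppose the image of $T$ in $A(M)/A_0(M)$ lies in $(\tau-1)(A(M)/A_0(M))$. Then there exists $R \in A(M)$ such that
\[ T' := T - (\tau-1)R \in A_0(M). \]
It suffices to show $T' \in (\tau-1)A(M)$, because then $T = T' + (\tau-1)R$ is also in $(\tau-1)A(M)$.

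The key point is that $T'$ lies in the kernel of the norm $N_\tau$ acting on $A_0(M)$. Indeed, $N_\tau(\tau-1)R = 0$ and, by Lemma~\ref{ker norm}, $N_{M/K}T = 0$; since $\Gal(M/K)$ is cyclic generated by $\tau$, we have $N_\tau T = 0$, and therefore $N_\tau T' = 0$. Now invoke Lemma~\ref{le:ns coh tri} in the form
\[ H^1(\Gal(M/K), A_0(M)) \;\cong\; \frac{\ker\bigl(N_\tau : A_0(M) \to A_0(M)\bigr)}{(\tau-1)A_0(M)} \;=\; 0, \]
which uses that $\Gal(M/K)$ is cyclic. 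This identifies $\ker N_\tau$ on $A_0(M)$ with $(\tau-1)A_0(M)$, so $T' \in (\tau-1)A_0(M) \subseteq (\tau-1)A(M)$, completing the argument.

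The only subtle point is verifying that $T'$ really lies in the kernel of $N_\tau$ on $A_0(M)$; this is where Lemma~\ref{ker norm} (applied to $T \in A(M)[\phi]$) is essential, since a priori an element of $A_0(M)$ need not have trivial norm. Once that is in hand, Lemma~\ref{le:ns coh tri} does the rest, and no further computation is required.
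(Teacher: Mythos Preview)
Your proof is correct and uses the same two ingredients as the paper---Lemma~\ref{ker norm} to place $T$ in the kernel of the norm, and Lemma~\ref{le:ns coh tri} to kill the obstruction in $A_{0}(M)$. The paper packages these via the long exact sequence of the pair $A_{0}(M)\subset A(M)$ to obtain $H^{1}(\Gal(M/K),A(M))\cong H^{1}(\Gal(M/K),A(M)/A_{0}(M))$ and then reads off the equivalence; your argument unwinds exactly the injectivity half of that isomorphism at the level of elements, which is all that is actually needed.
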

\begin{proof}
We have the following short exact sequence of $\Gal (M/K)$-modules
\[
0\rightarrow A_{0}(M)\rightarrow A(M)\rightarrow A(M)/A_{0}(M)\rightarrow 0.\]
By taking $\Gal (M/K)$-invariants, we see
from
lemma~\ref{le:ns coh tri} that\linebreak
$H^{i}(\Gal (M/K),A_{0}(M))$ is trivial
for all $i$. So we have
\[
H^{1}(\Gal (M/K),A(M))\cong H^{1}(\Gal (M/K),A(M)/A_{0}(M)).\]
Since $\Gal (M/K)$ is cyclic, each of these groups is isomorphic to the
kernel of the norm modulo the image of $(\tau - 1)$ on the appropriate
Galois-module.
From
lemma~\ref{ker norm}, the elements
of $A(M)[\phi ]$ are in the kernel of the norm from $M$ to $K$.
The result follows from the fact that the image of
$T$ is trivial in one group if and only if it is trivial in the other.
\end{proof}

We see that points in $A(M)[\phi ]$ (which equals $A(L')[\phi]$)
that are in $A_{0}(M)$ are
automatically in $(\tau -1)(A(M))$.  For points of $A(M)[\phi ]$ that
are not in $A_{0}(M)$ one must determine which have images that are in
$(\tau -1)(A(M)/A_{0}(M))$.

For elliptic curves,
the possible groups $E(M)/E_{0}(M)$ have been completely determined
by the classification of Kodaira \cite{Ko} and N\'{e}ron \cite{Ne}.
If we take for $E$ a minimal Weierstrass model, then
the group $E_{0}(M)$ is the subgroup of points of $E(M)$ with non-singular
reduction. In the following theorem we present an algorithm
for determining if a point $P$ in $E(M)[\phi]$ with
singular reduction
is in $(\tau - 1)(E(M))$. This is accomplished
in conjunction with
Tate's algorithm \cite{Ta}.
We must first determine whether or not $\tau$ acts non-trivially
on $E(M)/E_{0}(M)$.
If it does, then in most cases it is simply a matter of determining
whether or not $2|\# \Gal (L'/K)$ or
if the image of $P$ in $E(M)/E_{0}(M)\cong {\bf Z}/n{\bf Z}$ is even.

\begin{thm}
\label{thm:cases}
Let $E$ be defined by a minimal Weierstrass equation over $K$ and let
$\phi$ be a $K$-defined isogeny from $E$ onto $E'$. Let $L'=K(E[\phi ])\cap
K^{\rm unr}$, let $M$ be a finite
unramified extension of $L'$ and $\tau$ generate
$\Gal (M/K)$.
A point $P$ in $E(L')[\phi ]$
with singular reduction will be in $(\tau -1)(E(M))$ in exactly the
following cases. Otherwise it will not be in $(\tau -1)(E(M))$.
\begin{enumerate}
\item
$E$ has type $I_{\nu}$ reduction (multiplicative) with $\nu$ odd,
$\tau$ acts as $-1$ on $E(M)/E_{0}(M)$ and
$2|\#\Gal (L'/K)$.
\item
$E$ has type $I_{\nu}$ reduction with $\nu$ even,
$\tau$ acts as $-1$
on $E(M)/E_{0}(M)$
and the image of $P$ in
$E(M)/E_{0}(M)
\cong
{\bf Z}/\nu {\bf Z}$
is even.
\item
$E$ has type $IV$ or $IV^{\ast}$ reduction,
$\tau$ acts as $-1$
on $E(M)/E_{0}(M)$ and $2|\#\Gal (L'/K)$.
\item
$E$ has type $I_{\nu}^{\ast}$ reduction with $\nu$ odd, $\tau$ acts
as $-1$ on $E(M)/E_{0}(M)$ and the image of $P$ in $E(M)/E_{0}(M)
\cong {\bf Z}/4{\bf Z}$ is 2.
\item
$E$ has type $I_{\nu}^{\ast}$ reduction with $\nu$ even (including $\nu =0$),
and either the action of $\tau$ on $E(M)/E_{0}(M)$ has order 2 and
fixes $P$ modulo $E_{0}(M)$ or the action of $\tau$ on $E(M)/E_{0}(M)$
has order 3.
\end{enumerate}
\end{thm}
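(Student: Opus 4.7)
The starting point is Lemma~\ref{le:EmodE0}, which reduces the question to whether the image $\bar{P}$ of $P$ in the finite component group $G := E(M)/E_{0}(M)$ lies in $(\tau-1)G$. Since $G$ is determined by the reduction type via the Kodaira--N\'{e}ron classification, and since Tate's algorithm makes both $G$ and the $\tau$-action on it computable, the theorem reduces to a finite case analysis across the Kodaira types.

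First I would tabulate $G$: trivial for $II$ and $II^{\ast}$; ${\bf Z}/2{\bf Z}$ for $III$ and $III^{\ast}$; ${\bf Z}/3{\bf Z}$ for $IV$ and $IV^{\ast}$; ${\bf Z}/\nu{\bf Z}$ for $I_{\nu}$; ${\bf Z}/4{\bf Z}$ for $I_{\nu}^{\ast}$ with $\nu$ odd; and $({\bf Z}/2{\bf Z})^{2}$ for $I_{\nu}^{\ast}$ with $\nu$ even. Next I would enumerate the possible actions of $\tau$ on each $G$: in the cyclic cases $\mathop{\rm Aut}(G)$ is itself cyclic and small, forcing the action to be trivial or $-1$; only for $I_{\nu}^{\ast}$ with $\nu$ even, where $\mathop{\rm Aut}(G) \cong GL_{2}({\bf F}_{2}) \cong S_{3}$, are actions of order $2$ or $3$ also possible.

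The core computation is then to determine $(\tau-1)G$ in each case and compare with $\bar{P}$, which is nonzero since $P$ has singular reduction. For cyclic $G = {\bf Z}/n{\bf Z}$ with $\tau$ acting as $-1$, $(\tau-1)G = 2G$, equal to $G$ when $n$ is odd and of index $2$ when $n$ is even; this produces cases $1$--$4$. For $G = ({\bf Z}/2{\bf Z})^{2}$ with $\tau$ of order $3$, an order-$3$ element of $GL_{2}({\bf F}_{2})$ has no nonzero fixed vector, so $\tau-1$ is invertible and $(\tau-1)G = G$; for $\tau$ of order $2$, a direct calculation shows that $(\tau-1)G$ coincides with the $\tau$-fixed subgroup, so $\bar{P} \in (\tau-1)G$ exactly when $\bar{P}$ is $\tau$-fixed. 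Together these give case $5$. Whenever $\tau$ acts trivially, $(\tau-1)G = 0$ would force $\bar{P} = 0$ and contradict singular reduction, which is why those sub-cases correctly do not appear.

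The auxiliary condition $2 \mid \#\Gal(L'/K)$ in cases $1$ and $3$ comes from the compatibility requirement that $\bar{P}$ be fixed by $\Gal(M/L') = \langle\tau^{r}\rangle$ with $r = [L':K]$: when $\tau$ acts as $-1$ on a cyclic $G$ of odd order, $\tau^{r}\bar{P} = (-1)^{r}\bar{P}$, so a nonzero $\bar{P}$ forces $r$ to be even. In cases $2$ and $4$ this compatibility is already subsumed by the parity or specific-element condition stated on $\bar{P}$. I expect the main obstacle to be the bookkeeping: verifying case by case that every omitted combination either forces $\bar{P} = 0$ or cannot support a singular-reduction $P$ defined over $L'$, and reading off the $\tau$-action on $E(M)/E_{0}(M)$ from Tate's algorithm cleanly in each Kodaira type.
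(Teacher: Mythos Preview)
Your proposal is correct and follows essentially the same approach as the paper: reduce via Lemma~\ref{le:EmodE0} to the component group $E(M)/E_{0}(M)$, read off its structure from the Kodaira--N\'{e}ron classification, enumerate the possible $\tau$-actions, and compute $(\tau-1)G$ type by type. The paper's proof is organized more narratively---tracking $E(K)/E_{0}(K)$ and split versus non-split behaviour to pin down the $\tau$-action in each sub-case, and checking along the way when a singular-reduction $P\in E(L')$ can exist---whereas you package the existence constraint uniformly as the $\tau^{r}$-invariance of $\bar{P}$; but the underlying computation is the same.
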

\begin{proof}
See N\'{e}ron \cite{Ne}
or Tate \cite{Ta} as a reference on the reduction types.
Since $E$ is given by a minimal Weierstrass equation, the points of
non-singular reduction are the same as the points of $E_{0}$.
Let $E$ have type $I_{\nu}$ reduction (multiplicative) over $K$.
The group
$E(K^{\rm{unr}})/E_{0}(K^{\rm{unr}})$ is isomorphic to the cyclic group of
$\nu$ components
of the special fiber of the minimal regular projective 2-dimensional
scheme $\cal E$ whose generic fiber is isomorphic over $K$ to $E$.
The scheme $\cal E$ is the minimal model for $E$ as a curve, not as
a group variety. The minimal model for $E$ as a group variety is the
N\'{e}ron minimal model (see \cite{Si}, p.\ 358).
The components
form a loop with each component crossing the two adjacent components.
The group $\Gal (K^{\rm{unr}}/K)$ acts on the
group of components and preserves
its structure. Therefore, the group $\Gal (K^{\rm{unr}}/K)$ must act as
$\pm 1$.
When $\Gal (K^{\rm unr}/K)$ acts as $+1$, then $E$ is said to have split
multiplicative reduction over $K$ and $E(K)/E_{0}(K)\cong {\bf Z}/\nu {\bf Z}$.
When $\Gal (K^{\rm unr}/K)$ acts as $-1$, then $E$ is said to have non-split
multiplicative reduction over $K$ and $E(K)/E_{0}(K)\cong {\bf Z}/2{\bf Z}$
when $\nu$ is even and $E(K)/E_{0}(K)$ is trivial when $\nu$ is odd.
If $\nu$ is odd and
$E(K)/E_{0}(K)\neq 0$ then $E(K)/E_{0}(K)\cong {\bf Z}/\nu {\bf Z}
\cong E(M)/E_{0}(M)$
and so $\tau$ acts trivially on $E(M)/E_{0}(M)$. Therefore $P$ cannot
be in the image of $\tau -1=0$. If $\nu$ is odd and $E(K)/E_{0}(K)
=0$ but 2 does not divide $\#\Gal (L'/K)$, then $E(L')/E_{0}(L')=0$ and
so there is no $P$. When $\nu$ is odd and $E(K)/E_{0}(K)=0$ and
$2$ divides $\#\Gal (L'/K)$,
we have $E(L')/E_{0}(L')\cong {\bf Z}/\nu {\bf Z}$.
Since
$\tau $ acts as $-1$, $\tau -1$ acts as multiplication by $-2$ and so
all points of $E(L')$ are in the image of $\tau -1$.

If $\nu$ is even and
$E(K)/E_{0}(K)\not\cong {\bf Z}/2{\bf Z}$ then
$E(K)/E_{0}(K)\cong {\bf Z}/\nu {\bf Z}
\cong E(M)/E_{0}(M)$ and so $\tau$ acts trivially on $E(M)/E_{0}(M)$.
If $\nu\equiv 2({\rm mod}\, 4)$,
$E(K)/E_{0}(K)\cong {\bf Z}/2{\bf Z}$
and $2$ does not divide $\#\Gal (L'/K)$,
then $E(L')/E_{0}(L')\cong {\bf Z}/2{\bf Z}$.
If 2 does not divide $\#\Gal (M/L')$ either then
$E(M)/E_{0}(M)\cong {\bf Z}/2{\bf Z}$ and $\tau$ acts trivially. Even if
$2$ divides $\#\Gal (M/L')$
then the image of $\tau -1$ will be points whose image in
$E(M)/E_{0}(M)
\cong {\bf Z}/\nu {\bf Z}$ are even. The point $P$,
being in $E(L')/E_{0}(L')\cong {\bf Z}/2{\bf Z}$,
will have image $\frac{1}{2}\nu$ in ${\bf Z}/\nu {\bf Z}$
which is odd so it is not in the image of $\tau -1$.
When
$2$ divides
$\#\Gal (L'/K)$, we have $E(L')/E_{0}(L')\cong {\bf Z}/\nu {\bf Z}$.
Since
$\tau$ acts as $-1$, the points whose images in
$E(L')/E_{0}(L')
\cong {\bf Z}/\nu {\bf Z}$ are even are in the image of $\tau -1$.
If $\nu\equiv 0({\rm mod}\, 4)$, $E(K)/E_{0}(K)\cong {\bf Z}/2{\bf Z}$, and
2 does not divide $\#\Gal (M/K)$,
then $E(M)/E_{0}(M)\cong {\bf Z}/2{\bf Z}$ and $\tau$ acts trivially. If
$2$ divides $\#\Gal (M/K)$, but 2 does not divide $\#\Gal (L'/K)$,
then $E(L')/E_{0}(L')\cong {\bf Z}/2{\bf Z}$
and so $2P$ has non-singular reduction.
Notice that since $\tau -1$ acts as multiplication by $-2$, that
$P$ is in the image of $\tau -1$. When
$2$ divides $\#\Gal (L'/K)$,
we have $E(L')/E_{0}(L')\cong {\bf Z}/\nu {\bf Z}$.
Thus the elements of $E(L')[\phi ]$ whose
images in
$E(M)/E_{0}(M)
\cong {\bf Z}/\nu {\bf Z}$ are even are in the image of $\tau -1$.

Let $E$ have type $II$ or $II^{\ast}$ reduction.
The group $E(K^{\rm unr})/
E_{0}(K^{\rm unr})$ is trivial so \linebreak
$E(M)/E_{0}(M)$ is also
and so there could not be a $P$. Let $E$ have type $III$ or $III^{\ast}$
reduction.
The group $E(K^{\rm unr})/
E_{0}(K^{\rm unr})$ is isomorphic to ${\bf Z}/2{\bf Z}$.
Since the
automorphism group of ${\bf Z}/2{\bf Z}$ is trivial, we see that
the action of $\tau$ is
trivial.

Let $E$ have type $IV$ or $IV^{\ast}$ reduction.
The group $E(K^{\rm unr})/
E_{0}(K^{\rm unr})$ is isomorphic to ${\bf Z}/3{\bf Z}$. The
automorphism group of
${\bf Z}/3{\bf Z}$ is isomorphic to ${\bf Z}/3{\bf Z}^{\ast}$.
If
$E(K)/E_{0}(K)\neq 0$
then $E(K)/E_{0}(K)\cong E(M)/E_{0}(M)$ and so
$\tau$ acts trivially.
If $E(K)/E_{0}(K)=0$
and 2 does not divide $\#\Gal (L'/K)$ then there is no $P$.
When
$E(K)/E_{0}(K)=0$ and $2$ divides $\#\Gal (L'/K)$ we have a $P$ defined
over $L'$ and $\tau -1$ acts as an automorphism on $E(M)/E_{0}(M)$
and so $P$ is in the image of $\tau -1$.

Let $E$ have type $I_{\nu}^{\ast}$ reduction. If $\nu$ is odd then
we have $E(K^{\rm unr})/
E_{0}(K^{\rm unr})\cong {\bf Z}/4{\bf Z}$. The automorphism group of
${\bf Z}/4{\bf Z}$ is isomorphic to
${\bf Z}/4{\bf Z}^{\ast}$. Therefore $E(K)/E_{0}(K)$
has 2 or 4 elements. If $E(K)/E_{0}(K)\cong {\bf Z}/4{\bf Z}$, then
$E(K)/E_{0}(K)\cong E(M)/E_{0}(M)$ and so
$\tau$ acts trivially. If $E(K)/E_{0}(K)\cong {\bf Z}/2{\bf Z}$, then
$P\in E(L')[\phi ]$ is in the image of $\tau -1$ if $2P$ has non-singular
reduction and $2$ divides $\#\Gal (M/K)$.

If $\nu$ is even, then
we have $E(K^{\rm unr})/
E_{0}(K^{\rm unr})\cong {\bf Z}/2{\bf Z}\times {\bf Z}/2{\bf Z}$.
When\linebreak
$E(K)/E_{0}(K)\cong {\bf Z}/2{\bf Z}\times {\bf Z}/2{\bf Z}$, we have
$E(K)/E_{0}(K)\cong E(M)/E_{0}(M)$ and so
$\tau$ acts trivially. When $E(K)/E_{0}(K)\cong {\bf Z}/2{\bf Z}$
the action of $\Gal (K^{\rm unr}/K)$ on $E(M)/E_{0}(M)$ has order 2.
Therefore, $P$ could only be in the image of $\tau -1$ if $\tau$ fixes
$P$ modulo $E_{0}(M)$ and $2$ divides $\#\Gal (M/K)$.
When $E(K)/E_{0}(K)=0$,
the action of $\Gal (K^{\rm unr}/K)$ on $E(M)/E_{0}(M)$ has order 3.
There will be no $P$ unless $3$ divides $\#\Gal (L'/K)$.
If that is the case, then $\tau -1$ is
an automorphism on $E(M)/E_{0}(M)$ and so all points $P$
of $E(L')$ will be in the
image of the map $\tau -1$.
\end{proof}

Using the algorithm of Tate, one can quickly determine the
reduction type of $E$ over $K$ and compute
the group $E(K)/E_{0}(K)$. As we will see in examples in
section~\ref{i2kpe}, it is easy to determine the action of $\tau$
on $E(M)/E_{0}(M)$.
So in practice, it is easy to determine
which elements of $E(M)[\phi ]$ are in the image of the map $\tau -1$
on $E(M)$. So for elliptic curves, we have an algorithm for computing
the size of the intersection quotient.

Let us now compute the order of the group $A'(K)/\phi A(K)$.
Let $\cal O$ be the ring of integers in $K$.
For $i\geq 1$ there is a natural homomorphism
\[
NA^{0}({\cal O})\rightarrow NA^{0}({\cal O}/\Gp^{i})\]
where we use $\Gp$ to denote the maximal ideal in $\cal O$.  The
kernel of this homomorphism is often denoted $NA^{i}({\cal O})$ and
it is isomorphic to a subgroup of the group $A_{0}(K)$ which we shall
denote by $A_{i}(K)$. If $p$ is the characteristic of the residue
class field of $K$, then the group $A_{1}(K)$ is a pro-$p$-group
which is isomorphic to the underlying group of a formal group (see
\cite{Mi}, p.\ 56).

There is some $m$ such that for all $n\geq m$,
the groups $A_{n}(K)$ and $A'_{n}(K)$ are isomorphic, as the
underlying groups of formal groups, to the product of $g$ copies of
the additive group of $\cal O$ where
$g$ is the dimension of $A$ and of $A'$ (see \cite{Ma}).  As $\phi$
is a $K$-defined isogeny of algebraic groups we can write $\phi$ as a
$g$-tuple of power series in $g$-variables in some neighborhoods
$A_{l}(K)$ and $A'_{l}(K)$ of the $0$-points for some $l\geq m$.  If
$k$ is in $K$, then let $|k|$ be its normalized absolute value, by
which if $\pi$ is a prime element of $K$ and $q$ is the order of the
residue class field we have $|\pi |= q^{-1}$.  Let $|\phi'(0)|$ be
the normalized absolute value of the determinant of the Jacobian
matrix of partials of $\phi$ evaluated at the 0-point.  The value of
$|\phi'(0)|$ does not depend on the choice of parameters.

\begin{lemma}
\label{size}
The order of the group $A'(K)/\phi A(K)$ is
\[
\frac{|\phi'(0)|^{-1}\cdot
\# A(K)[\phi ]
\cdot\# A'(K)/A'_{0}(K)}
{\# A(K)/A_{0}(K)}.
\]
\end{lemma}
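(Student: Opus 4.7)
The plan is to compute the index by first moving to a deep formal-group neighborhood $A_l(K)$ where $\phi$ acts essentially as a linear map with determinant $\phi'(0)$, and then to bridge back to $A_0(K)$ via the N\'eron filtration. Write $\pi$ for a uniformizer of ${\cal O}$, $k$ for the residue field, and $q = \#k$.

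First I would fix an integer $l \geq m$ large enough that both $A_l(K)$ and $A'_l(K)$ become isomorphic to the additive group of ${\cal O}^g$ via the formal-group parameter from the preceding paragraph; that $\phi$ is represented in these parameters by a convergent $g$-tuple of power series whose linear part $\phi'(0)$ has strictly dominant normalized absolute value compared with the higher-order terms; and that $A(K)[\phi] \cap A_l(K) = 0$. In this regime $\phi$ acts on $A_l(K)$ essentially as a linear endomorphism of ${\cal O}^g$, so the index $[A'_l(K) : \phi A_l(K)]$ equals $|\phi'(0)|^{-1}$ by the standard determinant/volume computation on the formal group.

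Next I would apply the snake lemma to the commutative diagram whose top row is
\[
0 \to A_l(K) \to A(K) \to A(K)/A_l(K) \to 0
\]
and whose bottom row is the analogous sequence for $A'$, with vertical maps induced by $\phi$. All six terms of the resulting snake sequence are finite. Substituting $A_l(K)[\phi] = 0$, $\#\bigl(A'_l(K)/\phi A_l(K)\bigr) = |\phi'(0)|^{-1}$, and the identity $\#\ker / \#\,{\rm coker} = \#\,{\rm source}/\#\,{\rm target}$ applied to the rightmost vertical map $A(K)/A_l(K) \to A'(K)/A'_l(K)$, and taking the alternating product of orders, I obtain
\[
[A'(K):\phi A(K)] \;=\; |\phi'(0)|^{-1} \cdot \# A(K)[\phi] \cdot \frac{[A'(K):A'_l(K)]}{[A(K):A_l(K)]}.
\]
Splitting $[A(K):A_l(K)] = [A(K):A_0(K)] \cdot [A_0(K):A_l(K)]$ and analogously for $A'$, the stated formula follows once I prove the cancellation $[A_0(K):A_l(K)] = [A'_0(K):A'_l(K)]$. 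Both indices equal $(\#\tilde{A}^0(k))\cdot q^{g(l-1)}$ and $(\#\tilde{A'}^0(k))\cdot q^{g(l-1)}$ respectively, since the formal-group filtration $A_1(K) \supset A_2(K) \supset \cdots$ has successive quotients each of order $q^g$ for both $A$ and $A'$. So the cancellation reduces to $\#\tilde{A}^0(k) = \#\tilde{A'}^0(k)$, where $\tilde{A}^0$ and $\tilde{A'}^0$ are the identity components of the special fibers of the N\'eron models of $A$ and $A'$.

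The last step is the main obstacle. The isogeny $\phi$ extends to the N\'eron models and restricts to an isogeny $\tilde\phi^0 : \tilde{A}^0 \to \tilde{A'}^0$ with finite kernel group scheme $N$; Lang's theorem gives $H^1(k, \tilde{A}^0) = 0$ since $\tilde{A}^0$ is connected, so the long exact sequence on $k$-points collapses to
\[
0 \to N(k) \to \tilde{A}^0(k) \to \tilde{A'}^0(k) \to H^1(k, N) \to 0.
\]
Finally $\#N(k) = \#H^1(k, N)$, because Frobenius-minus-identity acting on the finite abelian group $N(\bar{k})$ has equal-order kernel and cokernel; this yields the desired equality of component counts. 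The delicate point is residue characteristic dividing $\deg \phi$, where $N$ may have non-\'etale parts and one should work with flat cohomology, but the Euler-characteristic cancellation on finite commutative group schemes should still give the same answer.
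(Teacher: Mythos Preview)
Your proof is correct and follows essentially the same route as the paper: snake lemma on the filtration by $A_l(K)$, a Jacobian/Haar-measure computation giving $[A'_l(K):\phi A_l(K)] = |\phi'(0)|^{-1}$, and cancellation of the successive filtration quotients down to the component groups. The one difference is that where the paper writes only ``since $A$ and $A'$ are isogenous'' for $\#A_0(K)/A_1(K) = \#A'_0(K)/A'_1(K)$, you give the Lang's-theorem argument on the special fiber explicitly --- a harmless elaboration of the same step.
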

\begin{proof}
From the snake lemma, we have the following commutative diagram
\[
\begin{array}{rcccccccl}
 & & 0 & \rightarrow & A(K)[\phi ] & \rightarrow & H_{1} & \rightarrow & \\
 & & \downarrow & & \downarrow & & \downarrow & & \\
 0 & \rightarrow & A_{l}(K) & \rightarrow & A(K) & \rightarrow &
 A(K)/A_{l}(K) & \rightarrow & 0 \\
 & & \;\;\;\downarrow \phi & & \;\;\;\downarrow \phi & &
 \;\;\;\downarrow \phi & & \\
 0 & \rightarrow & A'_{l}(K) & \rightarrow & A'(K) & \rightarrow &
 A'(K)/A'_{l}(K) & \rightarrow & 0 \\
 & & \downarrow & & \downarrow & & \downarrow & & \\
 & \rightarrow & A'_{l}(K)/\phi A_{l}(K) & \rightarrow &
 A'(K)/\phi A(K) & \rightarrow & H_{2} & \rightarrow & 0
\end{array}
\]
with $l$ as above.
So we have\[
\# A'(K)/\phi A(K)=\# A(K)[\phi]\cdot \# A'_{l}(K)/\phi A_{l}(K)\cdot
\frac{\# H_{2}}{\# H_{1}}\]\[ =
\# A(K)[\phi]\cdot \# A'_{l}(K)/\phi A_{l}(K)\cdot
\frac{\# A'(K)/A'_{l}(K)}{\# A(K)/A_{l}(K)}
\]

Let us first compute $\# A'_{l}(K)/\phi A_{l}(K)$.
Fix isomorphisms of $A_{l}(K)$
and $A'_{l}(K)$ with ${\cal O}^{g}$.
Let $\mu_{A}$ be the Haar
measure on $A_{m}(K)$ which is induced by the product
of the Haar measures on $\cal O$ which have the property that
$\mu({\cal O})=1$ and $\mu(\Gp^{i})=|\pi^{i}|$ for $i\geq 0$.
Define $\mu_{A'}$ similarly.
The isogeny $\phi$ extends to a morphism of the Neron models which respects
the maps from $NA({\cal O})$ to $NA({\cal O}/\Gp^{n})$
and also these maps for $A'$.
We have \[
\int\limits_{A'_{l}(K)}d\mu_{A'}=
\int\limits_{A_{l}(K)}d\mu_{A}=
\int\limits_{\phi A_{l}(K)}|\phi'(0)|^{-1}d\mu_{A'}=
|\phi'(0)|^{-1}\int\limits_{\phi A_{l}(K)}d\mu_{A'}.\]
If $H\subseteq G$ are subgroups of
$A'_{m}(K)$ then $[G:H]=\mu_{A'}(G)/\mu_{A'}(H)$.
Therefore \[
\# A'_{l}(K)/\phi A_{l}(K) = |\phi'(0)|^{-1}.\]

Now let us compute $\# A(K)/A_{l}(K)$ and also for $A'$. For all $i\geq 1$
the quantities $\# A_{i}(K)/A_{i+1}(K)$, $\# A'_{i}(K)/A'_{i+1}(K)$ and
$q^{g}$ are the same. Since $A$ and $A'$ are isogenous, we also have
$\# A_{0}(K)/A_{1}(K)$ equals $\# A'_{0}(K)/A'_{1}(K)$. Therefore the
quotient of
$\# A'(K)/A'_{l}(K)$ by $\# A(K)/A_{l}(K)$ equals the quotient of $\# A'(K)/
A'_{0}(K)$ by $\# A(K)/A_{0}(K)$.
\end{proof}

If $A$ and $A'$ are elliptic curves $E$ and $E'$, then
one can use Tate's algorithm to compute the orders of
$E'(K)/E'_{0}(K)$ and
$E(K)/E_{0}(K)$. The quantity $|\phi'(0)|^{-1}$ is simply the leading coeffient
of the power series representation of $\phi$.
We can compute that using
the following method. First
write $\phi$ explicitly as coordinate functions $\phi (x,y)=
(x',y')$ (see \cite{Ve}).
Then make the substitutions $z=-x/y$ and $z'=-x'/y'$ and start to
write
$z'$ as a power series in $z$
(see \cite{Si}, chapter IV).

For arbitrary abelian varieties,
if the residue characteristic $p$ does not divide the degree of the
isogeny, then $|\phi'(0)|$ is 1, since the neighborhoods of the
0-points are pro-$p$-groups.
This follows from the last equation in the above proof.
If $A$ has good reduction over $K$ then
$A(K)=A_{0}(K)$ and also $A'(K)=A'_{0}(K)$. If $A$ does not have good
reduction over $K$
then it is not always the case that the size of
$A(K)/A_{0}(K)$ is equal
to the size of $A'(K)/A'_{0}(K)$.
As an example, let $E$ be the elliptic curve defined by the equation
$Y^{2}=X^{3}-189X+1269$ over ${\bf Q}_{31}$
and let $\phi$ be the rational 3-isogeny
gotten by dividing out by the 0-point and
the points $(3,\pm 27)$.
The discriminant of $E$ is $\Delta_{E}=-2^{4}\cdot 3^{12}\cdot 31$. This curve
has split multiplicative reduction over ${\bf Q}_{31}$.
Recall that the
order of the group $E({\bf Q}_{31})/E_{0}({\bf Q}_{31})$ is the same as
the valuation of the discriminant for curves of split multiplicative
reduction.
So the order
of the
group $E({\bf Q}_{31})/E_{0}({\bf Q}_{31})$ is 1.
The curve $E'=E/E[\phi ]$ has Weierstrass equation
$Y^{2}=X^{3}+1431X-12339$. This curve has discriminant
$\Delta_{E'}=-2^{4}\cdot 3^{12}\cdot 31^{3}$
and also has split multiplicative reduction over ${\bf Q}_{31}$.
Thus the order of the group $E'({\bf Q}_{31})/E'_{0}({\bf Q}_{31})$ is 3.

The size of the group $A'(K)/\phi A(K)$ is much easier to compute if
$\phi$ is a multiplication by $n$ map because then we can take
$A=A'$.

\begin{prop}
\label{pr:sizen}
If $K$ is a finite extension of ${\bf Q}_{p}$ and $A$ has dimension $g$ and
if $r={\rm ord}_{p}(n)$ then
$\# A(K)/nA(K)=p^{gr[K:{\bf Q}_{p}]}\cdot \#A(K)[n]$.
\end{prop}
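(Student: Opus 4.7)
The plan is to simply specialize Lemma~\ref{size} to the case $A' = A$ and $\phi = [n]$. Writing out that formula gives
\[ \# A(K)/nA(K) = \frac{|[n]'(0)|^{-1}\cdot \#A(K)[n]\cdot \#A(K)/A_{0}(K)}{\#A(K)/A_{0}(K)} = |[n]'(0)|^{-1}\cdot \#A(K)[n], \]
so the whole task reduces to computing the normalized absolute value of the Jacobian determinant of the multiplication-by-$n$ map at the origin.

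The key observation is that on any formal group (in particular on the formal group attached to the neighborhood $A_l(K)$ of the identity), multiplication by $n$ is, to first order, multiplication by $n$ on the tangent space. Concretely, if one writes $[n]$ as a $g$-tuple of power series in $g$ variables vanishing at the origin, each series has linear term $n X_i$, so the Jacobian matrix at the origin is $n \cdot I_g$ and its determinant is $n^g$. Therefore $|[n]'(0)| = |n|^g$.

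It remains to convert $|n|$ into a power of $p$. With $e$ the ramification index and $f$ the residue degree of $K/{\bf Q}_p$, the normalization $|\pi| = q^{-1} = p^{-f}$ and $p = (\text{unit})\cdot \pi^e$ give $|p| = p^{-ef} = p^{-[K:{\bf Q}_p]}$. Since $r = \mathrm{ord}_p(n)$, this yields $|n| = p^{-r[K:{\bf Q}_p]}$, hence $|[n]'(0)|^{-1} = p^{gr[K:{\bf Q}_p]}$, and plugging back gives the claimed formula.

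I do not anticipate a real obstacle here; everything is a direct consequence of Lemma~\ref{size} together with the standard fact about the derivative of multiplication by $n$. The only point to be careful about is the normalization convention for the absolute value on $K$, which is fixed earlier in the section (namely $|\pi| = q^{-1}$), so that $|p|$ picks up the full degree $[K:{\bf Q}_p]$ rather than just the ramification index.
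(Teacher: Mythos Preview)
Your argument is correct. You specialize Lemma~\ref{size} to $A'=A$, $\phi=[n]$, cancel the component-group factors, and then compute $|[n]'(0)|$ by observing that $[n]$ acts as multiplication by $n$ on the tangent space, so the Jacobian determinant is $n^{g}$; the normalization $|\pi|=q^{-1}$ then gives $|n|^{-g}=p^{gr[K:\mathbf{Q}_{p}]}$.

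The paper takes a slightly different route: rather than quoting Lemma~\ref{size}, it reruns the snake-lemma argument directly with $A=A'$ and the subgroup $A_{m}(K)\cong\mathcal{O}^{g}$. The two component-group terms are replaced by the kernel $H_{1}$ and cokernel $H_{2}$ of $[n]$ on the finite group $A(K)/A_{m}(K)$, which visibly have the same order; and in place of your derivative computation the paper simply counts $\#\,\mathcal{O}^{g}/n\mathcal{O}^{g}=p^{gr[K:\mathbf{Q}_{p}]}$. The two arguments are equivalent in content---your $|[n]'(0)|^{-1}$ is exactly the paper's $\#A_{m}(K)/nA_{m}(K)$ once one unwinds the Haar-measure step in the proof of Lemma~\ref{size}. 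Your version is more economical in that it reuses the lemma; the paper's version avoids invoking the formal-group derivative and instead makes the elementary count on $\mathcal{O}^{g}$ directly.
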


\begin{proof}
Let $A_{m}(K)$ be isomorphic to ${\cal O}^{g}$.
Then we have
\[
\begin{array}{rcccccccl}
 & & 0 & \rightarrow & A(K)[n] & \rightarrow & H_{1} & \rightarrow & \\
 & & \downarrow & & \downarrow & & \downarrow & & \\
 0 & \rightarrow & A_{m}(K) & \rightarrow & A(K) & \rightarrow &
 A(K)/A_{m}(K) & \rightarrow & 0 \\
 & & \;\;\;\;\;\;\downarrow [n] & & \;\;\;\;\;\;\downarrow [n] & &
\;\;\;\;\;\;\downarrow [n] & & \\
 0 & \rightarrow & A_{m}(K) & \rightarrow & A(K) & \rightarrow &
 A(K)/A_{m}(K) & \rightarrow & 0 \\
 & & \downarrow & & \downarrow & & \downarrow & & \\
 & \rightarrow & A_{m}(K)/nA_{m}(K) & \rightarrow &
 A(K)/nA(K) & \rightarrow & H_{2} & \rightarrow & 0.
\end{array}
\]
It is clear that $H_{1}$ and $H_{2}$ have the same size and that
$\# A_{m}(K)/nA_{m}(K)=p^{gr[K:{\bf Q}_{p}]}$.
\end{proof}

To close this section, let us discuss what happens for the completion of
a number field at an infinite prime.

\begin{lemma}
\label{le:arch}
Let $K$ be the completion of a number field at an infinite prime.
Let $\phi$ be a $K$-defined isogeny from $A$ onto $A'$, abelian varieties
which are defined over $K$. Let $L=K(A[\phi ])$
and $H^{1}(\Gal (L/K),A[\phi ])=0$. The group of unramified
homomorphisms in $\Hom_{\Gal (L/K)}(\Gal (\overline{L}/L),A[\phi])$ is
trivial.
If $K\cong {\bf C}$ then $A'(K)/\phi A(K)=0$.
If $K\cong {\bf R}$ and the degree of $\phi$ is odd, then $A'(K)/\phi A(K)=0$.
If $K\cong {\bf R}$ and there are points of $A[\phi ]$ that are not defined
over ${\bf R}$ then $A'(K)/\phi A(K)=0$.
Otherwise $A'(K)/\phi A(K)$ has exponent 2 and
we have
$\# A'(K)/\phi A(K)\leq 2^{g}$.
\end{lemma}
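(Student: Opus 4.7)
The plan is to treat the five assertions in a natural order, with the bulk of the work going into the final bound. First, for the unramified-homomorphism claim, note that at an archimedean place $L$ is either ${\bf R}$ or ${\bf C}$, and by the usual convention neither admits a non-trivial unramified extension (${\bf C}/{\bf R}$ is regarded as ramified at the archimedean place). Thus $\Gal(L^{\rm unr}/L)$ is trivial and every unramified homomorphism vanishes.

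For the vanishing results I would exploit the embedding $A'(K)/\phi A(K)\hookrightarrow H^{1}(K,A[\phi])$ coming from the short exact sequence $0\to A[\phi]\to A\to A'\to 0$. Over $K\cong {\bf C}$ the group $A({\bf C})\cong {\bf C}^{g}/\Lambda$ is divisible, so $\phi$ is already surjective on ${\bf C}$-points and the quotient is zero. Over $K\cong{\bf R}$ we have $H^{1}(K,-)=H^{1}(\Gal({\bf C}/{\bf R}),-)$, which is annihilated by $2$; hence when $\deg\phi$ is odd the cohomology is killed by both $2$ and $|A[\phi]|$ and vanishes. When $L={\bf C}$ (i.e.\ some point of $A[\phi]$ is not real), the hypothesis $H^{1}(\Gal(L/K),A[\phi])=0$ together with $\Gal(\overline{L}/L)=1$ and the inflation--restriction sequence force $H^{1}(K,A[\phi])=0$, so again the quotient vanishes.

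The remaining case has $K=L={\bf R}$ and $\deg\phi$ even. Exponent~$2$ of $A'(K)/\phi A(K)$ again follows from its embedding into the exponent-$2$ group $H^{1}({\bf R},A[\phi])$. For the order bound I would switch to Lie-theoretic language: $A({\bf R})$ is a compact real abelian Lie group of real dimension $g$, so its identity component $A({\bf R})^{0}$ is isomorphic to $({\bf R}/{\bf Z})^{g}$ and in particular is divisible. Since $\phi$ is a morphism of $g$-dimensional connected Lie groups with finite kernel on identity components, $\phi A({\bf R})^{0}=A'({\bf R})^{0}$, and $A'({\bf R})/\phi A({\bf R})$ is a quotient of the component group $A'({\bf R})/A'({\bf R})^{0}$.

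The main obstacle is bounding this component group by $2^{g}$, which I would handle as follows. Let $c$ denote complex conjugation. For $P\in A'({\bf R})$, divisibility of $A'({\bf C})$ gives $Q\in A'({\bf C})$ with $2Q=P$; then $Q+c(Q)$ lies in $A'({\bf R})^{0}$ (being in the image of the continuous map $x\mapsto x+c(x)$ on the connected group $A'({\bf C})$) and $2(Q+c(Q))=2P$, so $2P\in A'({\bf R})^{0}$. Hence $A'({\bf R})/A'({\bf R})^{0}$ has exponent $2$. Divisibility of $A'({\bf R})^{0}$ then lets us represent every class in the component group by an element of $A'({\bf R})[2]$, yielding a surjection $A'({\bf R})[2]\twoheadrightarrow A'({\bf R})/A'({\bf R})^{0}$ with kernel $A'({\bf R})^{0}[2]\cong ({\bf Z}/2{\bf Z})^{g}$. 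Since $A'({\bf R})[2]\subseteq A'[2]\cong ({\bf Z}/2{\bf Z})^{2g}$, we obtain $|A'({\bf R})/A'({\bf R})^{0}|\leq 2^{2g}/2^{g}=2^{g}$, which is the desired bound.
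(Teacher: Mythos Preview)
Your argument is correct. The first four assertions are handled essentially as in the paper (divisibility of $A({\bf C})$, the embedding into $H^{1}(\Gal({\bf C}/{\bf R}),A[\phi])$, annihilation by $2$, and the hypothesis $H^{1}(\Gal(L/K),A[\phi])=0$ when $L={\bf C}$).

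The genuine difference is in bounding the number of real components of $A'$ by $2^{g}$. The paper uses the uniformization exact sequence
\[
0\rightarrow \Lambda\rightarrow {\bf C}^{g}\rightarrow A'({\bf C})\rightarrow 0
\]
of $\Gal({\bf C}/{\bf R})$-modules and identifies the component group of $A'({\bf R})$ with $H^{1}(\Gal({\bf C}/{\bf R}),\Lambda)\cong \ker(\sigma+1)/(\sigma-1)\Lambda$; since $\ker(\sigma+1)$ on $\Lambda$ sits inside the $g$-dimensional real subspace $\ker(\sigma+1)\subset{\bf C}^{g}$, it has rank at most $g$, and a $2$-torsion quotient of ${\bf Z}^{g}$ has order at most $2^{g}$. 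You instead work intrinsically with the Lie group $A'({\bf R})$: the trace map $x\mapsto x+c(x)$ shows the component group has exponent $2$, divisibility of the torus $A'({\bf R})^{0}\cong({\bf R}/{\bf Z})^{g}$ gives a surjection $A'({\bf R})[2]\twoheadrightarrow A'({\bf R})/A'({\bf R})^{0}$ with kernel of order $2^{g}$, and $A'({\bf R})[2]\subseteq A'[2]$ of order $2^{2g}$ finishes it. Your route avoids the explicit lattice cohomology and is slightly more elementary; the paper's route has the virtue that the identification $A'({\bf R})/A'({\bf R})^{0}\cong H^{1}(\Gal({\bf C}/{\bf R}),\Lambda)$ is reused later (in the proof of Proposition~\ref{divis}) to compute $\#A({\bf R})/2A({\bf R})$ exactly.
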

\begin{proof}
Since ${\bf R}$ and ${\bf C}$ have no non-trivial unramified extensions,
the only homomorphism from $\Gal (\overline{L}/L)$
to $A[\phi ]$ which is unramified
is the trivial homomorphism.
If $K\cong {\bf C}$ then
$\phi$ maps onto $A'(K)$ so $A'(K)/\phi A(K)=0$. Let $K\cong {\bf R}$.
If the degree of $\phi$ is odd, then $A'(K)/\phi A(K)$ embeds into
$H^{1}(\Gal ({\bf C}/{\bf R}),A[\phi ])$ which is trivial since $[{\bf C}:
{\bf R}]$
is coprime to the order of $A[\phi ]$ (see \cite{AW}, p.\ 105).
If there are points of $A[\phi ]$ that are not defined over ${\bf R}$ then
$L \cong {\bf C}$ and so $H^{1}(\Gal ({\bf C}/{\bf R}),A[\phi ])$ is trivial
by assumption.

The only other case is that $K\cong {\bf R}$, the degree of $\phi$ is even
and all of the points of $A[\phi ]$ are defined over ${\bf R}$.
We would like to show that $A'({\bf R})$ has at
most $2^{g}$ components.
Let ${\bf C}^{g}$ denote the tangent space to
$A'({\bf C})$ at the 0-point of $A'({\bf C})$. Since $A'$ is defined
over ${\bf R}$ there is a $2g$-dimensional lattice
$\Lambda$ fixed by the action of $\Gal ({\bf C}/{\bf R})$
with
$A'({\bf C})$ isomorphic to the quotient of ${\bf C}^{g}$
by $\Lambda$
(see \cite{Ro}). We can look at this as an exact sequence of $\Gal
({\bf C}/{\bf R})$-modules
\[
0\rightarrow \Lambda\rightarrow {\bf C}^{g}\rightarrow A'({\bf C})\rightarrow
0.\]
By taking $\Gal ({\bf C}/{\bf R})$-invariants, we get the following exact
sequence of groups, where ${\bf R}^{g}$ denotes the tangent space
to $A'({\bf R})$ at its 0-element, \[
0\rightarrow\Lambda \cap {\bf R}^{g}\rightarrow {\bf R}^{g}\rightarrow
A'({\bf R})\rightarrow H^{1}(\Gal ({\bf C}/{\bf R}),\Lambda )\rightarrow 0.\]
Since $H^{1}(\Gal ({\bf C}/{\bf R}),{\bf C}^{g})=0$ (see \cite{Se}, p.\ 152),
we get a 0 at the right end of that exact sequence.
Let $\sigma$ generate
$\Gal ({\bf C}/{\bf R})$. We have\[
H^{1}(\Gal ({\bf C}/{\bf R}),\Lambda )\cong \frac{{\rm ker}(\sigma +1):
\Lambda\rightarrow\Lambda}{(\sigma -1)\Lambda}\]
Now the kernel of $\sigma +1$ on ${\bf C}^{g}$ is a $g$-dimensional vector
space over ${\bf R}$ so the kernel of $\sigma +1$ on $\Lambda$ is a lattice
of dimension $g$. Since 2 annihilates the cohomology group
(see \cite{AW}, p.\ 105),
the cohomology
group has size at most $2^{g}$.
The image of ${\bf R}^{g}$ is connected,
so $A'({\bf R})$ has at most $2^{g}$ components.

The isogeny $\phi$ takes
components onto components and so the size of the group \linebreak
$A'({\bf R})/\phi
A({\bf R})$ is at most $2^{g}$. The group $A'({\bf R})/\phi A({\bf R})$
embeds into $H^{1}(\Gal ({\bf C}/{\bf R}),A[\phi ])$ which 2 annihilates,
so $A'({\bf R})/\phi
A({\bf R})$ has exponent 2.
\end{proof}

If $A$ and $A'$
are elliptic curves $E$ and $E'$, then we can compute the
size of $E'(K)/\phi E(K)$ exactly.
If $K\cong {\bf R}$
we can write down a Weierstrass equation of the form
$Y^{2}=f$ where $f\in {\bf R}[X]$.
The discriminant of the Weierstrass equation is 16 times the discriminant
of $f$. If $f$ has 3 real roots then these discriminants are positive.
If $f$ has 2 imaginary roots then these discriminants are negative.
When the discriminant is positive we can write down a Weierstrass
equation of the form
$Y^{2}=(X-a)(X-b)(X-c)$ where $a,b,c$ are real and $a<b<c$.
\begin{prop}
\label{pr:archec}
Let $K$ be the completion of a number
field at an infinite prime.
Let $E$ and $E'$ be elliptic curves defined over $K$ and let $\phi$
be a $K$-defined isogeny of $E$ onto $E'$.
Let $L=K(E[\phi ])$
and $H^{1}(\Gal (L/K),E[\phi ])=0$.
The group $E'(K)/\phi E(K)$ is trivial
unless $K\cong {\bf R}$, the degree of $\phi$ is even, all of
the points of $E[\phi ]$ are defined over $K$
and we are in one of the following cases when the group will have order 2.
\begin{itemize}
\item
The discriminant of $E$ is negative.
\item
The discriminant of $E$ is positive
and the 2-torsion point $(a,0)$ is contained in $E[\phi ]$.
\end{itemize}
\end{prop}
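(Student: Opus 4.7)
The plan is to reduce to a case-by-case analysis of the connected components of $E({\bf R})$ and $E'({\bf R})$ via the snake lemma, after dispatching the easy cases with lemma~\ref{le:arch}.

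Lemma~\ref{le:arch} already handles every situation except $K\cong {\bf R}$, $\deg\phi$ even, and $E[\phi]\subseteq E({\bf R})$; in that remaining case it guarantees $\# E'({\bf R})/\phi E({\bf R})\leq 2$, so I need only distinguish order $1$ from order $2$. For this I would use the commutative diagram with exact rows
\[
\begin{array}{ccccccccc}
0 & \rightarrow & E^{0}({\bf R}) & \rightarrow & E({\bf R}) & \rightarrow & \pi_{0}(E({\bf R})) & \rightarrow & 0 \\
 & & \downarrow\phi & & \downarrow\phi & & \downarrow\phi & & \\
0 & \rightarrow & (E')^{0}({\bf R}) & \rightarrow & E'({\bf R}) & \rightarrow & \pi_{0}(E'({\bf R})) & \rightarrow & 0.
\end{array}
\]
Both identity components are real Lie groups isomorphic to $S^{1}$, and $\phi$ restricts to a non-constant continuous homomorphism between them; its image is a closed connected subgroup of $S^{1}$ of positive dimension, hence all of $S^{1}$. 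The snake lemma then yields an isomorphism
\[ E'({\bf R})/\phi E({\bf R}) \cong \pi_{0}(E'({\bf R}))/\phi\pi_{0}(E({\bf R})). \]
Since $\#\pi_{0}(E({\bf R}))=2$ iff ${\rm disc}(E)>0$ (and similarly for $E'$), the problem reduces to computing this cokernel.

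The computation is then carried out via the uniformization $E({\bf C})={\bf C}/\Lambda$, $E'({\bf C})={\bf C}/\Lambda'$, with $\Lambda\subseteq \Lambda'$ and $\Lambda'/\Lambda\cong E[\phi]$; the hypothesis $E[\phi]\subseteq E({\bf R})$ says $\Lambda'/\Lambda$ is fixed pointwise by complex conjugation. Under the Weierstrass convention $\wp(\omega_{1}/2)=e_{1}$, $\wp((\omega_{1}+\omega_{2})/2)=e_{2}$, $\wp(\omega_{2}/2)=e_{3}$ with $e_{1}>e_{2}>e_{3}$, I identify $(c,0)=\omega_{1}/2$, $(b,0)=(\omega_{1}+\omega_{2})/2$, and $(a,0)=\omega_{2}/2$; when ${\rm disc}(E)>0$ the period $\omega_{2}$ is purely imaginary, and when ${\rm disc}(E)<0$ the lattice $\Lambda$ is rhombic with only one real half-period, equal to $(a,0)$. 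Three sub-cases then remain. (i) If ${\rm disc}(E)<0$, the group $E[\phi]$ is cyclic (a finite subgroup of the circle $E({\bf R})$) and contains $(a,0)$ since $\deg\phi$ is even; $\Lambda'$ then acquires a half of the real period, becomes rectangular, and ${\rm disc}(E')>0$, giving a cokernel of order $2$. (ii) If ${\rm disc}(E)>0$ and $(a,0)\in E[\phi]$, then $\omega_{2}/2\in\Lambda'$, so $\Lambda'$ remains rectangular and ${\rm disc}(E')>0$; moreover $\phi((a,0))=O'$, so $\phi$ is trivial on $\pi_{0}$ and the cokernel equals $\pi_{0}(E'({\bf R}))$, of order $2$. (iii) If ${\rm disc}(E)>0$ and $(a,0)\notin E[\phi]$, then the 2-torsion of $E[\phi]$ is $\{O,(b,0)\}$ or $\{O,(c,0)\}$: in the first, $\Lambda'$ picks up the centering element $(\omega_{1}+\omega_{2})/2$ and becomes rhombic, so $\pi_{0}(E'({\bf R}))=0$; in the second, $\Lambda'$ is rectangular but $\phi((a,0))=\omega_{2}/2$ still lies in the non-identity component of $E'({\bf R})$, so $\phi$ is an isomorphism on $\pi_{0}$. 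Both yield a trivial cokernel.

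The main subtlety is the identification $(a,0)\leftrightarrow\omega_{2}/2$, forced by the convention $e_{1}>e_{2}>e_{3}$; this is precisely what produces the asymmetric role of $(a,0)$ versus $(b,0)$ in the bulleted list. Once this bookkeeping is fixed, the remaining lattice verifications---that adding further real-point kernel generators to $\Lambda$ preserves rectangularity in the appropriate cases---are mechanical.
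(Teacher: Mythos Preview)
Your argument is correct, but it proceeds by a different mechanism than the paper. The paper exploits the exact sequence
\[
0\rightarrow E'({\bf R})/\phi E({\bf R})\rightarrow H^{1}({\bf R},E[\phi])\rightarrow H^{1}({\bf R},E({\bf C}))[\phi]\rightarrow 0,
\]
computes $H^{1}({\bf R},E({\bf C}))$ directly from the lattice as $\ker(\sigma+1)/\mathrm{im}(\sigma-1)$, and then decides whether the nontrivial class is $\phi$-torsion by locating the 2-torsion points $(a,0),(b,0),(c,0)$ among the half-periods. You instead bypass cohomology entirely, using the snake lemma on the identity-component sequence to obtain $E'({\bf R})/\phi E({\bf R})\cong\mathrm{coker}\bigl(\phi:\pi_{0}E({\bf R})\to\pi_{0}E'({\bf R})\bigr)$, and then determine the shape of $\Lambda'$ (rectangular versus rhombic) case by case. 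Your route is arguably more elementary---no Galois cohomology of $E({\bf C})$ is needed---but it requires you to track the target curve $E'$ and its lattice explicitly, whereas the paper's computation takes place entirely on the side of $E$ and $E[\phi]$. Both approaches ultimately rest on the same bookkeeping step, namely the identification of $(a,0)$ with the purely imaginary half-period; the paper justifies this by observing that quotienting by $(b,0)$ yields a curve of negative discriminant, while you invoke the standard ordering $e_{1}>e_{2}>e_{3}$ of the Weierstrass values.
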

\begin{proof}
The only thing left to do after the proof of lemma~\ref{le:arch}
is show the size of the group $E'(K)/\phi E(K)$
in the case that $K\cong {\bf R}$, the degree of $\phi$ is even and all of
the points of $E[\phi ]$ are defined over $K$.
To ease notation we will say that $K={\bf R}$.

Since $E$ is defined over ${\bf R}$,
the group of points of $E$ over the complex numbers is isomorphic to
the complex numbers modulo a lattice generated by 1 and $\alpha i$
(when $\Delta_{E}>0$)
or $1/2+\alpha i$ (when $\Delta_{E}<0$) where
$\alpha$ is a positive real number (see \cite{Si}, chapter VI).

\begin{picture}(4000,1300)
\put(0,100){\begin{picture}(900,1200)
\put(150,300){\circle*{30}}
\put(150,900){\circle*{30}}
\put(550,300){\circle*{30}}
\put(550,900){\circle*{30}}
\thicklines
\put(150,300){\line(1,0){400}}
\put(150,600){\line(1,0){400}}
\put(350,100){\makebox(0,0){$E({\bf R})$}}
\put(60,200){0}
\put(605,200){1}
\put(40,910){\makebox(0,0){$\alpha i$}}\end{picture}}
\put(900,100){\begin{picture}(1000,1200)
\thicklines
\put(250,300){\circle*{30}}
\put(250,900){\circle*{30}}
\put(650,300){\circle*{30}}
\put(650,900){\circle*{30}}
\put(140,910){\makebox(0,0){$\alpha i$}}
\put(160,200){0}
\put(705,200){1}
\put(250,300){\line(0,1){600}}
\put(450,300){\line(0,1){600}}
\put(450,100){\makebox(0,0){${\rm ker}(\sigma +1)$}}
\put(0,500){\shortstack{im\\ $\sigma -1$ \\ $\rightarrow$}}
\end{picture}}	
\put(1800,100){\begin{picture}(900,1200)
\thicklines
\put(250,600){\circle*{30}}
\put(650,600){\circle*{30}}
\put(450,300){\circle*{30}}
\put(450,900){\circle*{30}}
\put(250,600){\line(1,0){400}}
\put(450,100){\makebox(0,0){$E({\bf R})$}}
\put(160,570){0}
\put(700,570){1}
\put(450,1010){\makebox(0,0){$\frac{1}{2}+\alpha i$}}\end{picture}}
\put(2700,100){\begin{picture}(1000,1200)
\thicklines
\put(250,600){\circle*{30}}
\put(650,600){\circle*{30}}
\put(450,300){\circle*{30}}
\put(450,900){\circle*{30}}
\put(160,570){0}
\put(700,570){1}
\put(450,300){\line(0,1){600}}
\put(250,0){\shortstack{${\rm ker}(\sigma +1)$\\ $={\rm im}(\sigma -1)$}}
\put(450,1010){\makebox(0,0){$\frac{1}{2}+\alpha i$}}\end{picture}}
\end{picture}

In the first case, $E({\bf R})$ is made
up of two components
with imaginary parts 0 and $\alpha /2$ in the given fundamental domain.
In the second case, $E({\bf R})$ is the subgroup of the given fundamental
domain with imaginary part 0.
We have the following exact
sequence
\[
0\rightarrow E'({\bf R})/\phi E({\bf R})\rightarrow
H^{1}({\bf R},E[\phi ]) \rightarrow
H^{1}({\bf R},E({\bf C}))[\phi ]\rightarrow 0.
\]

Let $\sigma$ generate $\Gal ({\bf C}/{\bf R})$ and
let the discriminant of $E$ be positive.
On $E({\bf C})$,
the kernel of the norm, $\sigma +1$, has two components,
the set of points with real part $1/2$ and the
set of points with real part 0.
The image of $\sigma -1$ is just the
set of points with real part 0. Thus the group $H^{1}({\bf R},
E({\bf C}))$ has order 2 and is generated by the component with real
part $1/2$.
To determine the order of $H^{1}({\bf R},
E({\bf C}))[\phi ]$, we just need to check whether or not an element of
the kernel of $\phi$ is on the component with real part $1/2$.
The points in the kernel of $\phi$ are all in $E({\bf R})$ and the points
of $E({\bf R})$
intersect the component with real part $1/2$ only at 2-torsion points.
Therefore if there is a 2-torsion point in $E[\phi ]$
with real part $1/2$, then all of $H^{1}({\bf R},E({\bf C}))$ is
$\phi$-torsion.
By graphing $E({\bf R})$ we see that
the 2-torsion point $(c,0)$ is on the component of $E({\bf R})$ that
includes the identity and so
maps to the point $1/2$ in the fundamental domain.
A quick computation with 2-isogenies (see \cite{Si}, p.\ 74)
shows that by dividing out $E$
by the subgroup generated by the 2-torsion point $(b,0)$,
that the quotient curve has negative
discriminant, and so the point $(b,0)$ must map to the point
$(1+\alpha i)/2$. Therefore the point $(a,0)$ must map to the point
$\alpha i/2$.
If all of $E[2]$ is contained in $E[\phi ]$ then the points $(b,0)$ and
$(c,0)$ are in $E[\phi ]$ so the group $H^{1}({\bf R},E({\bf C}))[\phi ]$
has order 2. The group $H^{1}({\bf R},E[\phi ])=\Hom
(\Gal ({\bf C}/{\bf R}),E[\phi ])$ will have order 4 so
the group $E'({\bf R})/\phi E({\bf R})$ will have order 2.
If $E[2]$ is not contained in $E[\phi ]$ then the group
$H^{1}({\bf R},E[\phi ])$
will have order 2. If the non-trivial 2-torsion
point in $E[\phi ]$ is $(a,0)$ then
the group $H^{1}({\bf R},E({\bf C}))[\phi ]$ will be trivial and have
order 2 otherwise. Therefore the group $E'({\bf R})/\phi E({\bf R})$ will have
order 2 if $(a,0)$ is the 2-torsion point in $E[\phi ]$ and will be
trivial otherwise.

If the discriminant is negative, then we see that the kernel of
$\sigma +1$ on $E({\bf C})$
is the same as the image of $\sigma -1$; it is the subgroup of the
fundamental domain with real part equal to $1/2$. Thus the
group $H^{1}({\bf R},E({\bf C}))[\phi ]$ is trivial.
Since the discrimimant of $E$ is negative, the group $E({\bf R})[2]$ has
order 2 so
the group $H^{1}({\bf R},
E[\phi ])$ has order 2. Therefore
$E'({\bf R})/\phi E({\bf R})$ has order 2.
\end{proof}

\section{Global computations}
\label{glob comp}

In this section
we will bring together the local results from the previous section
to prove a theorem giving upper bounds for the index of the
global intersection in the Selmer group and in the group related to an
ideal class group.
Let $A$ and $A'$ be abelian varieties of the same dimension $g$,
defined over $K$, an algebraic
number field, and let $\phi$ be a $K$-defined isogeny from
$A$ onto $A'$.
Let $L$ be the minimal field of definition of the points
in $A[\phi ]$ and assume that $H^{1}(G,A[\phi])$ is trivial
for all $G\subseteq\Gal (L/K)$.
Let $S^{\phi}(K,A)$ be the $\phi$-Selmer group of $A$ over $K$.
Let $C^{\phi}(K,A)$ be the subgroup of unramified homomorphisms in
$\Hom_{G(L/K)}(\Gal (\overline{L}/L),A[\phi ])$ and
let $I^{\phi}(K,A)$ be the intersection of
$C^{\phi}(K,A)$ and $S^{\phi}(K,A)$.

Let $\Gp$ be a prime of $K$.
Denote by $S^{\phi}(K_{\Gp},A)$ the group $A'(K_{\Gp})/\phi A(K_{\Gp})$.
Let $C^{\phi}(K_{\Gp},A)$ be the subgroup of unramified homomorphisms in
$\Hom_{G(LK_{\Gp}/K_{\Gp})}(\Gal (\overline{LK_{\Gp}}/LK_{\Gp}),A[\phi ])$
and let $I^{\phi}(K_{\Gp},A)$ be the intersection of $S^{\phi}(K_{\Gp},A)$
and $C^{\phi}(K_{\Gp},A)$.

Fix $\Gp$, a finite prime, and let $m_{\Gp}$ be the exponent of the
group $A(K_{\Gp})[\phi ]$. Denote by $M_{\Gp}$ the intersection of
the maximal unramified extension of $K_{\Gp}$ and the unramified
extension of $LK_{\Gp}$ of degree $m_{\Gp}$.
The extension $M_{\Gp}/K_{\Gp}$ is unramified and so the group
$\Gal (M_{\Gp}/K_{\Gp})$ is cyclic which we shall say is generated
by $\tau_{\Gp}$.

Let $NA$ and $NA'$ be the N\'{e}ron models of $A$ and $A'$ over $\cal
O$, the ring of integers in $K_{\Gp}$. Define $NA^{0}$ to be the open
subgroup scheme of $NA$ whose generic fiber is isomorphic to $A$ over
$K_{\Gp}$ and whose special fiber is the identity component of the
closed fiber of $NA$. The group $NA^{0}({\cal O})$ is isomorphic to a
subgroup of $A(K_{\Gp})$ which we shall denote by $A_{0}(K_{\Gp})$;
define $A'_{0}$ similarly.
If we compare at the 0-points of $A$ and $A'$, then $\phi$ can be written as
a $g$-tuple of power series in $g$ variables. Let $|\phi '(0)|$ be the
normalized
absolute value of the determinant of the Jacobian matrix of partials
for $\phi$ evaluated at the 0-point.

\begin{thm}
\label{1st thm}
Let $H^{1}(G,A[\phi ])=H^2(G,A[\phi])=0$ for all
$G\subseteq \Gal (L/K)$.
We have the following injections of groups
\[
S^{\phi}(K,A)/I^{\phi}(K,A)\hookrightarrow
\prod_{\Gp}S^{\phi}(K_{\Gp},A)/I^{\phi}(K_{\Gp},A)\]
\[
C^{\phi}(K,A)/I^{\phi}(K,A)\hookrightarrow
\prod_{\Gp}C^{\phi}(K_{\Gp},A)/I^{\phi}(K_{\Gp},A)\]
where $\Gp$ ranges over the primes of $K$.
For each finite prime $\Gp$ of $K$ that does not divide the conductor of
$A$ or the degree of $\phi$,
the groups $S^{\phi}(K_{\Gp},A)$, $I^{\phi}(K_{\Gp},A)$,
$C^{\phi}(K_{\Gp},A)$ and $A(K_{\Gp})[\phi ]$
are isomorphic.

The group $C^{\phi}(K_{\Gp},A)$ is trivial if $\Gp$ is infinite,
and isomorphic to $A(K_{\Gp})[\phi ]$ otherwise.
The group $I^{\phi}(K_{\Gp},A)$ is trivial if $\Gp$ is infinite, and
isomorphic to
\[
\frac{A(M_{\Gp})[\phi ]
\cap (\tau_{\Gp}-1)A(M_{\Gp})}{(\tau_{\Gp}-1)(A(M_{\Gp})[\phi ])}
\]
otherwise.
The group $S^{\phi}(K_{\Gp},A)$
is trivial if $\Gp$ is infinite
unless $\Gp$ is a real
prime, the degree of $\phi$ is even, and the points in $A[\phi ]$ are
defined over $K_{\Gp}$ in which case the group
has exponent 2 and has size at most
$2^{g}$.
The group
$S^{\phi}(K_{\Gp},A)$ has order
\[
\frac{|\phi '(0)|^{-1}
\cdot\# A(K_{\Gp})[\phi ]\cdot \# A'(K_{\Gp})/A'_{0}(K_{\Gp})}
{\# A(K_{\Gp})/A_{0}(K_{\Gp})}\]
when $\Gp$ is finite.
\end{thm}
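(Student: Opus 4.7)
The plan is to view the theorem as a consolidation: the place-by-place assertions are essentially quotations of the lemmas already proved in section~\ref{localcomp}, and the substantive new content is the pair of global injections together with the equality $S^{\phi}(K_\Gp,A)=C^{\phi}(K_\Gp,A)=I^{\phi}(K_\Gp,A)\cong A(K_\Gp)[\phi]$ at finite primes not dividing the conductor of $A$ or the degree of $\phi$. The hypothesis $H^{1}(G,A[\phi])=H^{2}(G,A[\phi])=0$ for all $G\subseteq\Gal(L/K)$ is used at the outset so that the five-term inflation-restriction sequences identify $H^{1}(K,A[\phi])$ with $\Hom_{G(L/K)}(\Gal(\overline{L}/L),A[\phi])$ and $H^{1}(K_\Gp,A[\phi])$ with $\Hom_{G(LK_\Gp/K_\Gp)}(\Gal(\overline{LK_\Gp}/LK_\Gp),A[\phi])$ (the $H^{2}$ vanishing supplying surjectivity in these restriction maps); then $S^{\phi}(K,A)$ and $C^{\phi}(K,A)$ sit in a common ambient group, so $I^{\phi}(K,A)$ is literally a subgroup-theoretic intersection, and similarly at each place.

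For the first injection I would argue as follows. The restriction map $S^{\phi}(K,A)\to\prod_{\Gp}S^{\phi}(K_\Gp,A)$ carries $I^{\phi}(K,A)$ into $\prod_{\Gp}I^{\phi}(K_\Gp,A)$ by construction, so the induced map on quotients is well defined; to see injectivity, fix $\xi\in S^{\phi}(K,A)$ whose local image lies in $I^{\phi}(K_\Gp,A)$ for every $\Gp$. Since $I^{\phi}(K_\Gp,A)\subseteq C^{\phi}(K_\Gp,A)$, the restriction $\xi_{\Gp}$ factors through an unramified extension of $LK_\Gp$, so $\xi$ is unramified at every prime of $L$ lying over $\Gp$; letting $\Gp$ vary, $\xi$ is unramified at every prime of $L$ and therefore lies in $C^{\phi}(K,A)$, hence in $I^{\phi}(K,A)$. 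The second injection is the mirror image: an element of $C^{\phi}(K,A)$ whose local image lies in $S^{\phi}(K_\Gp,A)$ for every $\Gp$ lies in the global Selmer group by the definition of the latter as the kernel of the map to $\prod_{\Gp}H^{1}(K_\Gp,A(\overline{K_\Gp}))[\phi]$.

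The pointwise assertions are for the most part direct quotations: $C^{\phi}(K_\Gp,A)\cong A(K_\Gp)[\phi]$ at finite primes is lemma~\ref{le:unr hom}; the triviality of $C^{\phi}(K_\Gp,A)$ and $I^{\phi}(K_\Gp,A)$ at archimedean primes, together with the description of $S^{\phi}(K_\Gp,A)$ there, is lemma~\ref{le:arch}; the intersection-quotient formula for $I^{\phi}(K_\Gp,A)$ at finite primes is theorem~\ref{thm:mainthm}; and the order formula for $S^{\phi}(K_\Gp,A)$ at finite primes is lemma~\ref{size}.

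The main obstacle, and the one step that requires genuine assembly, is the claim that at a finite prime $\Gp$ of good reduction with residue characteristic $p$ not dividing $\deg\phi$, all four groups $S^{\phi}(K_\Gp,A)$, $C^{\phi}(K_\Gp,A)$, $I^{\phi}(K_\Gp,A)$ and $A(K_\Gp)[\phi]$ coincide. I plan a counting argument. Good reduction gives $A(K_\Gp)=A_{0}(K_\Gp)$ and $A'(K_\Gp)=A'_{0}(K_\Gp)$, while $p\nmid\deg\phi$ makes $\phi$ \'{e}tale at the origin, the formal group $A_{1}(K_\Gp)$ being pro-$p$, so $|\phi'(0)|=1$; lemma~\ref{size} then yields $\#S^{\phi}(K_\Gp,A)=\#A(K_\Gp)[\phi]$, and lemma~\ref{le:unr hom} gives $\#C^{\phi}(K_\Gp,A)=\#A(K_\Gp)[\phi]$ as well. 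For $I^{\phi}(K_\Gp,A)$, the N\'{e}ron--Ogg--Shafarevich criterion makes $L/K$ unramified at $\Gp$, so $L'=L$ and $A(M)=A_{0}(M)$; by lemma~\ref{le:ns coh tri} this group has trivial Galois cohomology, so the intersection quotient of theorem~\ref{thm:mainthm} collapses to $H^{1}(\Gal(M/K),A(M)[\phi])$, and the Herbrand-quotient identity $|\hat H^{0}|=|H^{1}|$ for finite modules over the cyclic group $\Gal(M/K)$, together with lemma~\ref{ker norm}, makes this order equal to $\#A(K_\Gp)[\phi]$. Since $I^{\phi}(K_\Gp,A)\subseteq S^{\phi}(K_\Gp,A)\cap C^{\phi}(K_\Gp,A)$ and all three orders coincide, the three subgroups of the common Hom group are equal.
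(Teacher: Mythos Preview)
Your proof is correct and follows essentially the same route as the paper's. The paper dispatches the two injections with ``clear from the definitions,'' while you spell out the argument (including the role of the $H^{2}$ vanishing in making the restriction $H^{1}(K,A[\phi])\to\Hom_{G(L/K)}(\Gal(\overline L/L),A[\phi])$ surjective, which is needed for the second injection); and at the good primes the paper invokes lemma~\ref{le:EmodE0} and the remark after theorem~\ref{thm:mainthm} to get $\#I^{\phi}(K_\Gp,A)=\#A(K_\Gp)[\phi]$, whereas you go back one step to lemma~\ref{le:ns coh tri} and compute $\#H^{1}(\Gal(M_\Gp/K_\Gp),A(M_\Gp)[\phi])$ via the Herbrand quotient---these are the same computation, differently packaged.
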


\begin{proof}
The injections are clear from the definitions of the groups involved.
What happens at the infinite primes follows from lemma~\ref{le:arch}.
Let $\Gp$ be a finite prime. From
lemma~\ref{le:unr hom} we see that the group $C^{\phi}(K_{\Gp},A)$
is isomorphic to $A(K_{\Gp})[\phi ]$. Since $H^{1}(G,A[\phi])=0$ for all
$G\subseteq \Gal (L/K)$,
the group $I^{\phi}(K_{\Gp},A)$ is defined and
is isomorphic to the intersection quotient
from
theorem~\ref{thm:mainthm}. The order of the group $S^{\phi}(K_{\Gp},A)$
comes from lemma~\ref{size}.

Let $\Gp$ be a finite prime that does not divide the conductor of $A$
or the degree of $\phi$.
Since $\Gp$ does not divide the conductor
we have $A(M_{\Gp})=A_{0}(M_{\Gp})$.
So
from lemma~\ref{le:EmodE0}, all of the elements of
$A(M_{\Gp})[\phi ]$ are in $(\tau_{\Gp}-1)(A(M_{\Gp}))$.  Thus the
group $I^{\phi}(K_{\Gp},A)$ has the same order as $A(K_{\Gp})[\phi ]$
and so is equal to $C^{\phi}(K_{\Gp},A)$.  We also have $A(K_{\Gp})=
A_{0}(K_{\Gp})$
and $A'(K_{\Gp})=A_{0}(K_{\Gp})$. In addition, since $\Gp$ does not divide the
degree of $\phi$, we have $|\phi'(0)|=1$.  Therefore, the order of
$S^{\phi}(K_{\Gp},A)$ will be the same as the order of
$A(K_{\Gp})[\phi ]$. So the image of $S^{\phi}(K_{\Gp},A)$ in
$\Hom_{G(LK_{\Gp}/K_{\Gp})}(\Gal
(\overline{LK_{\Gp}}/LK_{\Gp}),A[\phi ])$ is the same as
$C^{\phi}(K_{\Gp},A)$, which is isomorphic to $A(K_{\Gp})[\phi ]$.
\end{proof}

For elliptic curves,
all of the local groups are easy to compute.
Use theorem~\ref{thm:mainthm},
lemma~\ref{le:EmodE0} and theorem~\ref{thm:cases}
to compute the order of the intersection quotient.
Theorem~\ref{thm:cases} requires a minimal Weierstrass equation at
the prime $\Gp$ which we may not have, but that does not matter. We
can just replace our Weierstrass equation with a minimal one at $\Gp$
so as to be able to apply
theorem~\ref{thm:cases} since the order of the intersection does not
depend on the embedding.
Use Tate's algorithm to compute the orders of
$E'(K_{\Gp})/E'_{0}(K_{\Gp})$ and $E(K_{\Gp})/E_{0}(K_{\Gp})$.
As noted after proving lemma~\ref{size}, the quantity
$|\phi '(0)|^{-1}$ can be computed easily using \cite{Ve} and \cite{Si},
chapter IV.
In the infinite case use proposition~\ref{pr:archec} to compute the size of
$E'(K_{\Gp})/\phi E(K_{\Gp})$.
The group $E(K_{\Gp})[\phi ]$ is trivial to compute also.

\section{The 2-map}
\label{2-map}

The 2-map has been the most studied because it is usually the easiest isogeny
to work with that is always defined over the field of definition of
an abelian variety. In addition, one does not have to work with two different
abelian varieties.
For these reasons,
2-Selmer groups are frequently used when trying to compute
the free ${\bf Z}$-rank of a Mordell-Weil group.
Birch and Swinnerton-Dyer \cite{BSD}
created an algorithm for computing 2-Selmer groups for elliptic curves
and this has been implemented as a computer program by John Cremona.
Recently, algorithms have been introduced for computing 2-Selmer groups
for the Jacobians of hyperelliptic curves that work in special cases.
For hyperelliptic curves of genus 2 there are the algorithms of Flynn
\cite{Fl}, and of Gordon and Grant \cite{GG},
and for arbitrary genus
see \cite{Sc}. The algorithms of Flynn and of Gordon and Grant have also been
implemented as computer programs.
As mentioned in the introduction,
the 2-map has
been studied in connection with its relation to the 2-rank
of the class group of cubic extensions of ${\bf Q}$.
We will see why in subsection~\ref{1stsub}

In this section we first show how the group $C^{2}(K,A)$ is related to
the 2-parts of class groups when $A$ is the Jacobian of an elliptic
or a hyperelliptic
curve.
We then present some examples of computing
the local intersection $I^{2}(K_{\Gp},E)$
for elliptic curves. Then we come
up with simpler upper bounds for the sizes of $S^{2}(K,A)/I^{2}(K,A)$
and $C^{2}(K,A)/I^{2}(K,A)$.
Lastly we do explicit computations for three examples of Jacobians of
elliptic and hyperelliptic curves.

\subsection {The group $C^{2}(K,A)$ for the Jacobians of elliptic and
hyperelliptic curves}
\label{1stsub}
Let $C$ be the projective
elliptic or hyperelliptic curve defined over $K$, a
number field, by the equation $Y^{2}=f$ where $f$ is a separable
polynomial of odd degree $d\geq 3$.
Let $J$ be the Jacobian of the normalization
of $C$ and let $\infty$ denote the point at infinity of the normalization of
$C$. The genus of the curve $C$ and the dimension of $J$ are both $(d-1)/2$.
Let $L=K(J[2])$, and let $G_{2}$ be a 2-Sylow subgroup of $\Gal
(L/K)$.  We will see below that when
$H^{1}(G_{2},J[2])$ is trivial that we have
\[
H^{1}(K, J[2])\cong  \Hom\nolimits_{\Gal (L/K)}(\Gal (\overline{L}/L),J[2]).\]
By abuse of notation we will denote the preimage in $H^{1}(K,J[2])$ of
$C^{2}(K,J)$ also by $C^{2}(K,J)$.

Let $F$ be the algebra defined by $F=K[T]/f(T)$ and let $\overline{F}$
be $\overline{K}[T]/f(T)$. In \cite{Sc} the author presented an isomorphism of
$H^{1}(K,J[2])$ and the kernel of the norm from $F^{\ast}/F^{\ast 2}$
to $K^{\ast}/K^{\ast 2}$.
Any subset with $d-1$ elements
of the set of $(\alpha_{i},0)-\infty$, where $f(\alpha_{i})=0$,
forms a basis for $J[2]$.
Let $w$ be the Weil-pairing of an element of
$J[2]$ with all $d$ of the points $(\alpha_{i},0)-\infty$;
then $w$ is a map from $J[2]$ to $\mu_{2}(\overline{F})$.
The map $w$ induces an injective map from $H^{1}(K,J[2])$ to $H^{1}(K,
\mu_{2}(\overline{F}))$. The latter cohomology group is isomorphic to
$F^{\ast}/F^{\ast 2}$ by a Kummer map (see \cite{Se}, p.\ 152).
Composing the Kummer isomorphism
and the Weil-pairing induces the desired isomorphism.

Let us define
unramified elements in the kernel of the norm from $F^{\ast}/F^{\ast 2}$ to
$K^{\ast}/K^{\ast 2}$.
We can write $F\cong F_{1}\times\ldots\times F_{r}$ where each
$F_{i}$ is a field and we have
\[
F^{\ast}/F^{\ast 2}\cong F_{1}^{\ast}/F_{1}^{\ast 2}\times\ldots\times
F_{r}^{\ast}/F_{r}^{\ast 2}.\]
For an arbitrary number field $W$, let us define the unramified elements
of $W^{\ast}/W^{\ast 2}$ to be those which have the property that if
one adjoins the square root of a representative to $W$, one gets
a totally unramified extension of $W$.
We will say an element of $F^{\ast}/F^{\ast 2}$ is unramified if its
image in each $F_{i}^{\ast}/F_{i}^{\ast 2}$ is unramified.
The group $C^{2}(K,J)$ injects into
the kernel of the
norm from $F^{\ast}/F^{\ast 2}$ to $K^{\ast}/K^{\ast 2}$. We will show
that its image is the same as the subgroup of unramified elements.

\begin{lemma}
\label{tfae}
Let $f$ be a separable polynomial of odd degree $d\geq 3$, defined over $K$
a field of characteristic 0. Let
$C$ be the curve defined by $Y^{2}=f$,
and let $J$ be the Jacobian of the normalization of $C$. Let $L=K(J[2])$ and
$G_{2}$ be a 2-Sylow subgroup of $\Gal (L/K)$.
Then the following are
equivalent.
\begin{enumerate}
\item The group $H^{1}(G_{2},J[2])$ is trivial.
\item The group $J[2]$ is cohomologically trivial as a $G_{2}$-module.
\item The group $J[2]$ is cohomologically trivial as a $\Gal (L/K)$-module.
\item Let $W$ be the field
fixed by $G_{2}$. In $W[X]$, the polynomial
$f$ is the product of one linear factor and $(d-1)/\# G_{2}$ irreducible
factors, each of degree $\# G_{2}$.
\item Let $\alpha_{i}$ for
$1\leq i \leq d$ be the roots of $f$.
For $i$ not equal to $j$,
the degree of $L$ over $K(\alpha_{i},\alpha_{j})$ is odd.
\end{enumerate}
\end{lemma}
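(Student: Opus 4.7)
The plan is to prove the equivalences in the order $(1) \Leftrightarrow (2) \Leftrightarrow (3)$ (using general cohomology of $p$-groups), then $(2) \Leftrightarrow (4)$ (using an explicit description of $J[2]$ as an ${\bf F}_{2}[G_{2}]$-module), and finally $(4) \Leftrightarrow (5)$ (using Sylow arguments on root stabilisers).

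For the first chain, I would invoke Tate's theorem: for a finite $p$-group $G$ and a $p$-primary $G$-module $M$, vanishing of $H^{1}(G,M)$ forces $M$ to be cohomologically trivial, i.e., $H^{i}(H,M)=0$ for every $H\leq G$ and every $i\geq 1$. Since $G_{2}$ is a $2$-group and $J[2]$ is $2$-torsion, this gives $(1)\Leftrightarrow(2)$. For $(2)\Leftrightarrow(3)$, the implication $(3)\Rightarrow(2)$ is immediate because subgroups of $G_{2}$ are subgroups of $\Gal(L/K)$; the converse uses that a $2$-Sylow $H_{2}$ of any $H\leq \Gal(L/K)$ is conjugate in $\Gal(L/K)$ into $G_{2}$, so $H^{i}(H_{2},J[2])=0$ by (2), and the standard injection $H^{i}(H,J[2])\hookrightarrow H^{i}(H_{2},J[2])$ for $2$-primary modules (coming from ${\rm cor}\circ{\rm res}=[H:H_{2}]$, an odd integer) finishes the argument.

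For $(2)\Leftrightarrow(4)$, the first step is to identify $J[2]$ as a Galois module. Because $d$ is odd and ${\rm div}(Y)=\sum_{i}((\alpha_{i},0)-\infty)$ is principal, the classes $e_{i}=(\alpha_{i},0)-\infty$ generate $J[2]$ with the single relation $\sum_{i}e_{i}=0$, so $J[2]\cong {\bf F}_{2}[S]/\langle\sigma\rangle$, where $S=\{\alpha_{1},\dots,\alpha_{d}\}$ and $\sigma=\sum_{s\in S}s$. Since $d$ is odd, $\sigma$ is not in the augmentation ideal, and $J[2]$ is isomorphic to the augmentation kernel $I=\ker({\bf F}_{2}[S]\to{\bf F}_{2})$; moreover the sequence $0\to I\to {\bf F}_{2}[S]\to {\bf F}_{2}\to 0$ splits as $G_{2}$-modules, because $G_{2}$ has a fixed point on $S$ (orbit sizes are powers of $2$ summing to the odd number $d$). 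Decomposing $S$ into $G_{2}$-orbits $O_{j}$ with stabilisers $H_{j}$, one has ${\bf F}_{2}[S]\cong\bigoplus_{j}{\bf F}_{2}[G_{2}/H_{j}]$. Since ${\bf F}_{2}[G_{2}]$ is a local Frobenius algebra (in characteristic $2$ the only simple $G_{2}$-module is trivial), a $2$-primary $G_{2}$-module is cohomologically trivial iff it is free over ${\bf F}_{2}[G_{2}]$, and ${\bf F}_{2}[G_{2}/H]$ is free iff $H=1$ (from a dimension count, or from Shapiro's lemma giving $H^{1}(G_{2},{\bf F}_{2}[G_{2}/H])=\Hom(H,{\bf F}_{2})$). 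Applying Krull--Schmidt to $I\oplus {\bf F}_{2}\cong\bigoplus_{j}{\bf F}_{2}[G_{2}/H_{j}]$ yields that $I$ is free iff exactly one $H_{j}$ equals $G_{2}$ and every other $H_{j}$ is trivial, which is exactly condition (4).

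For $(4)\Leftrightarrow(5)$, let $G_{i}={\rm Stab}_{\Gal(L/K)}(\alpha_{i})$; then $\Gal(L/K(\alpha_{i},\alpha_{j}))=G_{i}\cap G_{j}$, so (5) reads: $|G_{i}\cap G_{j}|$ is odd whenever $i\neq j$. Given (4), any non-trivial $2$-subgroup of $G_{i}\cap G_{j}$ would conjugate into $G_{2}$ and fix two distinct roots there, contradicting uniqueness of the $G_{2}$-fixed point. Conversely, assuming (5), the $G_{2}$-stabiliser $H=G_{2}\cap G_{i}$ of any root $\alpha_{i}$ must be either trivial or all of $G_{2}$: if $1\neq H\subsetneq G_{2}$, then (5) forces $H$ to fix only $\alpha_{i}$ among the roots, so any conjugate $gHg^{-1}$ with $g\in G_{2}\setminus H$ fixes a different root, whence $H\cap gHg^{-1}=1$; but nilpotency of the $2$-group $G_{2}$ gives an element $n$ in the normaliser of $H$ outside $H$, yielding the contradiction $H=H\cap nHn^{-1}=1$. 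The main obstacle I anticipate is the Krull--Schmidt step in $(2)\Leftrightarrow(4)$, which requires both the explicit geometric identification of $J[2]$ and the non-trivial equivalence of projectivity and freeness over ${\bf F}_{2}[G_{2}]$.
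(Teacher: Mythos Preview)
Your argument is correct, and it shares with the paper the key structural step: identifying $J[2]$ with the augmentation kernel $I$ and using the $G_{2}$-splitting $I\oplus{\bf F}_{2}\cong{\bf F}_{2}[S]\cong\bigoplus_{j}{\bf F}_{2}[G_{2}/H_{j}]$ (the fixed root supplies the section). The differences are in how this decomposition is exploited and in the route for $(5)\Rightarrow(4)$.

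For the bridge to condition (4), the paper stays at the level of $H^{1}$: Shapiro's lemma turns the split sequence into $H^{1}(G_{2},J[2])\oplus H^{1}(G_{2},{\bf F}_{2})\cong\bigoplus_{j}H^{1}(H_{j},{\bf F}_{2})=\bigoplus_{j}\Hom(H_{j},{\bf F}_{2})$, and since one $H_{j}$ already equals $G_{2}$, condition (1) is exactly the vanishing of the remaining $\Hom(H_{j},{\bf F}_{2})$, i.e.\ triviality of those $H_{j}$. This bypasses both Krull--Schmidt and the equivalence ``cohomologically trivial $\Leftrightarrow$ free over ${\bf F}_{2}[G_{2}]$'' that you invoke. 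Your approach is valid and arguably more conceptual, but to make the Krull--Schmidt matching rigorous you should record that each ${\bf F}_{2}[G_{2}/H_{j}]$ is indecomposable (its $G_{2}$-invariants are one-dimensional, and over a $2$-group every nonzero summand has nonzero invariants). For $(5)\Rightarrow(4)$, your normaliser argument in the nilpotent group $G_{2}$ works, but the paper's is shorter: $[L:W(\alpha_{i},\alpha_{j})]$ divides both the odd number $[L:K(\alpha_{i},\alpha_{j})]$ and the $2$-power $[L:W]=\#G_{2}$, hence equals $1$; so every pair of distinct roots generates $L$ over $W$, which forces exactly one $W$-rational root and all other roots of degree $\#G_{2}$ over $W$.
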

\begin{proof}
From \cite{AW}, theorem 6, $(1)$ is equivalent to $(2)$ and from
\cite{AW}, proposition 8, $(2)$
is equivalent to
$(3)$. To prove that $(4)$ implies $(5)$ we note that $(4)$
implies that if $\sigma\in G_{2}$ fixes two different $\alpha_{i}$'s
then $\sigma$ is the identity. Let $i$ not equal $j$ and let
$\hat{G}=\Gal (L/K(\alpha_{i},\alpha_{j}))$. Let $\hat{H}$ be a
2-Sylow subgroup in $\hat{G}$. The group $\hat{H}$ can be extended
to a 2-Sylow subgroup of $\Gal (L/K)$, which we will call $G_{2}$.
Elements of $\hat{H}$ are in $G_{2}$ and fix two different $\alpha_{i}$'s
so $\hat{H}$ is trivial. Thus the degree of $L$ over $K(\alpha_{i},
\alpha_{j})$ is odd.
To prove that $(5)$ implies $(4)$ we note that $(5)$ implies that
$L$ is of odd degree over $W(\alpha_{i},\alpha_{j})$ when $i$ is not equal
to $j$. So $L$ equals $W(\alpha_{i},\alpha_{j})$. Since the degree of $f$
is odd, one of its roots must be in $W$ and each of the rest
generates $L$, which
is $(4)$.

Now let us prove that $(1)$ and $(4)$ are equivalent.
Let $X$ be the $G_{2}$-set
of roots
of $f$. Let $G_{2}$ act trivally on ${\bf F}_{2}$ and let ${\bf F}_{2}^{X}$
be the $G_{2}$-maps from $X$ to
${\bf F}_{2}$.
Recall that
any subset of the set of $(\alpha_{i},0)-\infty$
with $d-1$ elements forms a basis for $J[2]$.
It is easy to check that
the kernel of the norm from ${\bf F}_{2}^{X}$
to ${\bf F}_{2}$ is isomorphic as a $G_{2}$-module to $J[2]$. In addition,
since $d$ is odd, the composition of
the diagonal embedding of ${\bf F}_{2}$ in ${\bf F}_{2}^{X}$ with the
norm is the identity map so the exact sequence \[
0\rightarrow J[2]\rightarrow {\bf F}_{2}^{X}\stackrel{\rm norm}{\rightarrow}
{\bf F}_{2}\rightarrow 0\] splits.
Therefore the exact sequence \[
0\rightarrow H^{1}(G_{2},J[2])\rightarrow H^{1}(G_{2},{\bf F}_{2}^{X})
\rightarrow H^{1}(G_{2},{\bf F}_{2})\rightarrow 0\] also splits.
Because these groups are finite,
condition $(1)$ is equivalent to $H^{1}(G_{2},{\bf F}_{2}^{X})\cong
H^{1}(G_{2},
{\bf F}_{2})$.
Now $X$ is isomorphic as a $G_{2}$-set to the disjoint union of its orbits
and so to $\amalg
G_{2}/H_{j}$ for some set of subgroups $H_{j}$ of
$G_{2}$. Thus \[
{\bf F}_{2}^{X}\cong \bigoplus\limits_{j} {\bf F}_{2}^{G_{2}/H_{j}}\]
as $G_{2}$-modules. So we have
\[
H^{1}(G_{2},{\bf F}_{2}^{X})\cong H^{1}(G_{2},{\bf F}_{2}^{G_{2}/H_{1}})\oplus
\ldots \oplus H^{1}(G_{2},{\bf F}_{2}^{G_{2}/H_{n}})\] for some $n$.
Now from Shapiro's lemma (see \cite{AW}, proposition 2),
condition $(1)$ is equivalent to
\[
H^{1}(G_{2},{\bf F}_{2})\cong
H^{1}(H_{1},{\bf F}_{2})\oplus\ldots\oplus H^{1}(H_{n},{\bf F}_{2}).\]

Now $X$ has odd cardinality and $G_{2}$ is a 2-Sylow subgroup, so there
is an orbit of cardinality one. Without loss of generality we will
let $H_{1}=G_{2}$. So $(1)$ is equivalent to all of the groups
$H^{1}(H_{j},{\bf F}_{2})$ being trivial for
$j>1$.
Since $H_{j}$ acts trivially on ${\bf F}_{2}$ we know that
such cohomology groups are trivial only when $H_{j}$ is trivial.
So $(1)$ is true if and only if
$X$ has one orbit with a single element and otherwise consists of
orbits with the same cardinality as $G_{2}$.
In other words, $f$ is the product of one
linear factor and $(d-1)/\# G_{2}$ irreducible factors each of
degree $\# G_{2}$, which is condition $(4)$.
\end{proof}

Condition $(5)$ is always true for elliptic curves, and for curves of the
form $y^{2}=x^{p}+a$ where $p$ is an odd prime.

\begin{lemma}
\label{unr el}
Let $W$ be a finite extension of $K$, fields of characteristic 0.
The norm from $W^{\ast}/W^{\ast 2}$ to $K^{\ast}/K^{\ast 2}$ sends
unramified elements to unramified elements.
\end{lemma}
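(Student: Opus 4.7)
The plan is to reduce the statement to a purely local assertion and then handle it by splitting through the maximal unramified subextension. For each prime $\Gp$ of $K$, the decomposition $W\otimes_K K_\Gp\cong\prod_{\Gq\mid\Gp}W_\Gq$ gives $N_{W/K}(w)=\prod_{\Gq\mid\Gp}N_{W_\Gq/K_\Gp}(w_\Gq)$ in $K_\Gp^*$. The unramified classes in $K_\Gp^*/K_\Gp^{*2}$ form a subgroup, namely the image under inflation of $H^1(\Gal(K_\Gp^{\rm unr}/K_\Gp),\mu_2)$, so it suffices to prove the local version: if $W/K$ is a finite extension of local fields of characteristic zero and $w\in W^*$ satisfies $W(\sqrt{w})/W$ unramified, then $K(\sqrt{N_{W/K}(w)})/K$ is unramified.

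Using transitivity $N_{W/K}=N_{W_0/K}\circ N_{W/W_0}$, where $W_0=W\cap K^{\rm unr}$ is the maximal unramified subextension, I would split the local case into a totally ramified step and an unramified step. For the totally ramified step $W/W_0$, by Kummer theory the unique unramified quadratic extension of $W_0$ has the form $W_0(\sqrt{u})$ for some $u\in W_0^*$. Since $W\cap W_0(\sqrt{u})=W_0$, the hypothesis forces $W(\sqrt{w})\subseteq W(\sqrt{u})$, so $\sqrt{w}=a+b\sqrt{u}$ for some $a,b\in W$. Squaring and using $w\in W$ gives $2ab=0$, so either $w=a^2$ or $w=b^2u$. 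Since $u\in W_0$, one finds $N_{W/W_0}(w)\equiv 1$ or $u^{[W:W_0]}$ modulo $W_0^{*2}$, and in either case $W_0(\sqrt{N_{W/W_0}(w)})\subseteq W_0(\sqrt{u})$ is unramified over $W_0$.

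For the unramified step $W_0/K$: since $W_0\subseteq K^{\rm unr}$, any $v\in W_0^*$ with $W_0(\sqrt{v})/W_0$ unramified has $\sqrt{v}\in W_0^{\rm unr}=K^{\rm unr}$. Each $K$-embedding $\sigma\colon W_0\to\overline{K}$ extends to an automorphism of $\overline{K}$ fixing $K$, and every such automorphism stabilizes $K^{\rm unr}$; hence $\sigma(\sqrt{v})\in K^{\rm unr}$, and $\sqrt{N_{W_0/K}(v)}=\prod_\sigma\sigma(\sqrt{v})\in K^{\rm unr}$, so $K(\sqrt{N_{W_0/K}(v)})/K$ is unramified. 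Feeding $v=N_{W/W_0}(w)$ into this combines the two steps to complete the proof.

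The main subtlety is the totally ramified step in residue characteristic two, where unramifiedness in $K^*/K^{*2}$ is not merely an even-valuation condition. The argument above circumvents this uniformly by exploiting the single Kummer generator $u$ of the unique unramified quadratic extension of $W_0$, rather than separating cases by residue characteristic.
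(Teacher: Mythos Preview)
Your proof is correct, and it takes a genuinely different route from the paper's.

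The paper argues globally and directly: it passes to the normal closure $E$ of $W_1(\sqrt{\alpha_1})$ over $K$, picks an arbitrary inertia element $\sigma\in\Gal(E/K)$, and shows $\sigma$ fixes $\sqrt{\alpha_1\cdots\alpha_s}$ by a parity count. On each $\langle\sigma\rangle$-orbit $\{\alpha_1,\ldots,\alpha_j\}$ one tracks the signs in $\sigma(\sqrt{\alpha_i})=\pm\sqrt{\alpha_{i+1}}$; if the number of minus signs were odd, then $\sigma^j$ would lie in an inertia group for $W_i(\sqrt{\alpha_i})/W_i$ yet move $\sqrt{\alpha_i}$, contradicting unramifiedness. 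Your argument instead localizes first, then factors each local extension through its maximal unramified subextension $W_0$, handling the totally ramified step via the single Kummer generator $u$ of the unramified quadratic extension of $W_0$ and the unramified step by the Galois-stability of $K^{\rm unr}$. The paper's approach is a clean, self-contained Galois-theoretic trick that never leaves the number field; yours is more structural and makes transparent why residue characteristic~$2$ poses no special difficulty, since the explicit generator $u$ absorbs all the wild behaviour uniformly. One small point you glossed over: the archimedean places need a separate (trivial) word, since there is no ``unramified quadratic extension'' of $\mathbf{R}$ to invoke, but the cases $\mathbf{R}/\mathbf{R}$, $\mathbf{C}/\mathbf{R}$, $\mathbf{C}/\mathbf{C}$ are immediate by inspection.
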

\begin{proof}
Let $W_{1}$ be any finite extension of $K$, and let
$W_{i}$, for $i=1$ to $s$, be the $s$
conjugates of $W_{1}$ over $K$.
Let $\alpha_{1}\in W_{1}$ and
let $\alpha_{i}$ be the image of $\alpha_{1}$
in each $W_{i}$. Let $W_{1}(\sqrt{\alpha_{1}})$ be a totally
unramified extension
of $W_{1}$. We will abbreviate totally unramified with unramified.
Then $W_{i}(\sqrt{\alpha_{i}})$ is an unramified extension of
$W_{i}$ for all $i$. Let $E$ be the normal closure of
$W_{1}(\sqrt{\alpha_{1}})$ over $K$. We want to prove that
$K(\sqrt{N_{W_{1}/K}(\alpha_{1})})$
is an unramified extension of
$K$.
Let $I\subseteq\Gal (E/K)$ be an inertia group and pick
$\sigma\in I$.
We want to prove that $\sigma$ fixes
$\sqrt{N_{W_{1}/K}(\alpha_{1})}=
\sqrt{\alpha_{1}\cdot\ldots
\cdot\alpha_{s}}$. Pick an orbit of the set of $\alpha_{i}$'s under
the action of the group generated by $\sigma$ and number them
$\alpha_{1},\ldots, \alpha_{j}$ where $\sigma(\alpha_{i})=\alpha_{i+1}$
with the subscripts computed modulo $j$.
We have $\sigma(\sqrt{\alpha_{i}})=
\pm \sqrt{\alpha_{i+1}}$ and if we compute these for $i$ from 1 to $j$
then the
number of minus signs must be even. If it were odd, then $\sigma^{j}
(\sqrt{\alpha_{1}})=-\sqrt{\alpha_{1}}$ and so $\sigma^{j}(\alpha_{1})=
\alpha_{1}$ and so $\sigma^{j}$ would be in $\Gal (E/W_{i})$.
Since $W_{i}(\sqrt{\alpha_{i}})$ is an unramified extension of $W_{i}$ we
know that $\sigma^{j}\in I$ would also fix $\sqrt{\alpha_{i}}$,
a contradiction. Thus there are an even number of minus signs and
so $\sigma$ fixes the square root of the product of the elements of each
orbit and so
$\sigma$ fixes $\sqrt{\alpha_{1}\cdot\ldots\cdot\alpha_{s}}$.
\end{proof}
If the element $a$ of $F$ has
image $(a_{1},\ldots ,a_{r})$ in $\prod F_{i}$, then $N_{F/K}(a)$ is the same
as the product of the $N_{F_{i}/K}(a_{i})$.
We see that the norm from $F^{\ast}/F^{\ast 2}
=\prod F_{i}^{\ast}/F_{i}^{\ast 2}$ to $K^{\ast}/K^{\ast 2}$
takes unramified elements to unramified elements.

\begin{thm}
\label{kernorm}
Let $f$ be a separable polynomial of odd degree $d\geq 3$, defined over $K$,
a number field. Let
$C$ be the curve defined by $Y^{2}=f$,
let $J$ be the Jacobian of the normalization of $C$ and let
$F=K[T]/f(T)$. Let $L=K(J[2])$ and
$G_{2}$ be a 2-Sylow subgroup of $\Gal (L/K)$.
If the group $H^{1}(G_{2},J[2])$ is
trivial then
the group $C^{2}(K,J)$ is isomorphic to
the kernel of the norm from the unramified elements in $F^{\ast}/F^{\ast 2}$
to the unramified elements in $K^{\ast}/K^{\ast 2}$.
\end{thm}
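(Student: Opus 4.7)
The plan is to use the isomorphism $\psi: H^{1}(K,J[2])\to\ker(N:F^{*}/F^{*2}\to K^{*}/K^{*2})$ constructed above (Weil pairing composed with Kummer theory) and show that under $\psi$ the subgroup $C^{2}(K,J)$ is identified with the subgroup of unramified elements in $\ker N$. The hypothesis $H^{1}(G_{2},J[2])=0$, by lemma~\ref{tfae}, implies $H^{1}(G,J[2])=0$ for every subgroup $G\subseteq\Gal(L/K)$, so inflation--restriction gives an injection $H^{1}(K,J[2])\hookrightarrow\Hom_{\Gal(L/K)}(\Gal(\overline{L}/L),J[2])$; membership of $c$ in $C^{2}(K,J)$ is then the condition that the restriction of $c$ to $H^{1}(L,J[2])$ is unramified at every prime of $L$.

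Under the Weil--pairing/Kummer composition, a class $c$ with $\psi(c)=a=(a_{i})\in F^{*}/F^{*2}$ restricts to the tuple $(a_{i}(\alpha))_{\alpha}\in (L^{*}/L^{*2})^{X}$, where $X$ is the set of roots of $f$ and $a_{i}(\alpha)$ denotes the image of $a_{i}$ under the embedding $F_{i}\hookrightarrow L$ sending $\alpha_{i}$ to $\alpha$. Thus $c\in C^{2}(K,J)$ precisely when $L(\sqrt{a_{i}(\alpha)})/L$ is unramified at every prime of $L$, for every $i$ and every root $\alpha$ of $f_{i}$.

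One direction is straightforward: if each $a_{i}$ is unramified in $F_{i}^{*}/F_{i}^{*2}$, then $F_{i}(\sqrt{a_{i}})/F_{i}$ is everywhere unramified. Since $L\supseteq F_{i}$, the compositum $L(\sqrt{a_{i}})/L$ remains unramified at every prime of $L$; applying any $\sigma\in\Gal(L/K)$ gives the same conclusion for each root $\alpha$ in the Galois orbit of $\alpha_{i}$. Therefore $\psi^{-1}(a)\in C^{2}(K,J)$.

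The hard direction, and the main obstacle, is the converse: deducing unramifiedness of each $F_{i}(\sqrt{a_{i}})/F_{i}$ from unramifiedness of $L(\sqrt{a_{i}})/L$. This is not automatic, since the ramification index of $L/F_{i}$ at an intermediate prime could a priori be even. Here I would exploit the cohomological triviality hypothesis through condition (5) of lemma~\ref{tfae}, which asserts that $[L:K(\alpha_{i},\alpha_{j})]$ is odd for $i\neq j$; a Sylow analysis of the decomposition and inertia groups of $L/K$ then forces the 2-part of the ramification index of $L/F_{i}$ to be trivial at every prime, so that the local Kummer condition descends from $L$ to $F_{i}$. Combining the two directions, $\psi(C^{2}(K,J))$ coincides with the unramified elements of $\ker N$, and lemma~\ref{unr el}, ensuring that norms of unramified elements remain unramified, confirms that this is precisely the kernel of the norm map between the unramified subgroups.
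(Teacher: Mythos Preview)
Your setup and the easy direction are fine, but the hard direction contains a genuine gap. The assertion that ``the 2-part of the ramification index of $L/F_{i}$ is trivial at every prime'' is false under the hypotheses. Take the simplest case $\Gal(L/K)=S_{3}$ acting on the three roots, so condition~(5) of lemma~\ref{tfae} certainly holds. Here $F=F_{1}=K(\alpha_{1})$ and $\Gal(L/F_{1})=\langle(23)\rangle$ has order~$2$. Any prime $\mathfrak{P}$ of $L$ whose inertia group in $\Gal(L/K)$ contains $(23)$ then has $e(\mathfrak{P}/\mathfrak{P}\cap F_{1})=2$; for instance, with $f=X^{3}-2$ the unique prime above~$3$ is totally ramified in $L/\mathbf{Q}$ and hence ramified in $L/F_{1}$. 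At such a prime one cannot descend the unramifiedness of $L(\sqrt{a})/L$ to $F_{1}(\sqrt{a})/F_{1}$ by your parity argument, because the valuation of $a$ on $L$ is automatically even regardless of its parity on $F_{1}$. Condition~(5) controls the stabiliser of a \emph{pair} of roots, not of a single root, so it gives no handle on $[L:F_{i}]$ or its local ramification.

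What actually closes the gap is the norm condition $N_{F/K}(a)\in K^{*2}$, which you never invoke in the hard direction. The paper's argument makes this explicit: one first base-changes to the fixed field $W$ of a $2$-Sylow, where by condition~(4) the algebra $F\otimes_{K}W$ splits as $W\times L\times\cdots\times L$. The $L$-factors are then automatically unramified by hypothesis, and the single $W$-factor $a_{1}$ is forced to be unramified because $a_{1}\equiv N_{L/W}(a_{2}\cdots a_{r})\bmod W^{*2}$ and lemma~\ref{unr el} pushes unramifiedness through the norm. The descent from $W$ back to $K$ then \emph{is} an odd-degree argument, but applied to the carefully chosen extension $\rho(E_{1})/\sigma(F)$ rather than to $L/F_{i}$. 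In short, your Sylow analysis is aimed at the wrong extension; the norm relation, not the ramification of $L/F_{i}$, is what makes the converse work.
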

\begin{proof}
From lemma~\ref{tfae}, since $H^{1}(G_{2},J[2])$ is trivial, we know $J[2]$ is
cohomologically trivial as a $\Gal (L/K)$-module. So
$H^{i}(\Gal (L/K),
J[2])$ is trivial for all $i$.
From the extended inflation-restriction sequence (see \cite{Kc}, p.\ 30)
we then have\[
H^{1}(K, J[2])\cong  \Hom\nolimits_{\Gal (L/K)}(\Gal (\overline{L}/L),J[2]).\]
By abuse of notation we will denote the preimage in $H^{1}(K,J[2])$ of
$C^{2}(K,J)$ also by $C^{2}(K,J)$. Thus, $C^{2}(K,J)$ is the subgroup
of $H^{1}(K,J[2])$ consisting of classes of cocycles, which when restricted
to $\Gal (\overline{L}/L)$ factor through $\Gal (H(L)/L)$ where $H(L)$
is the Hilbert class field of $L$.

We have seen that the group $H^{1}(K,J[2])$ is isomorphic to the
kernel of the norm from $F^{\ast}/F^{\ast 2}$ to $K^{\ast}/K^{\ast
2}$.  An element $a$ of the kernel of the norm from $F^{\ast}/F^{\ast
2}$ to $K^{\ast}/K^{\ast 2}$ is in the image of $C^{2}(K,J)$ exactly
when its image $(a_{1},\ldots ,a_{r})$ in $\prod
F_{i}^{\ast}/F_{i}^{\ast 2}$ has the property that for each $i$, the
field $L(\sqrt{a_{i}})$ is unramified over $L$ with the $F_{i}$'s
properly embedded in $L$.
That is, for each
$K$-algebra homomorphism $\sigma$ from $F$ to $L$, the field
$L(\sqrt{\sigma(a)})$ is unramified over $L$. An element $a$ in the
kernel of the norm from $F^{\ast}/F^{\ast 2}$ to $K^{\ast}/K^{\ast
2}$ is in the subgroup of unramified elements exactly when for each
$K$-algebra homomorphism $\sigma$ from $F$ to $L$, the field
$\sigma(F)(\sqrt{\sigma(a)})$ is unramified over $\sigma(F)$.
We are trying to show that these two subgroups of $F^{\ast}/F^{\ast 2}$
are the same.
It is
clear that the latter subgroup is contained in the former.

Let us prove that the former subgroup is contained in the latter.
Let $a$ be an element of $F^{\ast}$ with $N_{F/K}(a)$ in $K^{\ast 2}$ and
with the property that
for every $K$-algebra homomorphism $\sigma$ from $F$ to $L$ that
$L(\sqrt{\sigma(a)})$ is unramified over $L$.
We have assumed condition $(1)$ of lemma~\ref{tfae}
and so we have condition $(4)$.
First we will assume that $K=W$, where $W$ is
the fixed field of $G_{2}$, a 2-Sylow
subgroup of $\Gal (L/K)$. Under this assumption, the algebra $F$ is
isomorphic to the direct product of $K$ and $(d-1)/\#\Gal (L/K)$ copies
of $L$. Let the image of $a$ in this direct product
be $(a_{1},\ldots, a_{r})$.
We need to show that $K(\sqrt{a_{1}})$ is an unramified extension of $K$
and that $L(\sqrt{a_{i}})$ is an unramified extension of $L$
for each $i>1$. The latter statement is part of the assumption.
Now $N_{F/K}(a)$ is equal to $a_{1}N_{L/K}(a_{2})\cdot\ldots\cdot
N_{L/K}(a_{r})$ and is in $K^{\ast 2}$. Thus $a_{1}$ is congruent
to $N_{L/K}(a_{2}\cdot\ldots\cdot a_{r})$ modulo $K^{\ast 2}$. By
hypothesis, $L(\sqrt{a_{i}})$ for $i>1$ is an unramified extension of $L$.
By lemma~\ref{unr el},
the field $K(\sqrt{a_{1}})$ must be an unramified extension
of $K$. So for every $K$-algebra homomorphism $\sigma$ from $F$ to $L$
we have $\sigma(F)(\sqrt{\sigma(a)})$ is unramified over $\sigma(F)$.

Now we will no longer assume that $K=W$ and reduce to that case. Assume
$a$ is in $F^{\ast}$ with $N_{F/K}(a)$ in $K^{\ast 2}$ and assume
for every $K$-algebra homomorphism $\sigma$ from $F$ to $L$ that
$L(\sqrt{\sigma(a)})$ is unramified over $L$.
Let $F\otimes_{K}W$ be isomorphic to the product of fields $\prod E_{i}$.
We have $\sum [E_{i}:F]=[W:K]$ which is odd so without loss of generality,
let the degree of $E_{1}$ over $F$ be odd. There is some homomorphism $\gamma$
from $W$ to $L$ such that the homomorphism $\sigma\otimes\gamma$ from
$F\otimes_{K} W$
to $L$ factors through $E_{1}$ and we can denote the induced map from
$E_{1}$ to its image by $\rho$. Now the map $\rho$
from $E_{1}$ to $\rho (E_{1})$ extends the map $\sigma$ from $F$ to
$\sigma (F)$.
\[
\begin{array}{ccccc}
\sigma\otimes\gamma: F\otimes_{K}W\;\;\rightarrow & E_{1} &
\stackrel{\rho}{\rightarrow} & \rho (E_{1}) &
\subset\;\; L \\
 & \cup & & \cup & \\
 & F & \stackrel{\sigma}{\rightarrow} & \sigma (F) & \end{array}\]
The degree of $E_{1}$ over $F$ is odd, so the degree
of $\rho (E_{1})$ over $\sigma (F)$ is odd. From above,
since $\sigma\otimes\gamma$ is a homomorphism from $F\otimes_{K}W$ to $L$,
we know $\rho (E_{1})(\sqrt{\rho(a\otimes 1)})$
over $\rho (E_{1})$ is unramified.
The only elements of inertia groups we have to worry about are of 2-power
order.
Since the degree of $\rho (E_{1})$ over
$\sigma (F)$ is odd and $\rho$ extends $\sigma$, we know that
$\sigma (F)(\sqrt{\sigma (a)})$ is unramified over $\sigma (F)$ also.
Therefore the subgroup of the kernel of the norm from $F^{\ast}/F^{\ast 2}$
to $K^{\ast}/K^{\ast 2}$ of unramified elements is equal to the image
of $C^{2}(K,J)$.
\end{proof}

The group of unramified elements in $F^{\ast}/F^{\ast 2}$ is
the product of the groups of unramified elements in each
$F_{i}^{\ast}/F_{i}^{\ast 2}$.
If $W$ is any field, such as $K$ or an $F_{i}$, then the
group of unramified
elements in $W^{\ast}/W^{\ast 2}$
is isomorphic to the dual of ${\rm Cl}(W)/{\rm Cl}(W)^{2}$ from
Kummer theory and class field theory.

\subsection{The group $I^{2}(K_{\Gp},E)$}
\label{i2kpe}

In this section we
will see that for elliptic curves, the computation of $I^{2}(K_{\Gp},E)$
is essentially a matter of finding the action of $\Gal (\overline{K_{\Gp}}/
K_{\Gp})$ on $E[2]$ and
using Tate's algorithm over $K_{\Gp}$
to deduce the action
of $\Gal (M_{\Gp}/K_{\Gp})$ on $E(M_{\Gp})/E_{0}(M_{\Gp})$.
This is facilitated by that fact that the possible
groups $E(M_{\Gp})/E_{0}(M_{\Gp})$ and the possible Galois actions on
them are very limited.

Let $E$ be an elliptic curve defined over
a number field $K$.
Condition $(5)$ of lemma~\ref{tfae} is satisfied for all elliptic curves
so condition $(3)$ holds. That says that
$H^{1}(G,E[2])=0$ for all groups
$G$ contained in $S_{3}$, where $S_{3}$ is the automorphism group of $E[2]$.
So the conditions of theorems~\ref{1st thm} and~\ref{thm:mainthm} hold.
{}From theorem~\ref{thm:mainthm} and lemmas~\ref{le:unr hom}
and~\ref{le:EmodE0},
the order of the local intersection, $I^{2}(K_{\Gp},E)$, is
the same as the order of the local group of unramified homomorphisms,
$C^{2}(K_{\Gp},E)$, if all of
the points in $E(M_{\Gp})[2]$ have non-singular reduction, which will happen
if the elliptic curve has good reduction at $\Gp$. So the only cases where
something interesting can happen occur when
$E$ has bad reduction at $\Gp$.

{}From theorem~\ref{thm:mainthm}, in order to compute the size of the local
intersection, we need to find out how many elements of $E(M_{\Gp})[2]$
are in $(\tau-1)E(M_{\Gp})$ and divide that quantity by the size of
$(\tau-1)(E(M_{\Gp})[2])$.
We know from lemma~\ref{le:EmodE0} that the elements of $E(M_{\Gp})[2]$
that are in $(\tau-1)E(M_{\Gp})$ are those
with
non-singular reduction and those with
singular reduction which
have the property that their image in $E(M_{\Gp})/E_{0}
(M_{\Gp})$ is in $(\tau-1)(E(M_{\Gp})/E_{0}(M_{\Gp}))$.
{}From theorem~\ref{thm:cases}
we find that the only times that a
2-torsion point with singular reduction could possibly be in the image of
$\tau -1$
are in the cases that
$E$ has type $I_{\nu}$ reduction where $4\!\mid\!\nu$
or type $I_{\nu}^{*}$
reduction with $\nu$ odd or even.
As we will see, in each of these three cases, a 2-torsion point with
singular reduction, may or may not be in the group
$(\tau-1)E(M_{\Gp})$.
In the other cases, either $E(K_{\Gp}^{\rm unr})/E_{0}(K_{\Gp}^{\rm unr})$
has no
2-torsion or a point with singular reduction generates the 2-part of
$E/E_{0}$ and can not be in the image of $\tau-1$.
Computations of the groups $E({\bf Q}_{p})/E_{0}({\bf Q}_{p})$
were accomplished
with Tate's algorithm \cite{Ta}.
\begin{enumerate}

\item {\it Examples of curves with type $I_{\nu}$ reduction with $4\!\mid\nu$
and 2-torsion points of singular reduction in and
not in the image of $\tau-1$}.
The curves $Y^{2}=X^{3}-26X^{2}+135X-567$ and
$Y^{2}=X^{3}+26X^{2}+135X+567$
have discriminant
$\Delta =-2^{4}\cdot 3^{4}\cdot 23^{2}\cdot 239$.
Over ${\bf Q}_{3}$ they each have type
$I_{\nu}$ reduction (multiplicative) with $\nu =4$. Each have two 2-torsion
points with singular reduction defined over ${\bf Q}_{3}$.
They differ by a 2-torsion point with non-singular reduction and so have the
same image in $E/E_{0}$.
For both curves, the group
$E({\bf Q}_{3}^{\rm unr})/E_{0}({\bf Q}_{3}^{\rm unr})$ is isomorphic to
${\bf Z}/4{\bf Z}$.
The first curve has
split multiplicative reduction over ${\bf Q}_{3}$
and the second does not. Therefore
the group $E({\bf Q}_{3})/E_{0}({\bf Q}_{3})$ is isomorphic to
${\bf Z}/4{\bf Z}$ for the first curve and ${\bf Z}/2{\bf Z}$ for the second.
The image of the 2-torsion points with singular reduction is the element
of order 2 in both groups.
All of the 2-torsion is defined over ${\bf Q}_{3}$ for these curves, so
$M_{3}$
is the quadratic unramified extension of ${\bf Q}_{3}$.
We have $E(M_{3})/E_{0}(M_{3})$ isomorphic to ${\bf Z}/4{\bf Z}$ for both
curves. The
automorphism $\tau$ acts trivially on each element of $E(M_{3})/E_{0}(M_{3})$
for the first curve and as $-1$ for the second.
The 2-torsion points with singular reduction
are in the image of the map
$\tau -1$ for the second curve, but not the first.

For both curves the size of $E(M_{3})[2]$ is 4 and $\tau$ acts trivially on
each element so $(\tau-1)(E(M_{3})[2])$ is trivial.
For both curves the size of $E_{0}(M_{3})[2]$ is 2
and from lemma~\ref{le:EmodE0}, those 2 points are automatically in
$(\tau-1)(E(M_{3}))$.
From above, the two 2-torsion points with singular reduction on the first
curve are not in the image of $\tau -1$.
Thus the size of $(\tau-1)E(M_{3})\cap E(M_{3})[2]$ is 2 and so the size of
$I^{2}({\bf Q}_{3},E)$ is 2.
The two 2-torsion points with singular reduction on the second curve are in
the image of $\tau -1$.
So $(\tau-1)E(M_{3})\cap E(M_{3})[2]$ and $I^{2}({\bf Q}_{3},E)$
each have size 4.

\item {\it Examples of curves with type $I_{\nu}^{*}$ reduction with $\nu$
odd
and 2-torsion points of singular reduction in and
not in the image of $\tau-1$}.
The curves $Y^{2}=X^{3}-23^{2}X+23^{3}$ and
$Y^{2}=X^{3}-23^{2}X-23^{3}$ have discriminant $\Delta = -2^{4}\cdot 23^{7}$.
Over ${\bf Q}_{23}$ they each have type $I_{\nu}^{*}$ reduction with $\nu =1$.
For both curves, two 2-torsion points are defined over a quadratic ramified
extension of ${\bf Q}_{23}$. Each curve has one 2-torsion point defined over
${\bf Q}_{23}$ with singular reduction. So the field $M_{23}$ is the quadratic
unramified extension of ${\bf Q}_{23}$.
For the first curve, the groups $E({\bf Q}_{23})/E_{0}({\bf Q}_{23})$ and
$E(M_{23})/E_{0}(M_{23})$ are both isomorphic to ${\bf Z}/4{\bf Z}$.
For the second curve,
the group $E({\bf Q}_{23})/E_{0}({\bf Q}_{23})$ is isomorphic to
${\bf Z}/2{\bf Z}$ and
$E(M_{23})/E_{0}(M_{23})$ is isomorphic to ${\bf Z}/4{\bf Z}$.
For both curves, $(\tau-1)
(E(M_{23})[2])$
is trivial and $E(M_{23})[2]$ has order 2. The action of $\tau$
on the
groups $E(M_{23})/E_{0}(M_{23})$ is identical to that in the last example.
So $(\tau-1)E(M_{23})\cap E(M_{23})[2]$ and $I^{2}({\bf Q}_{23},E)$ are
trivial for the first curve and have order 2 for the second curve.

\item {\it Examples of curves with type $I_{\nu}^{*}$ reduction with $\nu$
even
and 2-torsion points of singular reduction in and
not in the image of $\tau-1$}.
The curve $Y^{2}=X^{3}+X^{2}+4X+12$
has discriminant $\Delta =-2^{8}\cdot
3^{2}\cdot 23$. Over ${\bf Q}_{2}$ it has type $I_{\nu}^{*}$ reduction with
$\nu =0$.
All of the 2-torsion points are defined over ${\bf Q}_{2}$.
So the field $M_{2}$ is the quadratic unramified extension of ${\bf Q}_{2}$.
The group $(\tau-1)(E(M_{2})[2])$ is trivial.
Two points in $E(M_{2})[2]$
have non-singular reduction and so are in $(\tau-1)E(M_{2})$.
Two of the 2-torsion points
have singular reduction.
They generate $E({\bf Q}_{2})/E_{0}({\bf Q}_{2})$
where they have the same image; this group
has two elements.
The group $E({\bf Q}_{2}^{\rm unr})/E_{0}({\bf Q}_{2}^{\rm unr})$
is isomorphic to ${\bf Z}/2{\bf Z}\oplus {\bf Z}/2{\bf Z}$.
There are only 2 automorphisms
of ${\bf Z}/2{\bf Z}\oplus {\bf Z}/2{\bf Z}$ fixing a given
subgroup of order 2. So the group $E(M_{2})/E_{0}(M_{2})$
must be isomorphic to
${\bf Z}/2{\bf Z}\oplus
{\bf Z}/2{\bf Z}$.
The image of $\tau -1$ on an element of $E(M_{2})/E_{0}(M_{2})$ not fixed
by $\tau$ is the image of the 2 singular points in $E(M_{2})/E_{0}(M_{2})$.
Thus the two 2-torsion points with
singular reduction are also in $(\tau-1)E(M_{2})$, so $I^{2}({\bf Q}_{2},E)$
has 4 elements.

The curve $Y^{2}=X^{3}-25X$ has discriminant $\Delta =2^{6}\cdot 5^{6}$.
Over ${\bf Q}_{5}$ it also has type $I_{\nu}^{*}$ reduction with
$\nu =0$.
All of the 2-torsion points are defined over ${\bf Q}_{5}$.
This curve has three 2-torsion points with singular reduction.
The group $E({\bf Q}_{5})/E_{0}({\bf Q}_{5})$
is isomorphic to
${\bf Z}/2{\bf Z}\oplus
{\bf Z}/2{\bf Z}$ as it is generated by the 2-torsion points. Since
the group
$E({\bf Q}_{5}^{\rm unr})/E_{0}({\bf Q}_{5}^{\rm unr})$ has been
realized over ${\bf Q}_{5}$ it is impossible for these 2-torsion points to
be in the image of $\tau -1$. So the group $I^{2}({\bf Q}_{5},E)$ is trivial.

The curve $Y^{2}=X^{3}-75X+125$ has discriminant $\Delta =2^{4}\cdot
3^{4}\cdot 5^{6}$. Over ${\bf Q}_{5}$ it also has type $I_{\nu}^{*}$
reduction with $\nu =0$. This curve has three 2-torsion points with
singular reduction and they are all defined over $L_{5}$, the cubic unramified
extension of ${\bf Q}_{5}$. The group $E({\bf Q}_{5})/E_{0}({\bf Q}_{5})$
is trivial. The group $E(L_{5})/E_{0}(L_{5})$ is isomorphic to
${\bf Z}/2{\bf Z}\oplus{\bf Z}/2{\bf Z}$ and is generated by the 2-torsion
points.
The field $M_{5}$ is the sextic unramified extension
of ${\bf Q}_{5}$. The automorphism $\tau$ permutes the three 2-torsion points
with singular reduction.
Though all of the 2-torsion points are in
$(\tau -1)E(M_{5})$, they are also in $(\tau -1)E(M_{5})[2]$
and so the group $I^{2}({\bf Q}_{5},E)$
is trivial. This should be clear anyway as the group $E({\bf Q}_{5})[2]$
is trivial, so the group of unramified homomorphisms is also.
\end{enumerate}

\subsection{The groups $S^{2}(K,A)/I^{2}(K,A)$ and $C^{2}(K,A)/I^{2}(K,A)$}

Let us return to arbitrary abelian varieties and derive more elegant
upper bounds for the sizes of the quotients of the Selmer group and
the group related to the class group by their intersection, for the 2-map.
We can reformulate theorem~\ref{1st thm} with the following
divisibility conditions.
\begin{prop}
\label{divis}
We have
\[
\# S^{2}(K,A)/I^{2}(K,A)\;|\;
\prod\limits_{\Gp\in T}\frac{\# A(K_{\Gp})[2]}{\# I^{2}(K_{\Gp},A)}\]
and
\[
\# C^{2}(K,A)/I^{2}(K,A)\;|\;
\prod\limits_{\Gp |\Gf}\frac{\# A(K_{\Gp})[2]}{\# I^{2}(K_{\Gp},A)}\]
where $T$ is the set of primes of $K$ consisting of the
infinite primes
and the primes that divide $2\Gf$, where $\Gf$ is the conductor of $A$.
\end{prop}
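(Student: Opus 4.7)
My plan is to deduce Proposition~\ref{divis} as a direct corollary of Theorem~\ref{1st thm} by pruning each of its two global-to-local injections down to the primes that actually contribute. Theorem~\ref{1st thm} supplies the injections
\[
S^{2}(K,A)/I^{2}(K,A)\hookrightarrow\prod_{\Gp}S^{2}(K_{\Gp},A)/I^{2}(K_{\Gp},A),\qquad
C^{2}(K,A)/I^{2}(K,A)\hookrightarrow\prod_{\Gp}C^{2}(K_{\Gp},A)/I^{2}(K_{\Gp},A),
\]
together with the identifications $\#C^{2}(K_{\Gp},A)=\#A(K_{\Gp})[2]$ at every finite prime and $C^{2}(K_{\Gp},A)=0$ at every infinite prime. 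Taking orders on both sides reduces the proposition to checking that factors outside the advertised index sets are trivial and that each surviving factor is at most $\#A(K_{\Gp})[2]/\#I^{2}(K_{\Gp},A)$.

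For the $C^{2}$ bound I would first eliminate primes not dividing~$\Gf$: at an infinite prime Lemma~\ref{le:arch} gives $C^{2}(K_{\Gp},A)=I^{2}(K_{\Gp},A)=0$, and at a finite $\Gp\nmid\Gf$ the good-reduction statement of Theorem~\ref{1st thm} gives $C^{2}(K_{\Gp},A)\cong I^{2}(K_{\Gp},A)\cong A(K_{\Gp})[2]$. The injection therefore factors through $\prod_{\Gp\mid\Gf}C^{2}(K_{\Gp},A)/I^{2}(K_{\Gp},A)$, and substituting $\#C^{2}(K_{\Gp},A)=\#A(K_{\Gp})[2]$ at each such prime yields the second divisibility.

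The $S^{2}$ bound follows the same pruning: at any finite $\Gp\notin T$ Theorem~\ref{1st thm} forces $S^{2}(K_{\Gp},A)=I^{2}(K_{\Gp},A)$, so the product collapses to $\Gp\in T$. It then remains to verify $\#S^{2}(K_{\Gp},A)/\#I^{2}(K_{\Gp},A)\mid\#A(K_{\Gp})[2]/\#I^{2}(K_{\Gp},A)$ at each surviving prime. At a real prime this is immediate from Lemma~\ref{le:arch}: whenever $S^{2}(K_{\Gp},A)\neq 0$ all of $A[2]$ is $K_{\Gp}$-rational, so $\#S^{2}(K_{\Gp},A)\leq 2^{g}$ divides $\#A(K_{\Gp})[2]=2^{2g}$ while $I^{2}(K_{\Gp},A)=0$. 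At $\Gp\mid\Gf$ with $\Gp\nmid 2$ the size formula of Lemma~\ref{size}, applied with $\phi=[2]$ and $A=A'$, collapses to $\#S^{2}(K_{\Gp},A)=\#A(K_{\Gp})[2]$ and the bound is tight. The main obstacle is at $\Gp\mid 2$, where the factor $|\phi'(0)|^{-1}=2^{g[K_{\Gp}:{\bf Q}_{2}]}$ inflates $\#S^{2}(K_{\Gp},A)$ strictly beyond $\#A(K_{\Gp})[2]$; here I would argue that the extra formal-group contribution cannot be realized by the image of a global Selmer class modulo $I^{2}(K,A)$, using the identity $S^{2}(K)\cap C^{2}(K)=I^{2}(K)$ inside the combined quotient $(S^{2}+C^{2})/I^{2}$ to trade the inflated $S^{2}$-factor at $\Gp\mid 2$ for the trivial $C^{2}$-factor predicted by Theorem~\ref{1st thm}.
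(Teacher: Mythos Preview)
Your argument for the $C^{2}$ bound and for the pruning of the $S^{2}$ product down to $T$ is fine and matches the paper. The gap is entirely in your handling of primes over~$2$.

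You correctly observe that at $\Gp\mid 2$ one has $\#S^{2}(K_{\Gp},A)=2^{g[K_{\Gp}:{\bf Q}_{2}]}\cdot\#A(K_{\Gp})[2]$, which exceeds $\#A(K_{\Gp})[2]$. Your proposed fix---``trade the inflated $S^{2}$-factor at $\Gp\mid 2$ for the trivial $C^{2}$-factor \ldots\ inside $(S^{2}+C^{2})/I^{2}$''---is not an argument; there is no mechanism by which the local $C^{2}$ conditions constrain the image of $S^{2}(K,A)/I^{2}(K,A)$ in the local $S^{2}$ quotients. The injection of Theorem~\ref{1st thm} for $S^{2}$ already uses everything available, and its target really can be larger than the product in the proposition at individual primes.

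What you are missing is that the excess at $\Gp\mid 2$ is compensated \emph{exactly} by a deficit at the infinite primes, via a product formula. At an archimedean prime the paper computes not merely an inequality but the equality $\#A(K_{\Gp})/2A(K_{\Gp})=\#A(K_{\Gp})[2]/2^{g[K_{\Gp}:{\bf R}]}$ (this requires the snake lemma applied to the exact sequence $0\to\Lambda\cap{\bf R}^{g}\to{\bf R}^{g}\to A({\bf R})\to H^{1}(\Gal({\bf C}/{\bf R}),\Lambda)\to 0$, not just the bound of Lemma~\ref{le:arch}). Then
\[
\prod_{\Gp\mid\infty}\frac{\#S^{2}(K_{\Gp},A)}{\#I^{2}(K_{\Gp},A)}\cdot\prod_{\Gp\mid 2}\frac{\#S^{2}(K_{\Gp},A)}{\#I^{2}(K_{\Gp},A)}
=\frac{\prod_{\Gp\mid 2}2^{g[K_{\Gp}:{\bf Q}_{2}]}}{\prod_{\Gp\mid\infty}2^{g[K_{\Gp}:{\bf R}]}}\cdot\prod_{\Gp\mid 2\ {\rm or}\ \infty}\frac{\#A(K_{\Gp})[2]}{\#I^{2}(K_{\Gp},A)},
\]
and the first fraction on the right is $2^{g[K:{\bf Q}]}/2^{g[K:{\bf Q}]}=1$. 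So the product of the actual local quotients over $\Gp\in T$ equals the claimed bound on the nose, and the divisibility follows from the injection. Your upper bound $\#S^{2}(K_{\Gp},A)\le 2^{g}$ at real primes is too weak to see this cancellation.
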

\begin{proof}
The second divisibility statement follows immediately from
theorem~\ref{1st thm}. Let us consider the group
$S^{2}(K,A)/I^{2}(K,A)$.
The given divisibility statement differs from
what appears in
theorem~\ref{1st thm} at the primes dividing 2 and the infinite primes
since at all other primes we have $|\phi'(0)|=1$ as well as $A=A'$.
Let us consider the order of the group $A(K_{\Gp})/2A(K_{\Gp})$
where $\Gp$ is a prime of $K$, i.e.\ $S^{2}(K_{\Gp},A)$.
First assume that $\Gp$ is
a prime dividing 2. From proposition~\ref{pr:sizen},
we have
\[
\# A(K_{\Gp})/2A(K_{\Gp})=2^{g[K_{\Gp}:{\bf Q}_{2}]}\cdot\# A(K_{\Gp})[2]\]
where $g$ is the dimension of $A$.

If $K_{\Gp}\cong {\bf C}$, then
$A(K_{\Gp})/2A(K_{\Gp})$ is a trivial
group.
Let $K_{\Gp}\cong {\bf R}$.
We have the following commutative diagram from the snake lemma\[
\begin{array}{ccccccccc}
0 & \rightarrow & {\rm ker} & \rightarrow & A({\bf R})[2] & \rightarrow
& H^{1}(\Gal ({\bf C}/{\bf R}), \Lambda ) & \rightarrow & \\
 & & \downarrow & & \downarrow & & \downarrow & & \\
0 & \rightarrow
 & {\bf R}^{g}/\Lambda \cap({\bf R}^{g})
 & \rightarrow & A({\bf R}) & \rightarrow
 & H^{1}(\Gal ({\bf C}/{\bf R}), \Lambda ) & \rightarrow & 0 \\
 & & \;\;\;\;\;\downarrow [2] & & \;\;\;\;\;\downarrow [2]
 & & \;\;\;\;\;\downarrow [2] & & \\
0 & \rightarrow
 & {\bf R}^{g}/\Lambda \cap({\bf R}^{g})
 & \rightarrow & A({\bf R}) & \rightarrow
 & H^{1}(\Gal ({\bf C}/{\bf R}), \Lambda ) & \rightarrow & 0 \\
 & & \downarrow & & \downarrow & & \downarrow & & \\
 & \rightarrow & 0 & \rightarrow & A({\bf R})/2A({\bf R}) & \rightarrow
 & H^{1}(\Gal ({\bf C}/{\bf R}), \Lambda ) & \rightarrow & 0.
\end{array}\]
The center two short exact sequences appeared in the proof of
lemma~\ref{le:arch}.
The size of the kernel of the 2-map from ${\bf R}^{g}/\Lambda \cap
({\bf R}^{g})$
to itself is $2^{g}$.
We see that  the size of $A({\bf R})/2A({\bf R})$ equals
$\# A({\bf R})[2]/2^{g}$.
Notice that if $K_{\Gp}$ is isomorphic to ${\bf C}$ or to ${\bf R}$
that we have\[
\# A(K_{\Gp})/2A(K_{\Gp})=\#A(K_{\Gp})[2]/2^{g[K_{\Gp}:{\bf R}]}.\]
So the contribution from
the infinite
primes and the primes dividing 2 is the following
\[
\prod\limits_{\Gp |\infty}\frac{\# A(K_{\Gp})[2]}{2^{g[K_{\Gp}:{\bf R}]}}
\cdot\prod\limits_{\Gp |2}\frac{2^{g[K_{\Gp}:{\bf Q}_{2}]}
\cdot\#A(K_{\Gp})[2]}
{\# I^{2}(K_{\Gp},A)}\]
\[=\frac{2^{g[K:{\bf Q}]}}{2^{g[K:{\bf Q}]}}\cdot
\prod\limits_{\Gp |\infty}\# A(K_{\Gp})[2]
\cdot
\prod\limits_{\Gp |2}\frac{\# A(K_{\Gp})[2]}{\# I^{2}(K_{\Gp},A)}\]
\[ =
\prod\limits_{\Gp |2\atop {\rm or} \infty}\frac{\#
A(K_{\Gp})[2]}{\# I^{2}(K_{\Gp},A)}\]
since $I^{2}(K_{\Gp},A)$ is trivial for all infinite primes from
lemma~\ref{le:arch}.

\end{proof}

\subsection {Examples}

\noindent
Example I: {\em A cubic field whose class group has 2-rank at least 13}

\vspace{3mm}

Mestre \cite{M14} has produced an elliptic curve $E$ of rank at least 14
over ${\bf Q}$.  A Weierstrass equation for the curve $E$ is given by
\[Y^{2}+357573631Y=X^{3}+2597055X^{2}-549082X-19608054.\]
The discriminant is odd, square-free and negative. The primes of bad
reduction are those
that divide
the conductor of $E$ which are exactly the ones that divide the discriminant.
There are no non-trivial 2-torsion points defined over ${\bf Q}$.
Let us determine whether $L={\bf Q}(E[2])$ is an $A_{3}$- or an
$S_{3}$-extension of ${\bf Q}$.
Choose another Weierstrass equation for $E$ of the form $Y^{2}=g$
where $g\in {\bf Z}[X]$; the polynomial $g$ will be irreducible.
The quotient of the discriminants of the
two Weierstrass equations will be a square. The discriminant of the
second will be 16 times the discriminant of the cubic polynomial $g$.
If $g(\alpha)=0$, then $(\alpha ,0)$ is a 2-torsion point. The field
$F={\bf Q}(\alpha )$ is a cubic extension of ${\bf Q}$
whose discriminant differs
by a square from the discriminant of $g$. Thus the discriminant of the
field $F$ is not a square, so $L$ is an $S_{3}$-extension
of ${\bf Q}$ and $F$ is a non-Galois cubic extension of ${\bf Q}$.

Let $p$ be an odd prime of bad reduction. The
elliptic curve has type $I_{\nu}$ reduction (multiplicative)
at $p$ with $\nu =1$. So $E({\bf Q}_{p}^{\rm unr})/
E_{0}({\bf Q}_{p}^{\rm unr})$ is trivial and
the 2-torsion points defined over
unramified extensions of ${\bf Q}_{p}$ are in $E_{0}$.
From lemma~\ref{le:EmodE0}, all points of $E(M_{p})[2]$ are in
$(\tau-1)(E(M_{p}))$. From theorem~\ref{thm:mainthm}, the group
$I^{2}({\bf Q}_{p},E)$ has the same size as $E({\bf Q}_{p})[2]$. From
lemma~\ref{le:unr hom} and proposition~\ref{pr:sizen}, the groups
$C^{2}({\bf Q}_{p},E)$ and $S^{2}({\bf Q}_{p},E)$ also have the same
size as $E({\bf Q}_{p})[2]$ (which is 2). So
$C^{2}({\bf Q}_{p},E)$ and $S^{2}({\bf Q}_{p},E)$ are equal.

At the prime
2, the curve has supersingular good reduction and the reduced curve has
5 points over ${\bf F}_{2}$.
In fact the group $E({\bf Q}_{2})[2]$ is trivial and so the group of
unramified homomorphisms at the prime 2, namely $C^{2}({\bf Q}_{2},E)$,
is trivial and $E({\bf Q}_{2})/
2E({\bf Q}_{2})$, or $S^{2}({\bf Q}_{2},E)$, has 2 elements.
Since the discriminant is negative, we have $L_{\infty}={\bf R}(E[2])\cong
{\bf C}$ so from proposition~\ref{pr:archec}
the group $E({\bf R})/2E({\bf R})$ is trivial. So the three local groups
are all trivial for the infinite prime. Thus for all primes but 2, all
three local groups are the same. For the prime 2, the group $C^{2}(
{\bf Q}_{2},E)$ is contained in the group $S^{2}({\bf Q}_{2},E)$ with
index 2. Therefore,
the group of globally unramified homomorphisms, $C^{2}({\bf Q},E)$,
is contained in
the 2-Selmer group, $S^{2}({\bf Q},E)$,
with index at most 2.  To show it has index 2, we
need to find an element of the Selmer group that restricts to the
image of the non-trivial element of
$E({\bf Q}_{2})/2E({\bf Q}_{2})$ in $H^{1}({\bf Q}_{2},E[2])$.
There is a rational point with
$X$-coordinate $-2561042$. When multiplied by 5, one gets a point
in the kernel of reduction.
The valuation of the $X$-coordinate of this second point
at the prime 2 is $-2$ so it is in $E_{1}({\bf Q}_{2})$ but not in
$E_{2}({\bf Q}_{2})$. The group $E_{2}({\bf Q}_{2})$ is isomorphic to
the additive group of the ring of integers in ${\bf Q}_{2}$ and
$E_{1}({\bf Q}_{2})/E_{2}({\bf Q}_{2})$
has 2 elements (see \cite{Si}, chapter IV: proposition 3.2 and theorem 6.4).
Therefore this point cannot be in $2E({\bf Q}_{2})$.
This point
maps to a homomorphism ramified at 2.
So we know that $C^{2}({\bf Q},E)$ has rank at least 13. From
theorem~\ref{kernorm},
since ${\rm Cl}({\bf Q})$ is trivial, the group $C^{2}({\bf Q},E)$ is
isomorphic to the dual of
${\rm Cl}(F)/{\rm Cl}(F)^{2}$.
So using the generators that Mestre produced, we can show that the class group
of $F$ has 2-rank at least 13, but we cannot show it is any greater than
that.

\vspace{.2in}
\noindent
Example II: {\em A curve of genus 2 whose Jacobian has rank 6 or 7
over ${\bf Q}$.}

\vspace{3mm}

Let $C$ be the hyperelliptic curve defined over ${\bf Q}$ by the equation
$Y^{2}=f$ where
$f=X^{5}+16X^{4}-274X^{3}+817X^{2}+178X+1$.
Let $J$ be the Jacobian
of the normalization of $C$.
Let $\{ \alpha_{i}\}$ be the the set of roots of $f$. A basis for
$J[2]$ is the set of points
$(\alpha_{i},0)-\infty$ for $i=1$ to 4.
The point $(\alpha_{5},0)-\infty$ is the sum of the other 4.
The 2-torsion points are all defined over
the splitting field of $f$ which is the simplest quintic field $L$ with
discriminant $941^{4}$ (see \cite{Wa2}). Simplest quintic fields are
Galois over ${\bf Q}$.
The polynomial $f$
has discriminant $941^{4}191^{2}$.
The primes we have to worry about are $\infty , 2, 191$, and 941.
We would like to show that $J({\bf Q})$ has rank 6 or 7.

We have $L\cong F={\bf Q}[T]/f(T)$ in the notation of
subsection~\ref{1stsub}.
The group $H^{1}({\bf Q},J[2])$ is isomorphic to the kernel
of the norm from $L^{\ast}/L^{\ast 2}$ to ${\bf Q}^{\ast}/{\bf Q}^{\ast 2}$.
If we compose this isomorphism with the coboundary embedding
from $J({\bf Q})/2J({\bf Q})$ to $H^{1}({\bf Q},J[2])$ we get a map
which is identical to the map $X-T$ which is defined as follows (see
\cite{Sc}).
Pick a degree 0 divisor $D=\sum n_{i}R_{i}$
of $C$ defined over ${\bf Q}$ whose support
does not contain any point of $C$ with $Y=0$ or the point $\infty$.
Define\[
(X-T)(D)=\prod(X(R_{i})-T)^{n_{i}}.\]
One can show that
the image of a point $(x,y)-\infty$ where $y\neq 0$ is $x-T$
by finding the image of an
equivalent divisor whose support does not contain $\infty$.
Let $L_{p}={\bf Q}_{p}[T]/f(T)$. We similarly have embeddings of
$J({\bf Q}_{p})/2J({\bf Q}_{p})$ into $L_{p}^{\ast}/L_{p}^{\ast 2}$ by
the map $X-T$.
If the prime $p$ of ${\bf Q}$ splits in $L$ then $L_{p}$ is isomorphic
to the product of 5 copies of ${\bf Q}_{p}$ and the 2-torsion is rational
over ${\bf Q}_{p}$.
Then we
have \[
L_{p}\cong {\bf Q}_{p}[T]/f(T)\cong {\bf Q}_{p}\times\ldots\times
{\bf Q}_{p}\;\; {\rm by}\;\; T\mapsto (\alpha_{1},\ldots ,\alpha_{5}).\]
Again by finding the image of an equivalent divisor one can show that
the point $(\alpha_{i},0)-\infty$ maps to $\alpha_{i}-\alpha_{j}$ at the
$j$th component of $({\bf Q}_{p}^{\ast}/{\bf Q}_{p}^{\ast 2})^{5}$
for $j\neq i$ and to the product of the other 4 entries
at the $i$th component.

The 2-rank of
the class group of $L$ is 4 (see \cite{Wa2}).
Since condition $(5)$ of lemma~\ref{tfae} is satisfied, the groups
$H^{i}(G,J[2])$ are trivial for all $i$ and for all $G$ contained in
$\Gal (L/{\bf Q})$.
Therefore the group $H^{1}({\bf Q},J[2])$ is isomorphic
to both $\Hom_{\Gal (L/{\bf Q})}
(\Gal (\overline{L}/L),J[2])$ and to the kernel
of the norm from $L^{\ast}/L^{\ast 2}$ to ${\bf Q}^{\ast}/{\bf Q}^{\ast 2}$.
Thus, from theorem~\ref{kernorm}, the group
$C^{2}({\bf Q},J)$ is isomorphic to the group of unramified elements in
the kernel of the norm from
$L^{\ast}/L^{\ast 2}$ to ${\bf Q}^{\ast}/{\bf Q}^{\ast 2}$.
Since ${\bf Q}$ has a trivial class group, we know $C^{2}({\bf Q},J)$
is isomorphic to
exactly one
copy of the dual of ${\rm Cl}(L)/{\rm Cl}(L)^{2}$.
Therefore $C^{2}({\bf Q},J)$ has rank 4.
The roots of $f$ are all real, one is in the interval $(-27,-28)$,
two are in $(-1,0)$,
one in $(5,6)$ and one in $(6,7)$.
Since there are 3 negative roots and 2 positive roots and they are units,
the narrow and
wide class numbers of $L$ are the same.

The points $(-17,\pm 1223)$, $(-9,\pm 557)$, $(-6,\pm 317)$, $(-2,\pm 73)$,
$(0,\pm 1)$ and $(4,\pm 37)$ are all in
$C({\bf Q})$.
In this example, the notation $(n)$ will refer to
the divisor $(n,\sqrt{f(n)})-\infty$ and its image in
$J(N)/2J(N)$ over the appropriate field $N$.
Let us compute the local groups for the four interesting primes.
We know from the proof of proposition~\ref{divis} that the size of
$J({\bf R})/2J({\bf R})$
is the same as $\# J({\bf R})[2]/2^{2}$ which is 4.
If we map $(-2)$ into $(L_{\infty}^{\ast}/L_{\infty}^{\ast 2})^{5}$,
which is isomorphic to
$({\bf R}^{\ast}/{\bf R}^{\ast 2})^{5}$,
we get $(1,-1,-1,-1,-1)$ and if we map
$(0)$ we get $(1,1,1,-1,-1)$. So $(-2)$ and $(0)$ generate
$J({\bf R})/2J({\bf R})$.
Now the prime 2 is inert in $L$ and so $J({\bf Q}_{2})[2]$ is trivial.
Thus the group $C^{2}({\bf Q}_{2},J)$ is trivial and
from proposition~\ref{pr:sizen}, the group
$S^{2}({\bf Q}_{2},J)$
has rank 2. The prime 941 ramifies totally in $L$ and so
$J({\bf Q}_{941})[2]$ is trivial
as are the groups $S^{2}({\bf Q}_{941},J)$ and $C^{2}({\bf Q}_{941},J)$
(and so this prime can be ignored).

We have
\[
Y^{2}=f=(X-\alpha_{1})\ldots (X-\alpha_{5})
\equiv (X-5)(X-6)(X-37)(X-159)^{2}({\rm mod}\; 191).\]
So the prime 191 splits in $L$ and so $C^{2}({\bf Q}_{191},J)$ and
$S^{2}({\bf Q}_{191},J)$ each have rank 4.
In the following table we present a list of some rational points of
$J({\bf Q}_{191})$ and their images in
$L_{191}^{\ast}/L_{191}^{\ast 2}$
where $\pi$ is a prime element and $-1$ is a quadratic non-residue modulo
191.
Along the top is written $x-\alpha_{i}$
to remind us how to compute the $i$th component.

\[
\begin{array}{rrrrrrr}
& & x-5 & x-6 & x-37 & x-\alpha_{4} & x-\alpha_{5} \\
& (\alpha_{1}) & 1 & -1 & -1 & -1 & -1 \\
& (\alpha_{2}) & 1 & 1 & 1 & -1 & -1 \\
& (\alpha_{3}) & 1 & -1 & -1 & 1 & 1 \\
& (\alpha_{4}) & 1 & 1 & -1 & \pi & -\pi \\
& (\alpha_{5}) & 1 & 1 & -1 & \pi & -\pi \\
& (-17) & 1 & -1 & -1 & 1 & 1 \\
& (-9) &  1 & -1 & -1 & 1 & 1 \\
& (-6) &  1 & -1 & -1 & 1 & 1 \\
& (-2) &  1 & -1 & -1 & 1 & 1 \\
& (0) & -1 & -1 & 1 & 1 & 1 \\
& (4) & -1 & -1 & 1 & 1 & 1
\end{array}\]

We see that $(\alpha_{1}),(\alpha_{4}),(-2)$ and $(0)$ form a basis for
$J({\bf Q}_{191})/2J({\bf Q}_{191})$ and that \linebreak
$I^{2}({\bf Q}_{191},
J)$ has rank 3.
So our first estimates from theorem~\ref{1st thm}
show that $C^{2}({\bf Q},J)/
I^{2}({\bf Q},J)$ has rank at most 1, from the prime 191,
and $S^{2}({\bf Q},J)/I^{2}({\bf Q},J)$
has rank at most 5, which is
2 from the infinite prime, 2 from the prime 2 and 1 from
the prime 191. Using the fact that the
narrow and wide class numbers are the same, we see that we cannot
have a quadratic extension of $L$ which is ramified only at $\infty$
therefore $S^{2}({\bf Q},J)/I^{2}({\bf Q},J)$
has rank at most 3. Since $C^{2}({\bf Q},J)$
has rank 4, we see from theorem~\ref{1st thm}
that the rank of $S^{2}({\bf Q},J)$ is between 3 and 7.

At this point we will show that $(-17), (-9), (-6), (-2), (0)$ and $(4)$
are all independent points of infinite order in $J({\bf Q})$.
The prime 37 splits in $L$ and we have \[
Y^{2}=f\equiv (X-4)(X-8)(X-12)(X-16)(X-18)\;({\rm mod}\; 37).\]
The following shows the images of the given rational points in
$L_{37}^{\ast}/L_{37}^{\ast 2}$; where 2 is a quadratic non-residue of 37.
\[
\begin{array}{rrccccc}
& & x-4 & x-8 & x-12 & x-16 & x-18 \\
& (-17) & 1 & 1 & 2 & 1 & 2 \\
& (-9) & 2 & 2 & 1 & 1 & 1 \\
& (-6) & 1 & 2 & 2 & 2 & 2 \\
& (0) & 1 & 2 & 1 & 1 & 2 \\
& (-2) & 2 &  1 & 1 & 1 & 2 \\
& (4) & 1 & 1 & 2 & 1 & 2
\end{array}\]
Since none of the images of these points is trivial, none of them is a
torsion point.
Notice that $(-17), (-9), (-6), (0)$ are all independent, and so are
independent points of $J({\bf Q})$ of infinite order.
We have the relations $(-2)=(-9)+(-6)$ and $(4)=(-17)$ in $J({\bf Q}_{37})/
2J({\bf Q}_{37})$.

The prime 73 also splits in $L$ and we have\[
Y^{2}=f\equiv (X+26)(X+19)(X+2)(X-13)(X-18)\; ({\rm mod}\; 73).\]
The following shows the images of the given rational points in
$L_{73}^{\ast}/L_{73}^{\ast 2}$; where 5 is a quadratic non-residue of 73.
\[
\begin{array}{rrccccc}
& & x+26 & x+19 & x+2 & x-13 & x-18 \\
& (-17) & 1 & 1 & 5 & 5 & 1 \\
& (-9) & 5 & 5 & 5 & 5 & 1 \\
& (-6) & 5 & 5 & 1 & 1 & 1 \\
& (0) & 5 & 1 & 1 & 5 & 1 \\
& (-2) & 1 &  5 & 5 & 5 & 5 \\
& (4) & 5 & 1 & 1 & 1 & 5
\end{array}\]
We see that $(-2)$ is independent of $(-17),(-9),(-6),(0)$.
Now if there is a dependence relation of $(4)$ on the others in
$J({\bf Q})/2J({\bf Q})$ then the same relation holds in any
$J({\bf Q}_{p})/2J({\bf Q}_{p})$. Over ${\bf Q}_{37}$ the only relations
involving $(4)$ are $(4)=(-17)$ and $(4)=(-17)+(-2)+(-9)+(-6)$.
Over ${\bf Q}_{73}$ the only relations involving $(4)$ are
$(4)=(-9)+(-2)$ and $(4)=(-17)+(-6)+(-2)$. Since there is no intersection
in relations locally, there is no relation globally. So the 6 points
are independent and $J({\bf Q})$ has rank at least 6. If we could
show that $C^{2}({\bf Q},J)/I^{2}({\bf Q},J)$ had rank 1, then we would
know that $J({\bf Q})$ has rank exactly 6, but this is not the case.
In fact $C^{2}({\bf Q},J)=I^{2}({\bf Q},J)$.
It is a straightforward computation to show that the images of the
four independent points $(-2)+(-6),(-2)+(-9),(-2)+(-17),(0)+(4)$ are
unramified at all primes. So
$C^{2}({\bf Q},J)$ is contained
in $S^{2}({\bf Q},J)$ which has rank 6 or 7, as does the Mordell-Weil
group of $J$ over ${\bf Q}$.

\vspace{.2in}
\noindent
Example III: {\em $C^{2}(K,A)$ is not always contained in $S^{2}(K,A)$}

\vspace{3mm}

In the examples that have been worked out in the literature like the
ones above, the group of unramified homomorphisms is always contained
in the Selmer group; see \cite{Fr,Qu,Wa} and \cite{Si},
p.\ 321, 10.9(e).  This is not always the case, however.  Let $L$ be a
cyclic cubic extension of ${\bf Q}$ with class number 4; such fields
exist, like the one with conductor 163. Now $C^{2}({\bf Q},E)$ is
$\Hom_{\Gal (L/{\bf Q})}(\Gal (H(L)/L),E[2])$ where $H(L)$ is the
Hilbert class field of $L$. Since $H^{2}(\Gal (L/{\bf Q}), E[2])=0$, there
is only one possible Galois group $\Gal (H(L)/{\bf Q})$.  Since $A_{4}$
contains $V_{4}$ as a subgroup and the quotient acts on $V_{4}$ as
$\Gal (L/{\bf Q})$ acts on $E[2]$, we know that $A_{4}$ must be that group.
Let $\Gr$ be a prime of $H(L)$ whose restriction to ${\bf Q}$ is $p$,
an odd prime, and assume that $\Gr$ is an unramified extension of $p$
with a decomposition group of order two. By Tchebotarov's density
theorem, there are infinitely many such primes. Choose a polynomial
$f\in {\bf Z}[X]$ whose roots generate $L$ and assume $p$ does not
divide the discriminant of $f$. Let the following be the
factorization of $f$:
\[
f=(X-\alpha_{1})(X-\alpha_{2})(X-\alpha_{3})
\]
and define $g$ in the following way:
\[
g=(X-p\alpha_{1})(X-p\alpha_{2})(X-p\alpha_{3}).
\]
Let $E$ be the elliptic curve defined by the equation
$Y^{2}=g$. The points $T_{i}=(p\alpha_{i},0)$ are the non-trivial 2-torsion
points.
There are four $\Gal (L/{\bf Q})$-invariant homomorphisms from $\Gal (H(L)/L)$
to
$E[2]$; these are the four elements of $C^{2}({\bf Q},E)$.
They restrict onto the four unramified homomorphisms in
$\Hom(\Gal (\overline{\bf Q}_{p}/{\bf Q}_{p}),E[2])$.
The 2-torsion points are representatives for
$E({\bf Q}_{p})/2E({\bf Q}_{p})$. These points are mapped
to the homomorphisms that send an element $\psi \in \Gal
(\overline{\bf Q}_{p}/{\bf Q}_{p})$ to $\psi \frac{1}{2}T_{i}-\frac{1}{2}T_{i}$
where $\frac{1}{2}T_{i}$ is a 4-torsion point. The coordinates of the
4-torsion points lying over the point $(p\alpha_{1} ,0)$ are
\[
(p\alpha_{1} \pm p\sqrt{(\alpha_{1}-\alpha_{2})(\alpha_{1}-\alpha_{3})},
\pm p\sqrt{(\alpha_{1}-\alpha_{2})(\alpha_{1}-\alpha_{3})}
(\sqrt{p(\alpha_{1}-\alpha_{2})}\pm \sqrt{p(\alpha_{1}-\alpha_{3})})
\]
where the first and third $\pm$ must agree. From the $Y$-coordinates, it
is clear that the 4-torsion
points are defined over a ramified extension of ${\bf Q}_{p}$. By symmetry,
all three non-trivial 2-torsion points will map to ramified homomorphisms.
So the sizes of $S^{2}({\bf Q}_{p},E)$ and $C^{2}({\bf Q}_{p},E)$ are each 4
and they have trivial intersection.
One could also show this by noticing that $E$ has type $I_{0}^{\ast}$
reduction over ${\bf Q}_{p}$ with $E({\bf Q}_{p})/2E({\bf Q}_{p})\cong
{\bf Z}/2{\bf Z}\times {\bf Z}/2{\bf Z}$ and using theorem~\ref{thm:cases}.
Since
the group $C^{2}({\bf Q},E)$ maps onto the group $C^{2}({\bf Q}_{p},E)$,
which has trivial intersection with
the group $S^{2}({\bf Q}_{p},E)$,
the group $C^{2}({\bf Q},E)$ cannot
be contained in the 2-Selmer group.

\pagebreak

\pagebreak

\begin{center}
Edward F. Schaefer \\
Santa Clara University \\
Department of Mathematics\\
Santa Clara, CA  95053 \\
U.S.A. \\
eschaefer@scuacc.scu.edu
\end{center}

\end{document}